\documentclass[11pt, notitlepage]{article}
\usepackage{amssymb,amsmath,comment}
\usepackage{amsthm}
\allowdisplaybreaks
\catcode`\@=11 \@addtoreset{equation}{section}
\def\thesection{\arabic{section}}

\def\theequation{\thesection.\arabic{equation}}
\catcode`\@=12
\usepackage{colortbl}
\usepackage{hyperref}
\usepackage[mathscr]{eucal}
\usepackage{epsf}
\usepackage{esint}
\usepackage{a4wide}
\def\R{\mathbb{R}}

\DeclareMathOperator*{\esssup}{ess\,sup}
\DeclareMathOperator*{\essinf}{ess\,inf}

\newcommand{\la} {\lambda}

\newcommand{\noi} {\noindent}
\newcommand{\na} {\nabla}

\newcommand{\mb} {\mathbb}

\newcommand{\Tail} {\mathrm{Tail}}

\newcommand{\supp}{\mathrm{supp}}
\newcommand{\osc}{\mathrm{osc}}

\usepackage[all]{xy}
\catcode`\@=11
\def\theequation{\@arabic{\c@section}.\@arabic{\c@equation}}
\catcode`\@=12

\newtheorem{Theorem}{Theorem}[section]
\newtheorem{Lemma}[Theorem]{Lemma}
\newtheorem{prop}[Theorem]{Proposition}
\newtheorem{Corollary}[Theorem]{Corollary}
\newtheorem{Remark}[Theorem]{Remark}
\newtheorem{Definition}[Theorem]{Definition}

\begin{document}

{\vspace{0.01in}}

\title{On the regularity theory for mixed local and nonlocal weighted quasilinear elliptic equations}

\author{Sanjit Biswas\footnote{Department of Mathematical Sciences, Indian Institute of Science Education and Research, Berhampur, 760010, Odisha, India, Email: sanjitbiswas410@gmail.com } \,\,and Prashanta Garain\footnote{ Department of Mathematical Sciences, Indian Institute of Science Education and Research, Berhampur, 760010, Odisha, India, Email: pgarain92@gmail.com}}

\date{}
\maketitle

\begin{abstract}\noindent
We investigate a broad class of mixed local and nonlocal degenerate $p$-Laplace equations with general right-hand sides. The degeneracy is governed by Muckenhoupt $A_p$-weights, yielding a highly nonuniform elliptic framework in which both the local and nonlocal operators may degenerate simultaneously. We establish a comprehensive local regularity theory, including local boundedness and lower semicontinuity of weak subsolutions, weak Harnack inequalities for weak supersolutions, Harnack inequalities, and local H\"older continuity of weak solutions. Our approach combines weighted analytic techniques with the De Giorgi--Nash--Moser iteration method, adapted to the mixed local--nonlocal setting. To the best of our knowledge, this is the first systematic regularity theory for mixed local and nonlocal equations with Muckenhoupt weights. In particular, our results are new even for homogeneous linear equations ($p=2$) under the natural assumption $w\in A_2$, and therefore substantially extend the existing regularity theory for mixed local--nonlocal equations to a degenerate weighted framework with general right-hand sides.
\end{abstract}

\maketitle

\noi {Keywords: Mixed local and nonlocal weighted $p$-Laplace equations, Muckenhoupt weights, De Giorgi-Nash-Moser theory, local boundedness, Harnack inequality, H\"older continuity, semicontinuity.}

\noi{\textit{2020 Mathematics Subject Classification: 35B65, 35J70, 35J92, 35R11.}

\bigskip

\tableofcontents

\section{Introduction and main results}
\subsection{Introduction}
The interaction between local and nonlocal diffusion has attracted considerable attention in recent years owing to its numerous applications in nonlinear analysis, probability theory, continuum mechanics, phase transitions, anomalous diffusion, image processing, and material science, see \cite{Hitchhiker'sguide, Valdinoci} and the references therein. Mixed local--nonlocal equations combine the classical diffusion generated by local differential operators with long-range interactions described by nonlocal operators, thereby providing a more realistic mathematical framework for modeling several physical, biological, and engineering phenomena. Although the regularity theory for purely local and purely nonlocal equations has witnessed remarkable developments over the past decades, the corresponding theory for mixed equations remains comparatively less understood. The analysis becomes significantly more challenging in the presence of Muckenhoupt weights, where the interaction between the local and nonlocal operators occurs in a highly nonhomogeneous geometric setting.

The primary objective of the present paper is to develop a comprehensive local regularity theory for a broad class of mixed local and nonlocal weighted quasilinear equations. More precisely, we establish local boundedness of weak subsolutions, lower semicontinuity, weak Harnack inequalities for weak supersolutions, Harnack inequalities, and local H\"older continuity of weak solutions to the mixed local and nonlocal weighted equation
\begin{align}\label{ME}
    \mathcal{M}_\alpha u=\mathcal{F}(x,u)\quad \text{in }\Omega,
\end{align}
where $\Omega\subset\mathbb{R}^n$ is a bounded domain with $n\ge2$. The operator $\mathcal{M}_\alpha$ and the nonlinearity $\mathcal{F}$ are explicitly defined below in \eqref{mo} and \eqref{F}, respectively.

Our motivation stems from the fact that, unlike the purely local and purely nonlocal settings, a regularity theory for mixed local and nonlocal weighted equations is essentially absent from the existing literature. To the best of our knowledge, even in the linear case $p=2$, under the natural assumption $w\in A_2$ (the Muckenhoupt weights), and in the homogeneous case $\mathcal{F}\equiv0$, the fundamental qualitative properties of weak solutions, such as local boundedness, weak Harnack inequalities, Harnack inequalities, and local H\"older continuity, have not yet been established. Consequently, the results obtained in the present paper are new even for mixed weighted linear equations and provide, to the best of our knowledge, the first systematic regularity theory for this class of problems.

Unless otherwise mentioned, throughout the rest of the paper, the nonlinearity $\mathcal{F}:\Omega\times\mathbb{R}\to\mathbb{R}$ is a measurable function assumed to satisfy the natural growth condition
\begin{equation}\label{F}
|\mathcal{F}(x,\xi)|
\leq |f(x)|+|g(x)||\xi|^{p-1},
\end{equation}
for almost every $(x,\xi)\in\Omega\times\mathbb{R}$, where $f$ and $g$ are real-valued measurable functions satisfying
\[
\frac{f}{w},\,\frac{g}{w}\in L_{\mathrm{loc}}^{q}(\Omega,w),
\]
with $1<p<\infty$, $w\in A_p$ (the Muckenhoupt weights defined below in section 2), and $q>\kappa'$ (given below in \eqref{exponent}) together with the condition
\begin{align}\label{conditiong}
\sup_{B_r(x_0)\Subset\Omega}
r^p
\left(
\fint_{B_r(x_0)}
\left(\frac{|g|}{w}\right)^q
w\,dx
\right)^{\frac{1}{q}}
\leq M,
\end{align}
for some constant $M>0$.

For $\alpha\geq0$, $0<s<1<p<\infty$, and $w\in A_p$, the mixed local and nonlocal weighted $p$-Laplace operator $\mathcal{M}_\alpha$ is defined by
\begin{equation}\label{mo}
\mathcal{M}_\alpha u
:=
\operatorname{div}\big(\mathcal{A}(x,\nabla u)\big)
+\alpha(-\Delta_{p,w})^{s}u,
\end{equation}
where the local part is governed by a weighted quasilinear operator satisfying the standard homogeneity, $p$-growth, and ellipticity assumptions, while the nonlocal part is represented by the weighted fractional $p$-Laplacian. To be more precise, $\mathcal{A}:\Omega\times\mathbb{R}^n\to \mathbb{R}^n$ is a Carath\'eodory function, which satisfies the following hypothesis for almost every $x\in\Omega$ and for every $\xi\in\mathbb{R}^n$:
\begin{enumerate}
\item[(A1)] $\mathcal{A}(x,c\,\xi)=|c|^{p-2}c\,\mathcal{A}(x,\xi),\quad\forall c\in\mathbb{R}$,

\item[(A2)] $\mathcal{A}(x,\xi)\cdot \xi\geq C_1|\xi|^p w$,

\item[(A3)] $|\mathcal{A}(x,\xi)|\leq C_2|\xi|^{p-1}w$,
\end{enumerate}
for some positive constants $C_1$ and $C_2$.

For example, the function $\mathcal{A}(x,\xi)=w(x)H(\nabla u)^{p-1}\nabla_{\xi}H(\nabla u)$, with $1<p<\infty$ satisfies the conditions (A1)--(A3) above, where $H$ is the Finsler-Minkowski norm, that is $H:\mb{R}^n\to[0,\infty)$ is $C^1(\mathbb{R}^n\setminus\{0\})$, which is a strictly convex function, such that $H(\xi)=0$ iff $\xi=0$, $H(t\xi)=|t|H(\xi)$ for every $\xi\in\mathbb{R}^n$, $t\in\mathbb{R}$ and there exist constants $c_1,~c_2>0$ such that $c_1|\xi|\leq H(\xi)\leq c_2|\xi|$ for every $\xi\in\mathbb{R}^n$. Here the symbol $\nabla_\xi H(\na u)$ denotes $\left.\nabla_{\xi} H(\xi)\right|_{\xi=\nabla u(x)}$, where $\nabla_{\xi} H(\xi)$ represents the gradient of $H$ with respect to its own variable $\xi$ and $\nabla u(x)$ denotes the gradient of $u$ with respect to the spatial variable $x$. We present some examples now to provide a little more insight into the weighted anisotropic $p$-Laplace operator $H_{p,w}$ defined by
$$
H_{p,w}\,u:=\text{div}(w(x)H(\nabla u)^{p-1}\nabla_{\xi}H(\nabla u)).
$$ 
For further background on the anisotropic $p$-Laplace operator, we refer the reader to \cite{BFKzamp, Xiathesis, MV} and the references therein.

\medskip
\noindent\textbf{Examples.} Let $x=(x_1,x_2,\ldots,x_n)\in\mathbb{R}^n$.

\begin{enumerate}
\item[(i)] For $q>1$, define
\begin{equation}\label{ex11}
F_q(x):=\left(\sum_{i=1}^{n}|x_i|^q\right)^{\frac1q}.
\end{equation}

\item[(ii)] For $\lambda,\mu>0$, define
\begin{equation}\label{ex2}
F_{\lambda,\mu}(x):=\sqrt{\lambda\sqrt{\sum_{i=1}^{n}x_i^{4}}+\mu\sum_{i=1}^{n}x_i^{2}}.
\end{equation}
\end{enumerate}

It follows from \cite{MV} that the functions $F_q,F_{\lambda,\mu}:\mathbb{R}^n\to[0,\infty)$, defined by \eqref{ex11} and \eqref{ex2}, respectively, are Finsler--Minkowski norms.

\begin{Remark}\label{exrmk1}
Let $i=1,2$, and let $\lambda_i,\mu_i>0$ satisfy
\[
\frac{\lambda_1}{\mu_1}\neq\frac{\lambda_2}{\mu_2}.
\]
Then the norms $F_{\lambda_1,\mu_1}$ and $F_{\lambda_2,\mu_2}$, defined by \eqref{ex2}, are nonisometric on $\mathbb{R}^n$; see \cite{MV}.
\end{Remark}

\begin{Remark}\label{exrmk2}
For the choice $H=F_q$ in \eqref{ex11}, the corresponding anisotropic $p$-Laplace operator takes the form
\begin{equation*}
H_{p,w}u=
\sum_{i=1}^{n}\frac{\partial}{\partial x_i}
\left(w(x)
\left(\sum_{k=1}^{n}\left|\frac{\partial u}{\partial x_k}\right|^{q}\right)^{\frac{p-q}{q}}
\left|\frac{\partial u}{\partial x_i}\right|^{q-2}
\frac{\partial u}{\partial x_i}
\right).
\end{equation*}
In particular, the operator $H_{p,w}$ defined above reduces to the weighted $p$-Laplace operator when $q=2$, and to the weighted pseudo $p$-Laplace operator when $q=p$, namely,
\begin{equation*}
H_{p,w}u=
\begin{cases}
\operatorname{div}\bigl(w(x)|\nabla u|^{p-2}\nabla u\bigr),
& \text{if } q=2,\;1<p<\infty,\\[1ex]
\displaystyle\sum_{i=1}^{n}
\frac{\partial}{\partial x_i}
\left(w(x)|u_i|^{p-2}u_i\right),
& \text{if } q=p\in(1,\infty),
\end{cases}
\end{equation*}
where $u_i=\frac{\partial u}{\partial x_i}$ for $i=1,\ldots,n$.
\end{Remark}
The weighted fractional $p$-Laplace operator (see \cite{Ok24}) is denoted by $(-\Delta)^s_{p,w}u$ and defined as
$$(-\Delta)^s_{p,w}u(x):=\text{P.V.}\int_{\R^n}\frac{|u(x)-u(y)|^{p-2}(u(x)-u(y))}{|x-y|^{sp}}K(x,y)\;dy,$$
where P.V. denotes the principal value and the kernel $K:\R^n\times\R^n\to\R$ is a symmetric Lebesgue measurable function such that for almost every $x,y\in\mathbb{R}^n$, we have
\begin{align}\label{kernal}
   \la \frac{w(x)w(y)}{w(B_{x,y})}\leq K(x,y)\leq \Lambda \frac{w(x)w(y)}{w(B_{x,y})},
\end{align}
with $B_{x,y}:=B_{\frac{|x-y|}{2}}(\frac{x+y}{2})$, $w(B_{x,y}):=\int_{B_{x,y}}w(z)\;dz$ and $\la\geq 0,\Lambda>0$ are constants.

Consequently, equation \eqref{ME} generalizes the mixed anisotropic and fractional weighted $p$-Laplace equation
\begin{equation*}
-\operatorname{div}\!\left(w(x)H(\nabla u)^{p-1}\nabla_{\xi}H(\nabla u)\right)
+\alpha(-\Delta_{p,w})^s u
=\mathcal{F}(x,u)
\quad\text{in }\Omega,
\end{equation*}
which, in turn, generalizes the mixed local and nonlocal weighted $p$-Laplace equation
\begin{equation*}
-\operatorname{div}\!\left(w(x)|\nabla u|^{p-2}\nabla u\right)
+\alpha(-\Delta_{p,w})^s u
=\mathcal{F}(x,u)
\quad\text{in }\Omega.
\end{equation*}
Therefore, equation \eqref{ME} encompasses a broad class of local and nonlocal equations arising in nonlinear analysis.

The regularity theory for nonlinear elliptic equations has a long and distinguished history. In the purely local setting, qualitative properties of weak solutions to the $p$-Laplace equation, including local boundedness, Harnack inequalities, H\"older continuity, and higher regularity, are by now classical. We refer to \cite{ Dibe1, PLin1,  Moser, PTolks, Uhl} and the references therein for comprehensive accounts. The weighted theory has also received considerable attention. A pioneering contribution is due to \cite{Fabes82}, where the authors established the Harnack inequality and H\"older continuity for weak solutions of the linear weighted equation
\[
\operatorname{div}(w(x)\nabla u)=0,
\]
under the assumption $w\in A_2$. Their work laid the foundation for the study of degenerate elliptic equations with Muckenhoupt weights. Subsequently, nonlinear weighted equations of the form
\begin{equation*}
\Delta_{p,w}u=0
\quad\text{in }\Omega,
\end{equation*}
and its associated nonhomogeneous version were investigated in \cite{Heinonen, E.W., Wu} and the references therein. Very recently, regularity theory for double phase problems with Muckenhoupt  condition have been studied in \cite{Deining26, Tran1, Tran2}.

The regularity theory for nonlocal equations has undergone remarkable developments over the last two decades. Interior regularity, Harnack inequalities, and H\"older continuity for fractional and integro-differential equations have been established in numerous works; see, for example, \cite{BiswasLip, Lind2, Lind1, DKPhar, DKPahp, Silvestre, Kas07, Kas09, Kas12}. In contrast, the weighted nonlocal theory is still in its infancy. To the best of our knowledge, the only work establishing regularity results for weighted fractional $p$-Laplace equations is \cite{Ok24}. More precisely, the authors proved local boundedness of weak subsolutions, weak Harnack inequalities for weak supersolutions, Harnack inequalities, and local H\"older continuity of weak solutions to
\begin{equation*}
(-\Delta_{p,w})^su=0
\quad\text{in }\Omega,
\end{equation*}
under the assumptions $1<p<\infty$, $0<s<1$, and $w\in A_p$.

Motivated by these developments, the regularity theory for mixed local and nonlocal equations has attracted considerable attention in recent years. Early contributions in the linear setting relied primarily on probabilistic techniques; see, for instance, \cite{Chen, Chen12, Fo}. Subsequently, purely analytical methods have been developed to study both linear mixed equations, see \cite{Cozzisiam, Valdinocicpde, VecchiBO, BVV, Valdinoci} and the references therein. For nonlinear mixed equations, leading to local boundedness, H\"older continuity, weak Harnack inequalities, and Harnack inequalities among other regularity estimates in the unweighted setting can be found in \cite{Cozzijde, Biswas, BTopp, DasBiswas, Iwonajlms, Min, Garainna, Garainjde, GKK, GK, GLcvpde} and the references therein. These works have significantly advanced the understanding of mixed diffusion phenomena.

The preceding discussion naturally raises the question of whether the classical regularity theory can be extended to mixed local and nonlocal equations in the weighted setting. The present paper provides an affirmative answer. The interaction between the local and nonlocal weighted operators gives rise to substantial analytical difficulties that cannot be addressed by a direct combination of the existing theories for the purely local and purely nonlocal settings.

The principal objective of the present paper is to bridge this gap by developing a comprehensive local regularity theory for the mixed weighted equation \eqref{ME}. More precisely, under the general growth assumption \eqref{F} on the right-hand side, we establish local boundedness of weak subsolutions, lower semicontinuity, weak Harnack inequalities for weak supersolutions, Harnack inequalities, and local H\"older continuity of weak solutions. To the best of our knowledge, these results are the first of their kind for mixed local and nonlocal weighted equations. In particular, they are new even in the linear case $p=2$, under the assumption $w\in A_2$, and for the homogeneous equation $\mathcal{F}\equiv0$. Consequently, the present work provides the first systematic regularity theory for mixed local and nonlocal weighted equations and extends, within a unified framework, the existing regularity theories for both purely local and purely nonlocal weighted problems.

\subsection{Main results}
The main results of this article are stated below. The first one is the following local boundedness result for weak subsolutions. The main ingredients are Lemma \ref{WIT}, Lemma \ref{Energy} and the iteration Lemma \ref{seqconv}.
\begin{Theorem}[Local boundedness]\label{Bdd}
Let $\alpha>0$ and $u$ be a weak subsolution of equation \eqref{ME}. Then, for any $\delta\in(0,1]$, there exists a constant $C=C(n,p,s,q,\kappa,\Lambda,\alpha,C_1,C_2,[w]_p,M)>0$ such that
\begin{align*}
    \sup_{B_\frac{r}{2}(x_0)}u&\leq C\delta^{-\frac{p-1}{p}\frac{\kappa q'}{\kappa-q'}}\Big(\fint_{B_r(x_0)}u_+^p\,w\;dx\Big)^\frac{1}{p}+\delta \mathrm{Tail}\Big(u_+;x_0,\frac{r}{2}\Big)\nonumber+r^{p'}\Big(\fint_{B_r(x_0)}\Big(\frac{|f|}{w}\Big)^qw\;dx\Big)^\frac{1}{q(p-1)}
\end{align*}
whenever $B_r(x_0)\Subset\Omega$ with $0<r\leq 1$. Here, $M,\;\kappa$ and $\mathrm{Tail}$ are given by \eqref{conditiong}, \eqref{exponent} and \eqref{tail}, respectively.  
\end{Theorem}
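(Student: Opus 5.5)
We will run a De Giorgi iteration on shrinking balls, as in the purely nonlocal weighted theory of \cite{Ok24}, adding the local gradient term and absorbing the right-hand side through the exponent $q>\kappa'$.

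\textbf{Setup.} Fix $B_r(x_0)\Subset\Omega$ with $r\le1$. For $j\ge0$ set
\[
r_j=\tfrac r2\bigl(1+2^{-j}\bigr),\qquad \tilde r_j=\tfrac{r_j+r_{j+1}}2,\qquad B_j=B_{r_j}(x_0),\qquad \tilde B_j=B_{\tilde r_j}(x_0).
\]
Take levels $k_j=k(1-2^{-j})$, with $k>0$ to be chosen, and write $w_j=(u-k_j)_+$. Choose cutoffs $\psi_j\in C_c^\infty(\tilde B_j)$ with $\psi_j\equiv1$ on $B_{j+1}$ and $|\nabla\psi_j|\lesssim 2^j/r$.

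\textbf{Step 1: energy estimate.} Apply the Caccioppoli-type inequality of Lemma~\ref{Energy} to $w_j$. It bounds
\[
\int_{B_j}|\nabla(w_j\psi_j)|^p w\,dx \;+\; \alpha\,[w_j\psi_j]^p_{W^{s,p}_w}
\]
by four contributions:
\begin{itemize}
\item the local term $\dfrac{2^{jp}}{r^p}\displaystyle\int_{B_j}w_j^p\,w\,dx$;
\item the analogous nonlocal term with factor $2^{jp}r^{-sp}\le 2^{jp}r^{-p}$, using $r\le1$;
\item a tail term of the form $\dfrac{2^{j(n+sp)}}{r^{sp}}\,\dfrac{\mathrm{Tail}(w_0;x_0,r/2)^{p-1}}{k_j^{p-1}}\displaystyle\int_{B_j} w_j^{p}\,w\,dx$, after replacing $w_j$ by $w_j\,(w_j/(k_{j+1}-k_j))^{p-1}$ on the set $\{u>k_{j+1}\}$;
\item the right-hand-side terms $\displaystyle\int(|f|+|g|u^{p-1})\,w_j\psi_j^p\,dx$.
\end{itemize}

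For the right-hand side, on $\{u>k_{j+1}\}$ we use $u\le w_j+k$ and write $|f|=(|f|/w)\,w$. Hölder's inequality with exponent $q$ in $L^q(w)$ gives factors $\|f/w\|_{L^q(B_r,w)}$ and $\|g/w\|_{L^q(B_r,w)}$. These multiply $\|w_j\psi_j\|^p_{L^{pq'}(w)}$ and $k^{p-1}\,|\{u>k_{j+1}\}\cap B_j|_w^{1/q'}$.

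\textbf{Step 2: weighted Sobolev inequality.} Next apply the weighted Sobolev inequality, Lemma~\ref{WIT}, with gain exponent $\kappa>1$. Since $q'<\kappa$, we interpolate $L^{pq'}(w)$ between $L^p(w)$ and $L^{\kappa p}(w)$. The gradient part is absorbed using Young's inequality, and condition~\eqref{conditiong} keeps the constant uniform; this is where $M$ enters. The $f$-contribution is made harmless by choosing
\[
k\ \ge\ r^{p'}\Bigl(\fint_{B_r}(|f|/w)^q\,w\,dx\Bigr)^{1/(q(p-1))},
\]
so it becomes $\lesssim k^p\,|A_j|_w^{1/q'}$-type terms, where $A_j=\{u>k_{j+1}\}\cap B_j$.

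Combining this with Chebyshev's inequality, $|A_j|_w\le (k_{j+1}-k_j)^{-p}\int_{B_j} w_j^p\,w\,dx$, and normalizing
\[
Y_j=\frac1{k^p}\fint_{B_j}w_j^p\,w\,dx ,
\]
we expect a recursion of the form
\[
Y_{j+1}\le C\,b^j\Bigl(1+\delta^{-(p-1)}\Bigr)\,Y_j^{1+\beta},\qquad \beta=\frac1{q'}-\frac1\kappa=\frac{\kappa-q'}{\kappa q'}>0.
\]
Here we also choose $k\ge\delta\,\mathrm{Tail}(u_+;x_0,r/2)$, so that the tail factor $\mathrm{Tail}^{p-1}/k^{p-1}$ is at most $\delta^{1-p}$.

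\textbf{Step 3: iteration.} The iteration Lemma~\ref{seqconv} gives $Y_j\to0$, hence $\sup_{B_{r/2}}u\le k$, provided
\[
Y_0\le C^{-1/\beta}\,\delta^{(p-1)/\beta}\,b^{-1/\beta^2}.
\]
This holds if $k^p\ge C\,\delta^{-(p-1)/\beta}\fint_{B_r}u_+^p\,w\,dx$. Taking $p$-th roots produces exactly the power
\[
\delta^{-\frac{p-1}{p}\cdot\frac{\kappa q'}{\kappa-q'}}
\]
appearing in Theorem~\ref{Bdd}. Finally, let $k$ be the sum of the three quantities identified above (the $f$-term, the tail term, and the $L^p(w)$ term); this gives the stated bound.

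\textbf{Main obstacle.} The main difficulty is Step 1 together with the bookkeeping behind the recursion. The weighted measures $w(B_{x,y})$ in the kernel must be compared with $w(B_r)$ through the doubling property of $A_p$ weights, with constants depending on $[w]_p$, so that the tail produces a factor depending only on $\delta$. At the same time, the $g$-term must be controlled uniformly via~\eqref{conditiong} despite the normalization by averages rather than integrals. I would handle the $g$-term first, using the interpolation above and absorbing it into the left-hand side with a small parameter.
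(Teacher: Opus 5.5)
Your proposal is correct and follows the paper's proof essentially step by step: same radii and levels, Lemma~\ref{Energy} combined with Theorem~\ref{WIT}, H\"older--interpolation--Young on the $f$- and $g$-terms using $q>\kappa'$ and \eqref{conditiong}, the three lower bounds on $k$, and Lemma~\ref{seqconv}; the only real variation is that you drop the intermediate levels $\tilde k_j$ and close with Chebyshev instead of the pointwise bound \eqref{T11}, obtaining $Y_{j+1}\lesssim \delta^{1-p}Y_j^{1+1/q'-1/\kappa}$ in place of the paper's $A_{j+1}\lesssim \delta^{(1-p)\kappa}A_j^{\kappa/q'}$, which yields the same power of $\delta$. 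For the write-up, apply the energy estimate to $(u-k_{j+1})_+$ rather than $(u-k_j)_+$ (otherwise the tail term cannot be converted into $\int (u-k_j)_+^p w\,dx$), use the tail factor $r^{-p}(k_{j+1}-k_j)^{1-p}$ instead of $r^{-sp}k_j^{1-p}$ (which is infinite at $j=0$), and split the cross term via $k^{p-1}t\le t^p+k^p$ before H\"older so that it actually produces $k^p|A_j|_w^{1/q'}$.
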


Our second main result is the following local H\"older continuity for weak solutions.
\begin{Theorem}[Local H\"older continuity]\label{cty}
Let $\alpha>0$ and $u$ be a weak solution of \eqref{ME}, with $\mathcal{F}$ satisfying \eqref{F} such that $f/w\in L^q_{\mathrm{loc}}(\Omega,w)$ with $q>n$ and $g\equiv 0$. Then $u$ is locally H\"{o}lder continuous in $\Omega$. Furthermore, there exist constants $C>0$ and ${\sigma}\in (0,1)$, depending on $n,\;p,\;q,\;s,\;\Lambda,$ and $[w]_p$, such that  
    \begin{align}\label{Holder}
    \mathrm{osc}_{B_\rho(x_0)}\;u&:=\sup_{B_\rho(x_0)} u-\inf_{B_\rho(x_0)}\;u\nonumber\\
    &\leq C\Big(\frac{\rho}{r}\Big)^{{\sigma}}\Big\{\mathrm{Tail}\Big(u;x_0,\frac{r}{2}\Big)+\Big(\fint_{B_{r}(x_0)}|u|^pw\;dx\Big)^\frac{1}{p}+
    r^{p'}\Big(\fint_{B_{r}(x_0)}\Big(\frac{|f|}{w}\Big)^qw\;dx\Big)^\frac{1}{q(p-1)}\Big\},
    \end{align}
    for every $B_{r}(x_0)\Subset\Omega$ with $0<r\leq 1$ and $0<\rho\leq\frac{r}{2}$.
\end{Theorem}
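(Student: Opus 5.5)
We prove the oscillation estimate \eqref{Holder} by a De Giorgi--type oscillation reduction on a geometric sequence of balls, in the spirit of Di Castro--Kuusi--Palatucci adapted to the mixed weighted setting. Throughout we rely on three earlier ingredients: the local boundedness estimate of Theorem \ref{Bdd}, the Caccioppoli-type energy estimate of Lemma \ref{Energy}, and the weighted Sobolev/Poincar\'e inequalities for $A_p$ weights.

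\emph{Setup.} Fix $B_r(x_0)\Subset\Omega$ with $r\le1$ and set $r_j=\sigma_0^j r/2$, where $\sigma_0\in(0,1/4]$ is small and will be fixed later. Write $B_j=B_{r_j}(x_0)$. Define
\[
\omega_0:=2\,\mathrm{Tail}(u;x_0,r/2)+C\Big(\fint_{B_r}|u|^pw\,dx\Big)^{1/p}+C\,r^{p'}\Big(\fint_{B_r}\Big(\frac{|f|}{w}\Big)^qw\,dx\Big)^{\frac{1}{q(p-1)}},
\]
and $\omega_j:=(r_j/r_0)^{\sigma}\omega_0$. Applying Theorem \ref{Bdd} with $\delta=1$ to both $u$ and $-u$ gives $\mathrm{osc}_{B_0}u\le\omega_0$. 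We then prove by induction that
\[
\mathrm{osc}_{B_j}u\le\omega_j \quad\text{for all } j.
\]
This implies \eqref{Holder} after a standard interpolation between consecutive radii.

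\emph{Induction step.} Assume the bound holds up to level $j$. Then either $u-\inf_{B_j}u$ or $\sup_{B_j}u-u$ exceeds $\omega_j/2$ on at most half of $2B_{j+1}$, measured with respect to $w\,dx$. Say this holds for $v=u-\inf_{B_j}u\ge0$ in $B_j$. The proof then proceeds in two steps.

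\emph{(i) Logarithmic estimate.} We prove a log-Caccioppoli estimate for the supersolution $v+d$. Test with $(v+d)^{1-p}\phi^p$ to get
\[
\int_{B_{2\rho}}|\nabla\log(v+d)|^pw\lesssim \frac{w(B_{2\rho})}{\rho^p}\Big(1+d^{1-p}\rho^p\,\mathrm{Tail}(v_-;x_0,4\rho)^{p-1}+d^{1-p}\rho^{p'(p-1)}\|f/w\|\Big).
\]
The nonlocal part is handled exactly as in the purely nonlocal weighted case of \cite{Ok24}. Since $\alpha>0$ and $K\ge0$, the sign-favorable nonlocal term can be discarded wherever that is convenient. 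The local part gives the gradient term through (A2)--(A3) and Young's inequality. The $f$-term is controlled by H\"older's inequality in $L^q(w)$; here $q>n$ is used to ensure $q>\kappa'$ and to produce a positive power of $\rho$. Combining this with the weighted Poincar\'e inequality and the doubling property of $w$ yields the usual measure-decay estimate: for a suitable $d$, the set $\{v\le 2\varepsilon\omega_j\}$ occupies a proportion $\lesssim 1/\log(1/\varepsilon)$ of $2B_{j+1}$ in $w$-measure.

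\emph{(ii) De Giorgi iteration downward.} Apply Lemma \ref{Energy} to $(k_i-v)_+$ with levels $k_i$ decreasing from $2\varepsilon\omega_j$ to $\varepsilon\omega_j$, combined with Lemma \ref{WIT} and Lemma \ref{seqconv}. Using the small measure from step (i), this gives $v\ge\varepsilon\omega_j$ in $B_{j+1}$. Consequently,
\[
\mathrm{osc}_{B_{j+1}}u\le(1-\varepsilon)\omega_j\le\omega_{j+1},
\]
provided $\sigma$ is chosen with $(1-\varepsilon)\le\sigma_0^{\sigma}$.

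\emph{Main obstacle: tail control.} The hard part is controlling $\mathrm{Tail}((v)_-;x_0,r_j)$ and $\mathrm{Tail}(v;\cdot)$ uniformly in $j$. We split the tail integral over the annuli $B_{i-1}\setminus B_i$ for $i\le j$, and on each annulus use the inductive bound $|v|\le\omega_{i-1}+\omega_j$. The weighted kernel factor $w(y)/w(B_{x,y})$, together with the reverse-doubling property of $A_p$ weights, yields a geometric sum of order
\[
\sum_i (r_j/r_i)^{sp}\,(\omega_{i-1}/\omega_j)^{p-1}.
\]
This sum is bounded provided $\sigma(p-1)<sp$, which forces us to take $\sigma$ small. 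The far tail outside $B_0$ is absorbed by $\omega_0$. We also have to verify that the mixed local term does not spoil the scaling: all local terms scale like $\rho^{-p}w(B_\rho)$, which matches the nonlocal factor because $r\le1$. Likewise, the $f$-contribution $r_j^{p'-n/(q(p-1))\cdots}$ decays faster than $\omega_j$ once $\sigma$ is small, so it is absorbed at every step.
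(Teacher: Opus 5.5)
Your proposal is correct and follows the paper's proof essentially step for step. The base case comes from Theorem~\ref{Bdd} with $\delta=1$. Your steps (i)--(ii) are exactly the two steps of the paper's expansion-of-positivity Lemma~\ref{expansionofpositivity}: the logarithmic estimate plus weighted Poincar\'e for the measure decay, then a De Giorgi iteration via Lemma~\ref{seqconv}. The tail is handled by the same annular splitting, and $q>n$ enters through the $A_p$ bound $w(B_r)/w(B_{\rho})\lesssim (r/\rho)^{np}$, so the $f$-term carries the factor $(r_{j+1}/r)^{p'(1-n/q)}$ (not the exponent you wrote).

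The only thing to repair is the dichotomy. State it for $u-\inf_{B_j}u$ and $\omega_j-(u-\inf_{B_j}u)$: both are nonnegative in $B_j$ and sum to $\omega_j$, so one of them is $\geq\omega_j/2$ on at least half of $B_{2r_{j+1}}$ in $w$-measure. With $\sup_{B_j}u-u$ and the threshold $\omega_j/2$, neither alternative need hold, for instance when $\mathrm{osc}_{B_j}u<\omega_j/2$.
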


Our third main results is devoted to the Harnack  inequality for weak solutions.
\begin{Theorem}[Harnack inequality]\label{Harthm}
Let $\alpha>0$ and $u$ be a weak solution of \eqref{ME} such that $u\geq 0$ in $B_R(x_0)\Subset\Omega$, with $\mathcal{F}$ satisfying \eqref{F}, where $f/w,\; g/w \in L^q_{\mathrm{loc}}(\Omega,w)$ with $q>n$ and $g$ satisfying \eqref{conditiong}. Assume, in addition to the standing hypotheses, that the constant $\lambda>0$ in \eqref{kernal}. Then there exists {a} constant $C=C(n,p,s,q,\kappa,{\lambda},\Lambda,\alpha,C_1,C_2,[w]_p,M)>0$ such that 
    \begin{align}
        \sup_{B_\frac{r}{2}(x_0)} u\leq C\Big\{\inf_{B_r(x_0)} u+\Big(\frac{r}{R}\Big)^{p'}\Tail(u_-;x_0,R)+r^{p'}\Big(\fint_{B_{8r}(x_0)}\Big(\frac{|f|}{w}\Big)^q~w\;dx\Big)^\frac{1}{q(p-1)}\Big\},
    \end{align}
    for every $r\in(0,1]$ with $r<\frac{R}{16}$.
\end{Theorem}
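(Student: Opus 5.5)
The plan is to combine the local boundedness estimate of Theorem \ref{Bdd} with a weak Harnack inequality for weak supersolutions, in the standard way for mixed and nonlocal problems (cf.\ \cite{DKPhar, Ok24, GK}).

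\textbf{Step 1 (Tail of $u_+$).} Since $u\geq0$ in $B_R(x_0)$, we have $u_+=u$ there. The first task is a tail estimate of the form
\begin{align*}
\Tail\Big(u_+;x_0,\frac r2\Big)\leq C\sup_{B_r(x_0)}u+C\Big(\frac rR\Big)^{p'}\Tail(u_-;x_0,R)+C r^{p'}\Big(\fint_{B_{r}(x_0)}\Big(\frac{|f|}{w}\Big)^q w\,dx\Big)^{\frac{1}{q(p-1)}}.
\end{align*}
To prove it, test the equation with $\phi^p(u-k)$, where $k=2\sup_{B_r}u$ (or $u\phi^p$ after adding a constant) and $\phi$ is a cutoff supported in $B_r$ with $\phi\equiv1$ on $B_{r/2}$. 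In the resulting identity:
\begin{itemize}
\item the nonlocal term with $y\in \mathbb{R}^n\setminus B_r$ produces $\int_{\R^n\setminus B_r}u_+^{p-1}K\,dy$ with a good sign;
\item the local term and the near-diagonal part are controlled by $\sup_{B_r} u$;
\item the negative part outside $B_R$ yields $\Tail(u_-;x_0,R)$;
\item the region $B_R\setminus B_r$ carries no negative part.
\end{itemize}
The lower bound $\lambda>0$ in \eqref{kernal} is needed exactly here, to convert $\int K$ back into the weighted tail \eqref{tail}. This is why the Harnack statement, unlike Theorem \ref{Bdd}, requires $\lambda>0$. The $f$ and $g$ terms are handled by Hölder's inequality with exponent $q$, the weighted Sobolev inequality (Lemma \ref{WIT}), and \eqref{conditiong}. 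The $g$-term is absorbed using $r\le1$ and the factor $r^p$.

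\textbf{Step 2 (weak Harnack).} Next we need, for some small $t_0>0$,
\begin{align*}
\Big(\fint_{B_{r}}u^{t_0}w\,dx\Big)^{1/t_0}\leq C\inf_{B_r}u+C\Big(\frac rR\Big)^{p'}\Tail(u_-;x_0,R)+Cr^{p'}\Big(\fint_{B_{8r}}\Big(\frac{|f|}{w}\Big)^qw\,dx\Big)^{\frac{1}{q(p-1)}}.
\end{align*}
This is proved by the usual route:
\begin{itemize}
\item a logarithmic estimate for $\log(u+d)$, with $d$ equal to the sum of the tail and data terms;
\item a weighted BMO bound, followed by the John--Nirenberg inequality on the doubling space $(\R^n,w\,dx)$ (available since $w\in A_p$);
\item a crossover to positive powers;
\item Moser iteration for negative powers, using the weighted Sobolev inequality.
\end{itemize}
If this is already proved in the paper as a separate theorem, we simply invoke it.

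\textbf{Step 3 (combination).} Apply Theorem \ref{Bdd} on $B_r$ and insert the tail bound of Step 1 into the $\delta\,\Tail(u_+;x_0,r/2)$ term. Choosing $\delta$ small absorbs $C\delta\sup_{B_r}u$; a standard covering and iteration lemma handles the mismatch of radii $r/2$ versus $r$ (apply the estimate on balls $B_\rho\subset B_{\rho'}$ and use Lemma \ref{seqconv}-type absorption). Then interpolate $\big(\fint_{B_r} u^p w\big)^{1/p}\le \epsilon\sup_{B_r}u+C_\epsilon\big(\fint_{B_r} u^{t_0}w\big)^{1/t_0}$ and absorb again. Finally, bound the $L^{t_0}$ average using Step 2. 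Doubling of $w$ lets us pass between $B_r$, $B_{2r}$ and $B_{8r}$, which explains the $B_{8r}$ in the data term and the restriction $r<R/16$.

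\textbf{Main obstacle.} I expect Step 1 to be the hardest. The difficulty is matching the weighted kernel $w(x)w(y)/w(B_{x,y})$ with the weighted tail and averages, which requires comparing $w(B_{x,y})$ with $w(B_{|y-x_0|}(x_0))$ for $x\in B_r$ and $y$ far away. I would first try to do this via doubling and the $A_p$ property.
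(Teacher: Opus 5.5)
Your proposal is correct and follows the paper's proof. The paper combines Theorem~\ref{Bdd} with the tail estimate of Lemma~\ref{Tailestimate}, which is where $\lambda>0$ enters, exactly as you say. It then runs a covering argument on balls $B_{(\sigma_2-\sigma_1)r}(y)$, lowers the exponent by Young's inequality, and absorbs via the iteration Lemma~\ref{iterativelem}; note that this, not Lemma~\ref{seqconv}, is the right absorption tool. It concludes with the small-exponent weak Harnack inequality of Lemma~\ref{WHI0}, which the paper proves via expansion of positivity (Lemma~\ref{expansionofpositivity}) and the Krylov--Safonov covering Lemma~\ref{covering} rather than your John--Nirenberg/negative-power Moser route; since it is available, you can simply invoke it, as you planned.
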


Our fourth main result is concerning the weak Harnack inequality for weak solutions.
\begin{Theorem}[Weak Harnack inequality]\label{wkHarnack}
Let $\alpha>0$ and $u$ be a weak supersolution of equation (\ref{ME}) such that $u\geq 0$ in $B_R(x_0)\Subset\Omega$, with $\mathcal{F}$ satisfying \eqref{F} such that $f/w,\; g/w \in L^q_{\mathrm{loc}}(\Omega,w)$ with $q>n$ and $g$ satisfying \eqref{conditiong}.  Then for every $0<l<\kappa(p-1)$, there exists a constant $C=C(n,p,q,s,\Lambda,\alpha,C_1,C_2,l,[w]_p,M)>0$, such that
\begin{align}\label{FWH}
    \Big(\fint_{B_\frac{r}{2}(x_0)}u^{l}w\;dx\Big)^\frac{1}{l}\leq C\Big\{\inf_{B_\frac{r}{2}(x_0)}u+\Big(\frac{r}{R}\Big)^{p'}\Tail(u_-;x_0,R)+r^{p'}\Big(\fint_{B_{8r}(x_0)}\Big(\frac{|f|}{w}\Big)^qw\;dx\Big)^\frac{1}{q(p-1)}\Big\},
\end{align}
 for every $r\in(0,1]$ with $r<\frac{R}{16}$.
\end{Theorem}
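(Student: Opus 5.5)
We follow the Moser-type scheme adapted to mixed local--nonlocal problems, as in the unweighted case (Garain--Kinnunen) and the weighted fractional case (\cite{Ok24}), working throughout with the doubling measure $d\mu=w\,dx$. By the $A_p$ doubling property, it suffices to treat the balls $B_{r/2}(x_0)\subset B_r(x_0)$ up to standard covering arguments.

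\emph{Normalization.} Set
\[
d:=\Big(\frac{r}{R}\Big)^{p'}\Tail(u_-;x_0,R)+r^{p'}\Big(\fint_{B_{8r}(x_0)}\Big(\frac{|f|}{w}\Big)^qw\,dx\Big)^{\frac{1}{q(p-1)}}+\varepsilon
\]
and $\bar u:=u+d>0$ in $B_R(x_0)$; at the end we let $\varepsilon\to0$. Because $u\ge0$ only in $B_R$, the nonlocal term produces a tail of $u_-$, which is why $d$ contains that tail. The $f$-term is absorbed into $d^{p-1}$: on $B_{8r}$ we have $|f|/d^{p-1}\le r^{-p}\,|f|$ in the right normalized $L^q(w)$ sense. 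The $g$-term, $|g|\,\bar u^{p-1}$, is handled as a potential term whose size is controlled by \eqref{conditiong}.

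\emph{Step 1 (logarithmic estimate and crossover).} Test the supersolution inequality with $\bar u^{1-p}\phi^p$. Using (A1)--(A3) and the kernel bounds \eqref{kernal} (only the upper bound $\Lambda$ is needed), we obtain
\[
\int_{B_{2\rho}}|\nabla\log\bar u|^p\phi^p w\,dx\le C\rho^{-p}w(B_{2\rho}),
\]
where the nonlocal contribution is bounded by the tail normalization. The weighted Poincaré inequality then gives $\log\bar u\in BMO(B_r,\mu)$. The John--Nirenberg lemma on the doubling space $(B_r,\mu)$ yields some $l_0>0$ with
\[
\fint \bar u^{l_0}w\,dx\;\fint \bar u^{-l_0}w\,dx\le C .
\]

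\emph{Step 2 (negative powers).} Test with $\bar u^{-\beta}\phi^p$, $\beta>p-1$, to get reverse Hölder inequalities for $\bar u^{-1}$. For the nonlocal part, use the algebraic inequalities from the fractional theory, with the positive remainder discarded and the tail absorbed into $d$. For the $g$-term, use Hölder in $L^{q}(w)$ together with the weighted Sobolev inequality with gain $\kappa$ (Lemma \ref{WIT}); since $q>\kappa'$, the exponent interpolation $\|v\|_{L^{pq'}(w)}\le \epsilon\|v\|_{L^{p\kappa}}+C_\epsilon\|v\|_{L^p}$ absorbs this term. Moser iteration over shrinking balls then gives
\[
\Big(\fint_{B_r}\bar u^{-l_0}w\,dx\Big)^{-1/l_0}\le C\inf_{B_{r/2}}\bar u .
\]

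\emph{Step 3 (positive powers below $\kappa(p-1)$).} Test with $\bar u^{-\beta}\phi^p$, $0<\beta<p-1$. The energy estimate for $\bar u^{(p-1-\beta)/p}$, combined with Sobolev gain $\kappa$, yields finitely many reverse Hölder steps climbing from $l_0$ up to any $l<\kappa(p-1)$; only finitely many steps are needed, so $\beta$ stays bounded away from $p-1$. Chaining Steps 1--3 with the crossover gives \eqref{FWH} for $\bar u$, and since $u\le\bar u$ and $\inf\bar u=\inf u+d$, this gives \eqref{FWH}.

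The main obstacle is Step 1 together with the nonlocal terms. The kernel normalization $w(x)w(y)/w(B_{x,y})$ forces far-off integrals to be estimated through $A_p$ doubling comparisons of $w(B_{x,y})$ with $w(B_\rho)$, so that they reduce exactly to $\rho^{-p}\Tail^{p-1}$ weighted by $w(B_\rho)$. I would first establish this estimate as a separate lemma, treating the region $|y-x_0|\ge R$ and the region $B_R\setminus B_{2\rho}$, where $\bar u\ge d>0$, separately.
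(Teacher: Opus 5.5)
Your proposal is correct in outline, but it follows the classical Moser--Trudinger route rather than the paper's. The paper uses neither John--Nirenberg nor negative powers. It feeds the logarithmic estimate (Lemma~\ref{logestlem}) into a De Giorgi iteration to get expansion of positivity (Lemma~\ref{expansionofpositivity}). It then applies the Krylov--Safonov covering lemma (Lemma~\ref{covering}) to the level sets $A^i_t$, which gives power decay in $t$ of $w(\{u>t\}\cap B_r(x_0))/w(B_r(x_0))$ and hence, via Cavalieri's principle, a weak Harnack inequality with some small exponent $p_0$ (Lemma~\ref{WHI0}). Only your Step~3 coincides with the paper: finitely many Moser steps via Lemma~\ref{EWH} and Theorem~\ref{WIT}, climbing from $p_0$ to $l<\kappa(p-1)$.

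\textbf{Comparison.} The paper's route is economical in context, since expansion of positivity is needed anyway for Theorem~\ref{cty}. It also requires energy estimates only for $(k-u)_+$ and for $(u+d)^{1-\eta}\psi^p$ with $1<\eta<p$. Your route dispenses with the covering lemma and gives the infimum bound directly. It does, however, require two things the paper never needs.
\begin{itemize}
\item You need an analogue of Lemma~\ref{EWH} for $\varphi=\bar u^{-\beta}\phi^p$ with $\beta>p-1$. This includes the nonlocal algebraic inequality for negative exponents, with constants growing only polynomially in $\beta$ so that the infinite iteration converges.
\item You need the logarithmic estimate uniformly on all small balls $B_\rho(y)\subset B_{2r}(x_0)$ to obtain $\log\bar u\in BMO$. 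There the $f$-term must satisfy
$d^{1-p}\rho^p\big(\fint_{B_{2\rho}(y)}(|f|/w)^qw\,dx\big)^{1/q}\le C(\rho/r)^{p(1-n/q)}$.
This follows from Lemma~\ref{comlem}, and it is exactly where $q>n$ enters, with the same scaling as in \eqref{d'}. Your phrase ``in the right normalized $L^q(w)$ sense'' should be made precise at this point.
\end{itemize}

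\textbf{Correction to Step~3.} The constants degenerate as $\beta\to0$, not as $\beta\to p-1$. The coercive local term carries the factor $\beta$; compare the factor $(\eta-1)^{-p}$ in Lemma~\ref{EWH}, where $\eta=1+\beta$. What the finitely many steps must guarantee is $\beta\ge p-1-l/\kappa>0$, and this is precisely the origin of the restriction $l<\kappa(p-1)$. Keeping the starting exponent $p-1-\beta$ bounded below is the easy endpoint, handled by starting at a fixed exponent comparable to $l_0$.
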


Finally, we have the following semicontinuity result.
\begin{Theorem}[Lower semicontinuity]\label{lscthm}
    {Suppose $\mathcal{F}$ satisfies (\ref{F}) where ${f}/{w},{g}/{w}\in L^q_{\mathrm{loc}}(\Omega,w)$ with $q>\kappa'$, and $f,g$ satisfy condition (\ref{conditiong}).} Let $\alpha>0$ and $u$ be a weak supersolution of \eqref{ME}, which is bounded below in $\R^n$. Then $u$ is lower semicontinuous in $\Omega$. {Here, $\kappa$ is given in (\ref{exponent}).}
\end{Theorem}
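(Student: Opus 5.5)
The statement to prove is Theorem \ref{lscthm}, lower semicontinuity of weak supersolutions. The plan is the classical route of Kuusi and of Korvenpää--Kuusi--Lindgren. We show that every Lebesgue-type point of $u$ with respect to $w\,dx$ satisfies
\[
u(x_0)=\operatorname*{ess\,lim\,inf}_{x\to x_0}u(x),
\]
and that the right-hand side defines a lower semicontinuous representative. The key tool is a quantitative ``measure to pointwise'' estimate for supersolutions. Since $-u$ is a weak subsolution of an equation of the same type, the local boundedness result, Theorem \ref{Bdd}, applies to $v=(\mu-u)$ for any constant $\mu$. Here we use that $\mathcal{F}$ is transformed into $\tilde{\mathcal F}(x,\xi)=-\mathcal F(x,\mu-\xi)$. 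Its growth bound involves $|f|+|g||\mu|^{p-1}+|g||\xi|^{p-1}$, and \eqref{conditiong} for both $f$ and $g$ keeps the constants uniform. The hypothesis that $u$ is bounded below in $\R^n$ is what makes the tail of $(\mu-u)_+$ finite and controllable. Replacing $u$ by $u-\inf u$, we may take $u\ge0$ after the corresponding modification of $\mathcal F$, which is again admissible.

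Fix $x_0\in\Omega$ and $r\le1$ with $B_r(x_0)\Subset\Omega$, and set $m(x_0,r)=\operatorname*{ess\,inf}_{B_r(x_0)}u$. Apply Theorem \ref{Bdd} to $(m(x_0,r)-u)_+$, which is a subsolution in $B_r(x_0)$. This gives
\begin{align*}
m(x_0,r/2)\ \ge\ m(x_0,r)-{}&C\delta^{-\gamma}\Big(\fint_{B_r(x_0)}(m(x_0,r)-u)_+^p w\,dx\Big)^{1/p}\\
&-\delta\,\Tail\big((m(x_0,r)-u)_+;x_0,r/2\big)-C r^{p'}\|\cdot\|_{f,g},
\end{align*}
where the sign is reversed appropriately. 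More usefully, we get the same bound with $m(x_0,r)$ replaced by $u(x_0)$. That is, for a point $x_0$ satisfying
\[
\lim_{r\to0}\fint_{B_r(x_0)}|u-u(x_0)|^p w\,dx=0
\]
(true $w$-a.e., hence Lebesgue-a.e., since $w\in A_p$ is doubling and $u\in L^p_{\rm loc}(w)$), we apply Theorem \ref{Bdd} to $(u(x_0)-u)_+$ on $B_r(x_0)$. This yields
\[
u(x_0)-\operatorname*{ess\,inf}_{B_{r/2}(x_0)}u\le C\delta^{-\gamma}\Big(\fint_{B_r}(u(x_0)-u)_+^pw\Big)^{1/p}+\delta\,\Tail\big((u(x_0)-u)_+;x_0,r/2\big)+o(1).
\]
The $f$- and $g$-terms are $o(1)$ because $r^{p'}$ times the normalized $L^q(w)$ average tends to zero. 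This uses $q>\kappa'$ and the doubling/reverse-Hölder scaling of $w(B_r)$, or directly \eqref{conditiong} with the extra factor $r^{p'}$ against $r^{p}$.

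The tail is bounded uniformly in $r$. Since $u\ge0$, we have $(u(x_0)-u)_+\le u(x_0)$, so the tail is bounded by a constant times $u(x_0)$. The nonlocal weighted tail with kernel normalization $w(B_{x,y})$ has bounded total mass outside $B_{r/2}$ by doubling. Letting $r\to0$ first and then $\delta\to0$ gives
\[
u(x_0)\le\operatorname*{ess\,lim\,inf}_{x\to x_0}u(x).
\]
The reverse inequality is immediate from the Lebesgue point property, since the mean of $u$ on $B_r(x_0)$ is at least the essential infimum over $B_r(x_0)$. Hence $u=u_*$ a.e., where
\[
u_*(x):=\operatorname*{ess\,lim\,inf}_{y\to x}u(y).
\]
Finally, $u_*$ is lower semicontinuous by a standard argument: $\operatorname{ess\,inf}_{B_\rho(x)}u$ is monotone in $\rho$, and for $y\in B_{\rho/2}(x)$ we have $B_{\rho/2}(y)\subset B_\rho(x)$.

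The main obstacle is verifying that the reflected and shifted function is a genuine weak subsolution to which Theorem \ref{Bdd} applies with constants independent of the shift $u(x_0)$. The right-hand side picks up the term $|g||u(x_0)|^{p-1}$, which must be absorbed into an ``$f$''-type term. This is handled by \eqref{conditiong} for $f$ and $g$ together with the factor $r^{p'}$, which makes its contribution vanish as $r\to0$ with a constant depending on $u(x_0)$. That dependence is harmless for a pointwise limit.
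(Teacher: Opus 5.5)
Your route is genuinely different from the paper's and it works, but the step you flag as delicate needs a different justification from the one you give.

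The claim that $r^{p'}\big(\fint_{B_r(x_0)}(|\tilde f|/w)^q w\,dx\big)^{1/(q(p-1))}\to0$ cannot come from \eqref{conditiong}. Since $r^{p'}X^{1/(p-1)}=(r^pX)^{1/(p-1)}$, that condition only gives the scale-invariant bound $M^{1/(p-1)}$, with no extra decay. Plain doubling is not enough either. The $A_p$ bound $w(B_r)\gtrsim (r/R)^{np}w(B_R)$ from Lemma \ref{comlem} yields decay $r^{p(1-n/q)}$, hence needs $q>n$, whereas only $q>\kappa'$ with $\kappa'<n$ is assumed.

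What does work is to test Theorem \ref{WIT} (with $k=\kappa$) on a cutoff equal to $1$ on $B_{\rho/2}(x_0)$, supported in $B_\rho(x_0)\subset B_R(x_0)$. Combined with doubling, this gives $w(B_\rho(x_0))\ge c(\rho/R)^{p\kappa'}w(B_R(x_0))$, and therefore
\[
r^p\Big(\fint_{B_r(x_0)}\Big(\frac{|\tilde f|}{w}\Big)^q w\,dx\Big)^{\frac1q}\le C(R)\,r^{p(1-\frac{\kappa'}{q})}\Big\|\frac{\tilde f}{w}\Big\|_{L^q(B_r(x_0),w)}\longrightarrow 0 .
\]
With this in hand, apply Theorem \ref{Bdd} to $u(x_0)-u$ at $L^p(w)$-Lebesgue points. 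This is a subsolution with $\tilde f=|f|+c_p|g||u(x_0)|^{p-1}$ and $\tilde g=c_p|g|$, so the constant depends only on $c_pM$. That closes your argument. Moreover, the tail term is $O\big((u(x_0)-\inf_{\R^n}u)\,r^{(1-s)p/(p-1)}\big)$, so you do not even need $\delta\to0$.

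The paper proceeds differently. It proves a De Giorgi-type lemma (Lemma \ref{semlem}) directly from the energy estimate, using the truncations $(k_i-u)_+$ with a fixed level gap $L$. Smallness there comes from the density hypothesis $w(\{u\le \mu_1+L\}\cap B_r)\le\sigma w(B_r)$, with $\sigma$ independent of $r$. This is then fed into the abstract Liao-type Proposition \ref{lowersemi} at $L^1(w)$-Lebesgue points. In that scheme the forcing term only has to stay bounded under scaling, which is exactly what \eqref{conditiong} for $f$ supplies, and no shift by the constant $u(x_0)$ is required.

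Your Korvenpää--Kuusi--Lindgren-style argument is shorter because it reuses Theorem \ref{Bdd} wholesale. It also shows that \eqref{conditiong} is needed only for $g$. The price is that you need actual decay of the scaled forcing term, which rests on the reverse-doubling observation above rather than on mere boundedness.
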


\begin{Corollary}[Upper semicontinuity]\label{upper}
    {Suppose $\mathcal{F}$ satisfies (\ref{F}) where ${f}/{w},{g}/{w}\in L^q_{\mathrm{loc}}(\Omega,w)$ with $q>\kappa'$, and $f,g$ satisfy condition (\ref{conditiong}).} Let $\alpha>0$ and $u$ be a weak subsolution of \eqref{ME}, which is bounded above in $\R^n$. Then $u$ is upper semicontinuous in $\Omega$.
\end{Corollary}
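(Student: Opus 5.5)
The plan is to reduce Corollary \ref{upper} to Theorem \ref{lscthm} by the sign change $v:=-u$. First I check that $v$ is a weak supersolution of an equation of the same class.

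\textbf{The local part.} Set
\[
\tilde{\mathcal{A}}(x,\xi):=-\mathcal{A}(x,-\xi).
\]
By (A1), $\tilde{\mathcal{A}}(x,\xi)=\mathcal{A}(x,\xi)$. Hence $\mathcal{A}(x,\nabla u)=-\mathcal{A}(x,\nabla v)$, and conditions (A1)--(A3) are unchanged.

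\textbf{The nonlocal part.} The map $t\mapsto|t|^{p-2}t$ is odd, and the kernel $K$ is unaffected by the substitution. Therefore
\[
|u(x)-u(y)|^{p-2}(u(x)-u(y))=-|v(x)-v(y)|^{p-2}(v(x)-v(y)).
\]

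\textbf{The weak formulation.} Take any nonnegative test function $\phi$ admissible in the definition of weak (sub/super)solution. The subsolution inequality for $u$ (the bilinear form is at most $\int\mathcal{F}(x,u)\phi$) becomes, after multiplying by $-1$, the statement that the form for $v$ is at least $\int\tilde{\mathcal{F}}(x,v)\phi$. Here
\[
\tilde{\mathcal{F}}(x,\xi):=-\mathcal{F}(x,-\xi).
\]
So $v$ is a weak supersolution of $\mathcal{M}_\alpha v=\tilde{\mathcal{F}}(x,v)$.

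\textbf{The growth condition.} We have
\[
|\tilde{\mathcal{F}}(x,\xi)|\le|f(x)|+|g(x)||\xi|^{p-1}
\]
with the same $f,g$. Thus \eqref{F} holds, $f/w,g/w\in L^q_{\mathrm{loc}}(\Omega,w)$ with $q>\kappa'$, and \eqref{conditiong} is unchanged.

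\textbf{Function spaces.} The weighted Sobolev spaces and the tail space are symmetric under $u\mapsto-u$, so $v$ lies in the same function class as $u$. Since $u$ is bounded above in $\R^n$, $v$ is bounded below in $\R^n$.

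\textbf{Conclusion.} Theorem \ref{lscthm} now applies to $v$ and gives that $v$ is lower semicontinuous in $\Omega$ (in the sense of its representative, e.g. the essential liminf). Hence $u=-v$ is upper semicontinuous, with the corresponding representative $u(x)=\operatorname{ess\,limsup}_{y\to x}u(y)$.

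The only point needing care is this last one: the representative for which Theorem \ref{lscthm} is asserted must transform correctly under negation. It does, since $\operatorname{ess\,liminf}(-u)=-\operatorname{ess\,limsup}\,u$.
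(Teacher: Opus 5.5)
Your proposal is correct and follows the paper's proof: the paper also sets $v=-u$, notes that $v$ is a weak supersolution of $\mathcal{M}_\alpha v=\mathcal{G}(x,v)$ with $\mathcal{G}(x,\xi):=-\mathcal{F}(x,-\xi)$ satisfying \eqref{F}, observes that $v$ is bounded below in $\R^n$, and applies Theorem~\ref{lscthm}. Your explicit checks fill in details the paper leaves implicit: using (A1) with $c=-1$ to get $\mathcal{A}(x,-\xi)=-\mathcal{A}(x,\xi)$, the oddness of $t\mapsto|t|^{p-2}t$, and the fact that the essential-$\liminf$ representative of $v$ becomes, after negation, the essential-$\limsup$ representative of $u$.
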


\begin{Remark}\label{optmq}
We remark that the integrability condition $q>n$ in Theorems~{\ref{cty}}--\ref{wkHarnack} is not expected to be optimal. Indeed, in the unweighted setting $w\equiv 1$, by Remark \ref{exprmk}, we can choose $\kappa=n/(n-p)$ when $1<p<n$ and therefore, proceeding along the lines of the proof, all our main results are valid under the assumption $q>n/p$. We also refer to \cite{DasBiswas, Garainjde} in this concern.
\end{Remark}

\begin{Remark}\label{rmk-lambda0}
It is worth emphasizing that all the main results above remain valid under the standing assumption $\lambda\geq 0$ and $\Lambda>0$, with the sole exception of the Harnack inequality (Theorem~\ref{Harthm}), where the stronger assumption $\lambda>0$ is required, mainly due to the tail estimate in Lemma \ref{Tailestimate}. In particular, the local boundedness, local H\"older continuity, weak Harnack inequality, and semicontinuity results are established without any positive lower bound on the kernel. To the best of our knowledge, these results are new in the literature even in the case $w\equiv 1$ for $\la=0$.
\end{Remark}

\begin{Remark}\label{mrrmk}
Specifically, if $\alpha=0$, the exact same proof gives that the corresponding estimates in Theorems \ref{Bdd}--\ref{wkHarnack} hold without the tail terms appearing on the right-hand sides, and the constants $C$ are independent of $\alpha$, $\lambda$, $\Lambda$, and $s$. Furthermore, Theorem \ref{lscthm} remains valid if the assumption that $u$ is bounded below in $\mathbb{R}^n$ is replaced by the weaker assumption that $u$ is bounded below only in $\Omega$.
\end{Remark}

The proofs of our main results rely on a delicate adaptation of the De~Giorgi--Nash--Moser theory to the weighted  framework. The main difficulty stems from the simultaneous presence of three distinct features: the Muckenhoupt weight, the interaction between the local and nonlocal operators, and the nonhomogeneous right-hand side satisfying the general growth condition \eqref{F}. None of the existing regularity theories can be applied directly, since the local weighted theory, the weighted nonlocal theory, and the mixed unweighted theory each treat only one aspect of the present problem.

The Muckenhoupt weight $w$ introduces a highly nonhomogeneous geometry, under which many of the standard tools in the classical De~Giorgi theory require substantial modifications. In particular, the weighted Sobolev and Poincar\'e inequalities, Caccioppoli estimates, covering arguments, and iteration procedure must all be carefully adapted to the weighted setting. At the same time, the mixed nature of the operator necessitates a delicate balance between the local energy estimates and the nonlocal tail contributions throughout the iteration. An additional difficulty arises from the lower-order term $\mathcal{F}(x,u)$, whose dependence on the solution through the growth condition \eqref{F} prevents it from being treated as a simple forcing term. The assumption \eqref{conditiong} plays a crucial role in controlling this contribution during the iteration process and in obtaining estimates that remain stable across different scales.

Our proofs build upon and unify several different regularity theories. More precisely, they combine the nonlocal De Giorgi-Nash-Moser theory developed in \cite{DKPhar, DKPahp}, the weighted fractional regularity theory in \cite{Ok24}, the weighted local theory in \cite{Fabes82}, and the analytical framework for mixed local--nonlocal equations introduced in \cite{GK}. A careful synthesis of these ideas enables us to overcome the difficulties created by the interaction of the weight, the mixed diffusion, and the nonlinear lower-order term, leading to a unified regularity theory for equation \eqref{ME}.

\subsection*{Notation and standing assumptions} Throughout the rest of the paper, we make use of the following notation and assumptions, unless otherwise mentioned.
\begin{itemize}

\item $\Omega$ will denote an open and bounded subset of $\mathbb{R}^n$ with $n\geq 2$.

\item For a given subset $\omega$ of an open set $\mathcal{O}$ in $\mathbb{R}^n$, we write $\omega\Subset\mathcal{O}$, if $\omega$ is compactly contained in $\mathcal{O}$.

\item Recall the constants $\la\geq 0$ and $\Lambda>0$ in \eqref{kernal}.

\item Recall $\kappa=\frac{n}{n-1}+\eta$ as given in \eqref{exponent}.

\item $C_1,C_2$ will denote the constants in the {hypotheses} (A2)-(A3), whereas {$\alpha>0$} will denote the constant involved in the operator $\mathcal{M}_\alpha$ and $M>0$ will denote the constant in the condition \eqref{conditiong}, respectively.

\item For a given measurable subset $S$ of $\mathbb{R}^n$ and given functions $f,g$ and $h$ on $S$, we write $f\leq g\leq h$ on $S$ to mean $f\leq g\leq h$ almost everywhere in $S$.

    \item We say $A\backsimeq B$, if there exist hidden constants $c_1,c_2$ both positive such that $$c_1B\leq A \leq c_2 B.$$

    \item The conjugate H\"{o}lder exponent of $l>1$ is denoted by $l'$ and defined by $l':=\frac{l}{l-1}$.

    \item For a measurable subset $E \subset\R ^ n$, $|E|$ denotes the Lebesgue measure of $E$. Moreover, we denote $w(E):=\int_E w(x)\;dx.$ Further, $\overline{E}$ denotes the closure of $E$.
    
\item By $\sup$ and $\inf$, we mean $\esssup$ and $\essinf$, respectively.

    \item For $r>0$ and $x_0\in\mathbb{R}^n$, we denote $B_r(x_0):=\{x\in\mathbb{R}^n:|x-x_0|<r\}$.

    \item We also recall the notation $B_{x,y}:=B_\frac{|x-y|}{2}(\frac{x+y}{2})$.

    \item For a measurable function $f:E\to \R$, defined on a measurable subset $E\subset\mathbb{R}^n$, we define
    $$f_E:=\fint_E fw\;dx=\frac{1}{w(E)}\int_E f w\;dx.$$

    \item For $k\in\mathbb{R}$, we denote by $k_{\pm}:=\max\{\pm k,0\}$.

    \item By $c$ or $C$, we mean a generic constant, whose values may change from line to line or even on the same line. If a constant $C$ depends on the parameters $l_1,\ldots,l_m$, then we write $C=C(l_1,\ldots,l_m)$.
    
\end{itemize}

\subsection*{Organization of the paper}
The remainder of the paper is organized as follows. In Section~2, we present the necessary functional setting and auxiliary results, while Section~3 is devoted to establishing the required preliminaries. Finally, in Section 4, we prove our main results.

\section{Functional setting and auxiliary results}
\subsection{Functional setting}
Throughout the paper, unless otherwise mentioned, we assume that $1<p<\infty$ and $w$ belongs to the Muckenhoupt class $A_p$ (see \cite{Muc}), which means $w$ is a positive, locally integrable function on $\mathbb{R}^n$ such that
\begin{equation}\label{Ap}
[w]_{p}:=
\sup_{B\subset\mathbb{R}^n}
\left(\frac{1}{|B|}\int_Bw\,dx\right)
\left(\frac{1}{|B|}\int_Bw^{-\frac{1}{p-1}}\,dx\right)^{p-1}
<\infty,
\end{equation}
where the supremum is taken over all balls $B\subset\mathbb{R}^n$.

A fundamental example of an $A_p$ weight is the power weight
\[
w(x)=|x|^{\sigma},\qquad -n<{\sigma}<n(p-1).
\]
For $w\in A_p$ with $1<p<\infty$ and an open subset $\Omega\subset\mathbb{R}^n$, the weighted Sobolev space $W^{1,p}(\Omega,w)$ is defined by
$$W^{1,p}(\Omega,w):=\{u\in L^p(\Omega,w): |\nabla u|\in L^p(\Omega,w)\},$$
equipped with the norm
$$\|u\|_{W^{1,p}(\Omega,w)}:=\big(\|u\|_{L^{p}(\Omega,w)}^p+\|\nabla u\|_{L^{p}(\Omega,w)}^p\big)^\frac{1}{p},$$
where $L^p(\Omega,w)$ is the weighted Lebesgue space defined by
\[
L^p(\Omega,w):=
\left\{
u:\Omega\to\mathbb{R}\text{ measurable }:
\int_\Omega |u|^pw\,dx<\infty
\right\},
\]
endowed with the norm
\[
\|u\|_{L^p(\Omega,w)}
:=
\left(\int_\Omega |u|^pw\,dx\right)^{1/p}.
\]
We write $u\in L^p_{\mathrm{loc}}(\Omega,w)$ if
$u\in L^p({\Omega'},w)$ for every ${\Omega'}\Subset\Omega$. Similarly, we say that $u\in W^{1,p}_{\mathrm{loc}}(\Omega,w)$ if
$u\in W^{1,p}({\Omega'},w)$ for every ${\Omega'}\Subset\Omega$. When $\alpha>0$, to deal with the mixed problems, we define the space $W^{1,p}_{0}(\Omega,w)$ by
$$
W^{1,p}_{0}(\Omega,w):=\{u\in W^{1,p}(\mathbb{R}^n,w):u=0\text{ in }\mathbb{R}^n\setminus\Omega\}.
$$
When $\alpha=0$, we consider the space $W_0^{1,p}(\Omega,w)$ as the closure of $C_c^{\infty}(\Omega)$ under the norm $\|\cdot\|_{W^{1,p}(\Omega,w)}$.
For more details on the weighted Sobolev spaces, we refer to \cite{Heinonen} and the references therein.

We define the tail space $L^{p-1}_{ps}(\R^n,w)$ by
$$L^{p-1}_{ps}(\R^n,w):=\Bigg\{u\in L^{p-1}_{\mathrm{loc}}(\R^n,w):\int_{\R^n}\frac{|u(x)|^{p-1}}{(1+|x|^{sp})}\frac{w(x)dx}{w(B_{x,0})}<\infty\Bigg\}.$$
For $u\in L^{p-1}_{ps}(\R^n,w)$, the mixed local and nonlocal weighted tail of $u$ with respect to the ball $B_r(x_0)$ for $x_0\in\mb{R}^n$ and $r>0$ is denoted by $\mathrm{Tail}(u;x_0,r)$ and defined as
\begin{align}\label{tail}
    \mathrm{Tail}(u;x_0,r):=\Bigg(r^p\int_{\R^n\setminus B_r(x_0)}\frac{|u(x)|^{p-1}}{|x-x_0|^{sp}}\frac{w(x)dx}{w(B_{x,x_0})}\Bigg)^\frac{1}{p-1}.
\end{align}
We are now ready to define the notion of weak solutions to equation \eqref{ME}. To this end, we define the solution space $X_\alpha$ by
\begin{equation}\label{ss}
X_\alpha:=\begin{cases}
    W^{1,p}_{\mathrm{loc}}(\Omega,w)\text{ if }\alpha=0,\\
    W^{1,p}_{\mathrm{loc}}(\Omega,w)\cap L^{p-1}_{ps}(\R^n,w)\text{ if }\alpha>0.
\end{cases}
\end{equation}
\begin{Definition}\label{def}
    Let $\alpha\geq 0$,\,$0<s<1<p<\infty$ and $w\in A_p$. We say that a function $u\in X_\alpha$ is a weak subsolution (or supersolution) of equation \eqref{ME} if for every $\Omega'\Subset\Omega$ and for every nonnegative $\varphi \in W^{1,p}_{0}(\Omega',w)$, the following holds:
    \begin{equation}\label{wksoleqn}
    \begin{split}
        &\int_\Omega\mathcal{A}(x,\nabla u)\cdot\nabla\varphi\;dx\\
        &+\alpha\,\int_{\R^n}\int_{\R^n}\frac{|u(x)-u(y)|^{p-2}(u(x)-u(y))(\varphi(x)-\varphi(y))}{|x-y|^{sp}} K(x,y)\;dx\;dy\\
      &\leq~(\mbox{ or } \geq) \int_\Omega \mathcal{F}(x,u)\varphi\;dx.
    \end{split}
    \end{equation}
    A function $u\in X_\alpha$ is said to be a weak solution to \eqref{ME} if it is both a weak subsolution and a weak supersolution of \eqref{ME}.
\end{Definition}
\begin{Remark}\label{wdws}
When $\alpha>0$, by Lemma \ref{lemmatail} proved in the next subsection, Definition \ref{def} is well stated.
\end{Remark}

\subsection{Auxiliary results}
In this subsection, we establish Lemma \ref{lemmatail} and further mention some known results, which are crucial for us. To this end, first we have the following lemma.
\begin{Lemma}\label{tailf}
    If $u\in L^{p-1}_{ps}(\R^n,w)$, then for every $r>0$ and $x_0\in\mathbb{R}^n$, we have $\Tail(u;x_0,r)<\infty.$
\end{Lemma}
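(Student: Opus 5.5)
We need to show that on $\R^n\setminus B_r(x_0)$ the integrand $\frac{|u(x)|^{p-1}}{|x-x_0|^{sp}}\frac{w(x)}{w(B_{x,x_0})}$ is bounded by a constant times the integrand $\frac{|u(x)|^{p-1}}{1+|x|^{sp}}\frac{w(x)}{w(B_{x,0})}$ defining $L^{p-1}_{ps}(\R^n,w)$, up to a locally integrable piece. We split $\R^n\setminus B_r(x_0)$ into a bounded region $B_{R}(0)\setminus B_r(x_0)$ and the far region $\R^n\setminus B_R(0)$, where $R$ is chosen large, say $R\geq 2|x_0|+2r+1$.

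\emph{Bounded region.} For $x\in B_R(0)\setminus B_r(x_0)$ we have $|x-x_0|\geq r$, so $|x-x_0|^{-sp}\leq r^{-sp}$. The ball $B_{x,x_0}$ has radius $|x-x_0|/2\geq r/2$, and its center lies in a fixed bounded set. By the doubling property of $A_p$ weights, $w(B_{x,x_0})$ is bounded below by a positive constant $c(r,R,x_0,w)$. Indeed, $B_{x,x_0}$ contains a ball of radius $r/2$ centered within $B_{R+|x_0|}(0)$, and the infimum of $w$-measures of such balls is positive, since $w>0$ a.e. and the balls vary continuously within a compact family. The contribution is therefore at most $C\int_{B_R(0)}|u|^{p-1}w\,dx<\infty$, because $u\in L^{p-1}_{\mathrm{loc}}(\R^n,w)$.

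\emph{Far region.} For $|x|\geq R$ we have $|x-x_0|\geq |x|/2$, hence $|x-x_0|^{-sp}\leq 2^{sp}|x|^{-sp}\leq C(1+|x|^{sp})^{-1}$, using $|x|\geq1$. Next we compare $w(B_{x,x_0})$ with $w(B_{x,0})$. Both balls have radius comparable to $|x|$: $|x-x_0|/2\in[|x|/4,\,3|x|/4]$ and $|x|/2$. Their centers differ by $|x_0|/2\leq |x|/4$. Both are therefore contained in $B_{2|x|}(x/2)$, and each contains a ball of radius $\geq |x|/8$ around its own center. The doubling property of $w$ (from $A_p$: $w(B_{2\rho})\leq C[w]_p 2^{np} w(B_\rho)$, iterated a fixed number of times) gives $w(B_{x,0})\leq C(n,p,[w]_p)\,w(B_{x,x_0})$. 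The far-region integral is thus bounded by $C\int_{\R^n}\frac{|u|^{p-1}}{1+|x|^{sp}}\frac{w\,dx}{w(B_{x,0})}<\infty$.

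The main step is the doubling comparison $w(B_{x,0})\lesssim w(B_{x,x_0})$ in the far region. This follows from the standard $A_p$ estimate $\left(\frac{|E|}{|B|}\right)^p\leq [w]_p\frac{w(E)}{w(B)}$ for $E\subset B$: apply it with $B=B_{2|x|}(x/2)$ and $E=B_{x,x_0}$, whose radius is at least $|x|/4$, giving $w(B_{x,0})\leq w(B)\leq [w]_p 8^{np} w(B_{x,x_0})$. The positivity of the lower bound in the bounded region uses the same inequality, with $E=B_{x,x_0}$ inside a fixed large ball $B$, giving $w(B_{x,x_0})\geq [w]_p^{-1}(r/(2\cdot\mathrm{rad}B))^{np}w(B)>0$. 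Combining the two regions yields $\Tail(u;x_0,r)<\infty$.
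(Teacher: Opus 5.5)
Your proof is correct; it differs from the paper's mainly in how it is organized. The paper does not split $\R^n\setminus B_r(x_0)$. Instead it bounds the ratio of the two integrands uniformly on the whole exterior region. For $|x-x_0|\ge r$ one has $\frac{1+|x|^{sp}}{|x-x_0|^{sp}}\le c(s,p)(1+\frac{1+|x_0|^{sp}}{r^{sp}})$. Moreover, both $B_{x,0}$ and $B_{x,x_0}$ lie in the single ball $B_{\frac{|x|+|x_0|}{2}}(\frac{x+x_0}{2})$, whose radius is at most $1+\frac{2|x_0|}{r}$ times that of $B_{x,x_0}$. Lemma \ref{comlem} then gives $w(B_{x,0})\le [w]_p(1+\frac{2|x_0|}{r})^{np}w(B_{x,x_0})$ for every such $x$. (As printed, \eqref{R1} and \eqref{R3} carry $w(B_{x_0,0})$, but $w(B_{x,0})$ is what is needed to recover the measure in the definition of $L^{p-1}_{ps}(\R^n,w)$. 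Your far-region inequality is exactly this corrected comparison.) The paper's global version is shorter, gives an explicit constant, and uses only the weighted integral condition. Your split additionally uses the $L^{p-1}_{\mathrm{loc}}(\R^n,w)$ part of the definition, together with a separate positive lower bound for $w(B_{x,x_0})$ near $x_0$. That is legitimate, since local integrability is built into the definition. The split is unnecessary, though: using the enclosing ball above, the same kind of comparison you make for $|x|\ge R$ also covers the bounded region, because $|x-x_0|\ge r$ keeps the radius ratio bounded there.
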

\begin{proof}
We observe that
\begin{align}\label{R1}
    \int_{\R^n\setminus B_r(x_0)}\frac{|u(x)|^{p-1}}{|x-x_0|^{sp}}\frac{w(x)dx}{w(B_{x,x_0})}=\int_{\R^n\setminus B_r(x_0)}\frac{|u(x)|^{p-1}}{1+|x|^{sp}}\frac{1+|x|^{sp}}{|x-x_0|^{sp}}\frac{w(B_{x_0,0})}{w(B_{x,x_0})}\frac{w(x)dx}{w(B_{x_0,0})}.
\end{align}
Taking $|x-x_0|>r$ into account, one has
\begin{align}\label{R2}
    \frac{1+|x|^{sp}}{|x-x_0|^{sp}}\leq c(s,p)\Big(\frac{1+|x-x_0|^{sp}+|x_0|^{sp}}{|x-x_0|^{sp}}\Big)\leq c(s,p)\bigg(1+\frac{1+|x_0|^{sp}}{r^{sp}}\bigg)<\infty.
\end{align}
{Utilizing the facts} $B_{x,0}\subset B_{\frac{|x|+|x_0|}{2}}(\frac{x+x_0}{2})$ and $B_{x,x_0}\subset B_{\frac{|x|+|x_0|}{2}}(\frac{x+x_0}{2})$ along with Lemma \ref{comlem}, we obtain 
\begin{align}\label{R3}
    \frac{w(B_{x_0,0})}{w(B_{x,x_0})}&=\frac{w(B_{x_0,0})}{w(B_{\frac{|x|+|x_0|}{2}}(\frac{x+x_0}{2}))}\frac{w(B_{\frac{|x|+|x_0|}{2}}(\frac{x+x_0}{2}))}{w(B_{x,x_0})}\nonumber\\
    &\leq c[w]_p\Big(\frac{|B_{x_0,0}|}{|B_{\frac{|x|+|x_0|}{2}}(\frac{x+x_0}{2})|}\Big)^\sigma \Big(\frac{|B_{\frac{|x|+|x_0|}{2}}(\frac{x+x_0}{2})|}{|B_{x,x_0}|}\Big)^p\nonumber\\
    &\leq c[w]_p\Big(\frac{|x_0|}{|x|+|x_0|}\Big)^{n\sigma} \Big(\frac{|x|+|x_0|}{|x-x_0|}\Big)^{np}\nonumber\\
    &\leq c[w]_p\bigg(1+\frac{2|x_0|}{r}\bigg)^{np}<\infty.
\end{align}
{Using \eqref{R2} and \eqref{R3} in \eqref{R1}, we obtain $\Tail(u;x_0,r)<\infty$.}
\end{proof}

\begin{Lemma}\label{comparingseminorms}
Suppose $u\in W^{1,p}_{\mathrm{loc}}(\Omega,w)$. Then for every ${\Omega'}\Subset\Omega$, we have
  \begin{align*}
       \int_{\Omega'}\int_{\Omega'} \frac{|u(x)-u(y)|^p}{|x-y|^{sp}}\frac{
    w(x)w(y)}{w(B_{x,y})}\;dx\;dy<\infty.
    \end{align*}
\end{Lemma}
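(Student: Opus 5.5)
We bound the weighted Gagliardo seminorm by the weighted gradient norm, using the standard line-segment representation of differences. Fix $\Omega'\Subset\Omega$ and choose $\Omega''$ with $\Omega'\Subset\Omega''\Subset\Omega$. Let $d=\mathrm{dist}(\Omega',\partial\Omega'')>0$, and split the domain of integration into the near region $|x-y|<d$ and the far region $|x-y|\geq d$.

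\emph{Far region.} Here $|x-y|^{-sp}\leq d^{-sp}$. Since $B_{x,y}\supset B_{d/2}(\tfrac{x+y}{2})$ and $\tfrac{x+y}{2}$ ranges over a bounded set, the $A_p$ doubling property (Lemma \ref{comlem}) together with the positivity and local integrability of $w$ give
\[
\inf w(B_{x,y})\geq c(d,\Omega',[w]_p)>0 .
\]
The far contribution is therefore bounded by
\[
C\int_{\Omega'}\int_{\Omega'}\bigl(|u(x)|^p+|u(y)|^p\bigr)w(x)w(y)\,dx\,dy\leq C\,w(\Omega')\,\|u\|^p_{L^p(\Omega',w)}<\infty .
\]

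\emph{Near region.} Here the segment $[x,y]$ stays in $\Omega''$. First argue for smooth $u$; the general case follows by density of smooth functions in $W^{1,p}(\Omega'',w)$ for $w\in A_p$, together with Fatou's lemma. Write $h=y-x$ and
\[
u(y)-u(x)=\int_0^1\nabla u(x+th)\cdot h\,dt .
\]
The key step is the bound
\[
\frac{w(y)}{w(B_{x,y})}\lesssim \frac{1}{|h|^n}\cdot\frac{w(y)}{\fint_{B_{x,y}}w}.
\]
Integrating in $y$ first with $|h|=\rho$ fixed, we would like to use
\[
\frac{|u(x)-u(y)|^p}{\rho^{sp}}\leq \rho^{p-sp}\int_0^1|\nabla u(x+th)|^p\,dt,
\]
but the weights $w(x)w(y)$ do not sit at the point $x+th$, so the weighted integral does not factor directly.

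\emph{Main obstacle and proposed fix.} Handling these weights is the main obstacle, and I would use a maximal-function argument instead of the segment integral. Recall the pointwise estimate
\[
|u(x)-u(y)|\leq C|x-y|\bigl(M_{2|x-y|}|\nabla u|(x)+M_{2|x-y|}|\nabla u|(y)\bigr),
\]
where $M_R$ denotes the Hardy--Littlewood maximal function truncated at radius $R$. By symmetry it suffices to treat the term with $x$. We bound $M|\nabla u|(x)$ by the full maximal function of $|\nabla u|\chi_{\Omega''}$. Since $w\in A_p$, Muckenhoupt's theorem gives boundedness of $M$ on $L^p(w)$. It then remains to show
\[
\sup_x\int_{|x-y|<d}|x-y|^{p-sp}\frac{w(y)}{w(B_{x,y})}\,dy<\infty .
\]
For this, decompose dyadically into annuli $2^{-k-1}d\leq|x-y|<2^{-k}d$. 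On each annulus, the doubling property gives $w(B_{x,y})\gtrsim w(B_{2^{-k}d}(x))$, while
\[
\int_{\text{annulus}}w(y)\,dy\leq w(B_{2^{-k}d}(x)).
\]
Hence each annulus contributes at most $C(2^{-k}d)^{p-sp}$, and the series over $k$ converges because $p-sp>0$. This yields
\[
\text{near}\leq C\int_{\mathbb{R}^n}\bigl(M(|\nabla u|\chi_{\Omega''})\bigr)^p w\,dx\leq C\,\|\nabla u\|^p_{L^p(\Omega'',w)}<\infty .
\]
This argument does not even need the smooth approximation, since the pointwise maximal estimate holds almost everywhere for Sobolev functions.
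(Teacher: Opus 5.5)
Your argument is correct, up to a small bookkeeping fix noted below, but it follows a different route from the paper.

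The paper's route is by localization. It takes a partition of unity $\{\eta_i\}$ subordinate to finitely many balls $B_i=B_r(x_i)$ covering $\overline{\Omega'}$ and writes $u=\sum_i\eta_i u$ on $\Omega'$. The diagonal pieces $B_i\times B_i$ are controlled by quoting the compact-support comparison \cite[Lemma 2.13]{Ok24}, which bounds the weighted Gagliardo seminorm of $\eta_i u\in W^{1,p}_0(B_i,w)$ by $\int_{B_i}|\nabla(\eta_i u)|^pw$. In the cross terms $B_i\times(\mathbb{R}^n\setminus B_i)$ only $|\eta_i u(y)|^p$ survives. These are handled by the exterior integral of Lemma \ref{intlem}(ii), after comparing $w(B_{x,x_i})$ with $w(B_{x,y})$ via Lemma \ref{comlem}.

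You instead split according to $|x-y|$. The far part is trivial because $w(B_{x,y})$ is bounded below there. The near part you do from scratch, using three ingredients:
\begin{itemize}
\item the Haj{\l}asz-type pointwise maximal inequality;
\item Muckenhoupt's maximal theorem on $L^p(w)$;
\item a dyadic-annulus bound for $\int_{B_d(x)}|x-y|^{p-sp}\,w(y)/w(B_{x,y})\,dy$, which is exactly Lemma \ref{intlem}(i) with $t=p-sp>0$.
\end{itemize}
The paper's proof is shorter but rests on an external black box. Yours is self-contained and makes visible where $A_p$, rather than mere doubling, is used: the weighted maximal theorem. The price is heavier harmonic-analysis machinery.

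Two details need tidying.

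First, the pointwise inequality requires $u\in W^{1,1}$ on $B_{2|x-y|}(x)\cup B_{2|x-y|}(y)$. Your domination $M_{2|x-y|}|\nabla u|\le M(|\nabla u|\chi_{\Omega''})$ also requires those balls to lie in $\Omega''$. Both can fail once $|x-y|>d/2$ with $d=\mathrm{dist}(\Omega',\partial\Omega'')$. The fix is to take the near region to be $|x-y|<d/2$; the far-region argument is unchanged.

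Second, you should record that $W^{1,p}(\Omega'',w)\subset W^{1,1}(\Omega'')$. This follows from H\"older's inequality, since $w^{-1/(p-1)}\in L^1_{\mathrm{loc}}$ for $w\in A_p$. It is what justifies the unweighted Poincar\'e steps behind the pointwise maximal estimate.

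The preliminary segment-integral discussion and the density/Fatou step are not needed.
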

\begin{proof}
    Let $\Omega'\Subset\Omega''\Subset\Omega$ and $r:=\frac{1}{2}\mathrm{dist}(\Omega',\partial\Omega'')>0$. Thus, using the compactness of $\overline{\Omega'}$, there exist finitely many balls $B_i:=B_{r}(x_i)$ for $i=1,2,...,N$, such that $\overline{\Omega'}\subset\cup_{i=1}^{N}B_i$. Using partition of unity, there exist a sequence $\{\eta_i\}_{i=1}^{N}\subset C^\infty_c(B_i)$ such that $0\leq \eta_i\leq 1$ in $B_i$ for every $i\in\{1,\ldots,N\}$ and $\sum_{i=1}^N\eta_i=1$ in $\Omega'$. We observe that in $\Omega'$, $u=\sum_{i=1}^{N}\eta_iu=\sum_{i=1}^{N}u_i$ with $u_i=\eta_iu\in W^{1,p}_{0}(B_i,w)$. Using \cite[Lemma 2.13]{Ok24}, we have 
    \begin{align}\label{SD}
     \int_{B_i}\int_{B_i}\frac{|u_i(x)-u_i(y)|^p}{|x-y|^{sp}}\frac{
    w(x)w(y)}{w(B_{x,y})}\;dx\;dy\leq C\int_{B_i}|\nabla u_i|^pw\;dx,
    \end{align}
    for some constant $C=C(n,p,[w]_p)>0$.
Now we observe that
\begin{align}\label{SD6}
    \int_{\Omega'}\int_{\Omega'} \frac{|u(x)-u(y)|^p}{|x-y|^{sp}}\frac{
    w(x)w(y)}{w(B_{x,y})}\;dx\;dy&=\int_{\Omega'}\int_{\Omega'} \frac{|\sum_{i=1}^Nu_i(x)-\sum_{i=1}^Nu_i(y)|^p}{|x-y|^{sp}}\frac{
    w(x)w(y)}{w(B_{x,y})}\;dx\;dy\nonumber\\
    &\leq C\sum_{i=1}^N\int_{\R^n}\int_{\R^n} \frac{|u_i(x)-u_i(y)|^p}{|x-y|^{sp}}\frac{
    w(x)w(y)}{w(B_{x,y})}\;dx\;dy\nonumber\\
    &\leq C\sum_{i=1}^N(I_{i1}+2I_{i2}),
\end{align}
for some constant $C=C(p)>0$, where $$I_{i1}:=\int_{B_i}\int_{B_i}\frac{|u_i(x)-u_i(y)|^p}{|x-y|^{sp}}\frac{
    w(x)w(y)}{w(B_{x,y})}\;dx\;dy,$$
    and
$$I_{i2}:=\int_{B_i}\int_{\R^n\setminus B_i}\frac{|u_i(x)-u_i(y)|^p}{|x-y|^{sp}}\frac{
    w(x)w(y)}{w(B_{x,y})}\;dx\;dy.$$
\textbf{Estimate of $I_{i1}$:}
Taking (\ref{SD}) into account, we obtain 
\begin{align}\label{SD5}
    I_{i1}\leq C\int_{B_i}|\nabla u_i|^pw\;dx&\leq C\Big(\int_{B_i}\eta_i^p|\nabla u|^pw\;dx+\int_{B_i} u^p|\nabla \eta_i|^pw\;dx\Big)\nonumber\\
    &\leq C \Big(\int_{B_i}|\nabla u|^pw\;dx+\int_{B_i} |u|^pw\;dx\Big)\leq C\|u\|_{W^{1,p}(\Omega'',w)},
\end{align}
for some constant $C=C(n,p,[w]_p,i)>0$.\\
\textbf{Estimate of $I_{i2}$:}
Note that if $x\in \R^n\setminus B_i$ and $y\in \mathrm{supp}\,\eta_i\Subset B_i$, then $|x-y|\geq r_i>0$ and hence
\begin{align}\label{SD1}
    \frac{|x-x_i|}{|x-y|}\leq 1+ \frac{|y-x_i|}{|x-y|}\leq 1+\frac{r}{r_i}\leq C,
\end{align}
for some $C>0$, may depend on $i$. Furthermore, since $B_{x,x_i}\cup B_{x,y}\subset B_{2|x-x_i|}(x_i)$, using Lemma \ref{comlem}, we have
\begin{align}\label{SD2}
    \frac{w(B_{x,x_i})}{w(B_{x,y})}\leq \frac{w(B_{x,x_i})}{w(B_{2|x-x_i|}(x_i))}\frac{w(B_{2|x-x_i|}(x_i))}{w(B_{x,y})}&\leq C\Big( \frac{|B_{x,x_i}|}{|B_{2|x-x_i|}(x_i)|}\Big)^\sigma\Big(\frac{|B_{2|x-x_i|}(x_i)|}{|B_{x,y}|}\Big)^p \nonumber\\
    &\leq C\Big(\frac{|x-x_i|}{|x-y|}\Big)^{np}\leq C,
\end{align}
for some constant $C=C(n,p,[w]_p,i)>0$, where we have used (\ref{SD1}). Thanks to Lemma \ref{intlem}, combining (\ref{SD1}) and (\ref{SD2}), we deduce that
\begin{align}\label{SD3}
    I_{i2} &=\int_{B_i}\int_{\R^n\setminus B_i}\frac{|u_i(y)|^p}{|x-y|^{sp}}\frac{
    w(x)w(y)}{w(B_{x,y})}\;dx\;dy\nonumber\\
    &\leq \int_{\mathrm{supp}\,\eta_i}\int_{\R^n\setminus B_i}\frac{|\eta_i(y)u(y)|^p}{|x-y|^{sp}}\frac{
    w(x)w(y)}{w(B_{x,y})}\;dx\;dy\nonumber\\
    &\leq C\int_{\mathrm{supp}\,\eta_i}\Big(\int_{\R^n\setminus B_i}\frac{1}{|x-x_i|^{sp}}\frac{
    w(x)}{w(B_{x,x_i})}\;dx\Big)|u(y)|^pw(y)\;dy\nonumber\\
    &\leq Cr^{-sp}\int_{B_i}|u|^pw\;dy\leq Cr^{-sp}\|u\|_{W^{1,p}(\Omega'',w)},
\end{align}
for some constant $C=C(n,p,[w]_p,i,s)>0$, where we have also used Lemma \ref{intlem} to obtain the above estimate. Utilizing the estimates \eqref{SD5} and \eqref{SD3} in \eqref{SD6}, the result follows.
\end{proof}

\begin{Lemma}\label{lemmatail}
    If $u\in W^{1,p}_{\mathrm{loc}}(\Omega,w) \cap L^{p-1}_{ps}(\R^n,w)$ and $\varphi\in W^{1,p}_{0}(\Omega',w)$ for some $\Omega'\Subset\Omega$, then
    $$\int_{\R^n}\int_{\R^n}\frac{|u(x)-u(y)|^{p-2}(u(x)-u(y))(\varphi(x)-\varphi(y))}{|x-y|^{sp}}K(x,y)\;dx\;dy<\infty.$$
\end{Lemma}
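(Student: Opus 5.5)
We bound the integrand in absolute value and show finiteness. By the kernel bound \eqref{kernal} it suffices to treat $K(x,y)$ replaced by $\Lambda\, w(x)w(y)/w(B_{x,y})$. Since $\varphi\in W^{1,p}_0(\Omega',w)$ vanishes outside $\Omega'$, fix $\Omega'\Subset\Omega''\Subset\Omega$ and set $d:=\mathrm{dist}(\Omega',\partial\Omega'')>0$. We split $\mathbb{R}^n\times\mathbb{R}^n$ into the region $\Omega''\times\Omega''$ and the two symmetric cross regions $\Omega'\times(\mathbb{R}^n\setminus\Omega'')$ and $(\mathbb{R}^n\setminus\Omega'')\times\Omega'$. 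On $(\mathbb{R}^n\setminus\Omega'')\times(\mathbb{R}^n\setminus\Omega'')$ the integrand vanishes, and so does it on the parts of the cross regions where the $\Omega''$-variable lies outside $\Omega'$.

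\textbf{Local part.} On $\Omega''\times\Omega''$ we apply H\"older's inequality with exponents $p'$ and $p$ with respect to the measure $\frac{w(x)w(y)}{|x-y|^{sp}w(B_{x,y})}dx\,dy$. This bounds the integral by
\[
\Big(\iint_{\Omega''\times\Omega''}\frac{|u(x)-u(y)|^p}{|x-y|^{sp}}\frac{w(x)w(y)}{w(B_{x,y})}\Big)^{\frac{p-1}{p}}\Big(\iint_{\Omega''\times\Omega''}\frac{|\varphi(x)-\varphi(y)|^p}{|x-y|^{sp}}\frac{w(x)w(y)}{w(B_{x,y})}\Big)^{\frac1p}.
\]
The first factor is finite by Lemma \ref{comparingseminorms}, applied with $\Omega''\Subset\Omega$ and $u\in W^{1,p}_{\mathrm{loc}}(\Omega,w)$. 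The second factor is finite because $\varphi\in W^{1,p}_{\mathrm{loc}}$ of a larger set, again by Lemma \ref{comparingseminorms}; alternatively one can use \cite[Lemma 2.13]{Ok24} directly, as in \eqref{SD}.

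\textbf{Cross part.} For $y\in\Omega'$ and $x\notin\Omega''$ we have $\varphi(x)=0$ and $|x-y|\ge d$. We use
\[
|u(x)-u(y)|^{p-1}\le C(|u(x)|^{p-1}+|u(y)|^{p-1}).
\]
As in \eqref{SD1}--\eqref{SD2}, we fix a point $x_0\in\Omega'$ and compare $|x-y|$ with $|x-x_0|$, and $w(B_{x,y})$ with $w(B_{x,x_0})$, via Lemma \ref{comlem}. This gives
\[
\frac{1}{|x-y|^{sp}w(B_{x,y})}\le \frac{C}{|x-x_0|^{sp}w(B_{x,x_0})},
\]
with $C$ depending on $d$, $\mathrm{diam}\,\Omega''$ and $[w]_p$. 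The $|u(x)|^{p-1}$ term is then bounded by
\[
C\int_{\Omega'}|\varphi(y)|w(y)\,dy\cdot\int_{\mathbb{R}^n\setminus B_d(x_0)}\frac{|u(x)|^{p-1}}{|x-x_0|^{sp}}\frac{w(x)dx}{w(B_{x,x_0})}.
\]
Here the second factor is $d^{-p}\,\mathrm{Tail}(u;x_0,d)^{p-1}<\infty$ by Lemma \ref{tailf}. The first factor is finite since $\varphi\in L^p(\Omega',w)$ and $w(\Omega')<\infty$. The $|u(y)|^{p-1}$ term is bounded by
\[
C\int_{\Omega'}|u(y)|^{p-1}|\varphi(y)|w(y)\,dy\cdot\int_{\mathbb{R}^n\setminus B_d(x_0)}\frac{w(x)\,dx}{|x-x_0|^{sp}w(B_{x,x_0})}.
\]
The second factor is $\le Cd^{-sp}$ by Lemma \ref{intlem}, as in \eqref{SD3}. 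The first is finite by H\"older's inequality, since $u,\varphi\in L^p(\Omega',w)$.

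\textbf{Main obstacle.} The delicate step is the comparison $w(B_{x,x_0})\lesssim w(B_{x,y})$ uniformly for $y\in\Omega'$ and $x\notin\Omega''$. This requires both balls to lie in a common ball $B_{2|x-x_0|}(x_0)$ and uses the doubling and $A_p$ comparison of Lemma \ref{comlem}, together with $|x-x_0|/|x-y|\le 1+\mathrm{diam}(\Omega')/d$. This is exactly the mechanism used in \eqref{SD2}, so we expect it to carry over.
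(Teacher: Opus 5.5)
Your proof is correct and is essentially the paper's argument. Both split into $\Omega''\times\Omega''$ (H\"older plus Lemma \ref{comparingseminorms}) and the cross regions (splitting $|u(x)-u(y)|^{p-1}$ and using Lemmas \ref{comlem}, \ref{intlem}, \ref{tailf}); the only difference is that you compare against a single center $x_0\in\Omega'$ instead of covering $\overline{\Omega'}$ by finitely many balls $B_r(x_i)$, which is legitimate since $\Omega'$ is bounded.

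One small correction: $B_{x,y}$ need not lie in $B_{2|x-x_0|}(x_0)$ when $\mathrm{diam}\,\Omega'>d$. Use instead the ball $B_{|x-x_0|+\mathrm{diam}\,\Omega'}(x_0)$, which contains both $B_{x,x_0}$ and $B_{x,y}$ and has radius at most $(1+2\,\mathrm{diam}\,\Omega'/d)|x-y|$; then $w(B_{x,x_0})\le C\,w(B_{x,y})$ follows from Lemma \ref{comlem} exactly as you intend.
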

\begin{proof}
Let $\Omega'\Subset\Omega''\Subset\Omega$ with $r:=\frac{1}{4}\mathrm{dist}(\partial\Omega',\partial\Omega'')$. The compactness of $\overline{\Omega'}$ guaranties that there exists $x_1,x_2,...,x_N\in\Omega'$ such that $$\overline{\Omega'}{\subset}\cup_{i=1}^{N}B_r(x_i).$$ Now, we notice that
\begin{align}\label{If}  &\Big|\int_{\R^n}\int_{\R^n}\frac{|u(x)-u(y)|^{p-2}(u(x)-u(y))(\varphi(x)-\varphi(y))}{|x-y|^{sp}}K(x,y)\;dx\;dy\Big|\nonumber\\
    &\leq \Lambda\int_{\R^n}\int_{\R^n}\frac{|u(x)-u(y)|^{p-1}|\varphi(x)-\varphi(y)|}{|x-y|^{sp}}\frac{
    w(x)w(y)}{w(B_{x,y})}\;dx\;dy\nonumber\\
    &:=I_1+2I_2,
\end{align}
where $$I_1:=\Lambda\int_{\Omega''}\int_{\Omega''}\frac{|u(x)-u(y)|^{p-1}|\varphi(x)-\varphi(y)|}{|x-y|^{sp}}\frac{
    w(x)w(y)}{w(B_{x,y})}\;dx\;dy,$$
    and
$$I_2:=\Lambda\int_{\R^n\setminus\Omega''}\int_{\Omega''}\frac{|u(x)-u(y)|^{p-1}|\varphi(x)-\varphi(y)|}{|x-y|^{sp}}\frac{
    w(x)w(y)}{w(B_{x,y})}\;dx\;dy.$$
\textbf{Estimate of $I_1$:} Using H\"{o}lder's inequality and Lemma \ref{comparingseminorms}, we deduce
\begin{align}\label{I1f}
   I_1&\leq \Big(\int_{\Omega''}\int_{\Omega''}\frac{|u(x)-u(y)|^p|}{|x-y|^{sp}}\frac{
    w(x)w(y)}{w(B_{x,y})}\;dx\;dy\Big)^\frac{p-1}{p}\Big(\int_{\Omega''}\int_{\Omega''}\frac{|\varphi(x)-\varphi(y)|^p}{|x-y|^{sp}}\frac{
    w(x)w(y)}{w(B_{x,y})}\;dx\;dy\Big)^\frac{1}{p}\nonumber\\
    &<\infty.
\end{align}
\textbf{Estimate of $I_2$:} We observe that
\begin{align}\label{B}
I_2&=\Lambda\int_{\R^n\setminus\Omega''}\int_{\Omega''}\frac{|u(x)-u(y)|^{p-1}|\varphi(x)|}{|x-y|^{sp}}\frac{
    w(x)w(y)}{w(B_{x,y})}\;dx\;dy\nonumber\\
    &=\Lambda\int_{\R^n\setminus\Omega''}\int_{\Omega'}\frac{|u(x)-u(y)|^{p-1}|\varphi(x)|}{|x-y|^{sp}}\frac{
    w(x)w(y)}{w(B_{x,y})}\;dx\;dy\nonumber\\
    &\leq \Lambda \int_{\Omega'}\int_{\R^n\setminus \Omega''}\frac{|u(x)-u(y)|^{p-1}|\varphi(x)|}{|x-y|^{sp}}\frac{
    w(x)w(y)}{w(B_{x,y})}\;dy\;dx\nonumber\\
    &\leq 2^{p-1}\Lambda \Big(\int_{\Omega'}\int_{\R^n\setminus \Omega''}\frac{|u(x)|^{p-1}|\varphi(x)|}{|x-y|^{sp}}\frac{
    w(x)w(y)}{w(B_{x,y})}\;dy\;dx\nonumber\\
    &+\int_{\Omega'}\int_{\R^n\setminus \Omega''}\frac{|u(y)|^{p-1}|\varphi(x)|}{|x-y|^{sp}}\frac{
    w(x)w(y)}{w(B_{x,y})}\;dy\;dx\Big)\nonumber\\
    &\leq 2^{p-1}\Lambda \Big(\int_{\Omega'}|u(x)|^{p-1}|\varphi(x)|\Big(\int_{\R^n\setminus B_r(x)}\frac{1}{|x-y|^{sp}}\frac{
    w(y)}{w(B_{x,y})}\;dy\Big)w(x)\;dx\nonumber\\
    &+\sum_{i=1}^N \int_{B_r(x_i)}\Big(\int_{\R^n\setminus B_{2r}(x_i)}\frac{|u(y)|^{p-1}}{|x-y|^{sp}}\frac{
    w(y)}{w(B_{x,y})}\;dy\Big)|\varphi(x)|w(x)\;dx\Big).
\end{align}
For $x\in B_r(x_i),\;y\in \R^n\setminus {B_{2r}(x_i)}$, we have $B_{x,y} \cup B_{x_i,y}\subset B_{2|x_i-y|}(x_i)$. Thus, using Lemma \ref{comlem}, we get
\begin{align}\label{B0}
    \frac{w(B_{x_i,y})}{w(B_{x,y})}= \frac{w(B_{x_i,y})}{w(B_{2|x_i-y|}(x_i))} \frac{w(B_{2|x_i-y|}(x_i))}{w(B_{x,y})}&\leq C(p,[w]_p)\Big(\frac{|B_{x_i,y}|}{|B_{2|x_i-y|}(x_i)|}\Big)^\sigma\Big(\frac{|B_{2|x_i-y|}(x_i)|}{|B_{x,y}|}\Big)^p\nonumber\\
    &\leq C\Big(\frac{|x_i-y|}{|x-y|}\Big)^{np}\nonumber\\
    &\leq C\Big(1+\frac{|x_i-x|}{|x-y|}\Big)^{np}\leq C,
\end{align}
for some constant $C=C(n,p,[w]_p,i)>0$. Applying Lemma \ref{intlem} and (\ref{B0}), estimate (\ref{B}) yields 
\begin{align}\label{I2f}
    I_2&\leq C\Big(r^{-sp}\int_{\Omega'}|u(x)|^{p-1}|\varphi(x)|w(x)\;dx\nonumber\\
    &+\sum_{i=1}^N \int_{B_r(x_i)}\Big(\int_{\R^n\setminus B_{2r}(x_i)}\frac{|u(y)|^{p-1}}{|x_i-y|^{sp}}\frac{
    w(y)}{w(B_{x_i,y})}\;dy\Big)|\varphi(x)|w(x)\;dx\Big)\nonumber\\
    &\leq C\Big(r^{-sp}\int_{\Omega'}|u(x)|^{p-1}|\varphi(x)|w(x)\;dx\nonumber\\
    &+\sum_{i=1}^N r^{-p}\Tail^{p-1}(u;x_i,2r)\int_{B_r(x_i)}|\varphi(x)|w(x)\;dx\Big)\nonumber\\
    &\leq Cr^{-p}\Big(\int_{\Omega'}|u(x)|^{p-1}|\varphi(x)|w(x)\;dx\nonumber\\
    &+\Big(\int_{\Omega''}|\varphi(x)|w(x)\;dx\Big)\sum_{i=1}^N \Tail^{p-1}(u;x_i,2r)\Big)\nonumber\\
    &\leq Cr^{-p}\Big(\|\phi\|_{L^p(\Omega',w)}\|u\|_{L^p(\Omega',w)}^{p-1}
    +\|\varphi(x)\|_{L^1(\Omega'',w)}\sum_{i=1}^N \Tail^{p-1}(u;x_i,2r)\Big)<\infty,
\end{align}
for some constant $C=C(n,p,s,[w]_p,i)>0$, where we have used Lemma \ref{tailf}. Combining \eqref{I1f} and \eqref{I2f} into \eqref{If}, the result follows.
\end{proof}

\subsubsection{Known results}
For the following result, see \cite[Lemma 15.5 and Lemma 15.8]{Heinonen}.
\begin{Lemma}\label{comlem}
Let $1<p<\infty$ and $w\in A_p$. Suppose that $D$ is a measurable subset of a ball $B\subset\mathbb{R}^n$, then
\begin{equation*}
\frac{1}{[w]_{p}}
\left(\frac{|D|}{|B|}\right)^p
\le
\frac{w(D)}{w(B)}
\le
C
\left(\frac{|D|}{|B|}\right)^\sigma,
\end{equation*}
where the constants $C>0$ and $\sigma>0$ depend only on $n$, $p$, and $[w]_{p}$.
\end{Lemma}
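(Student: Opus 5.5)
We prove the two inequalities separately; both come from Hölder's inequality and the $A_p$ condition \eqref{Ap}.

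\textbf{Lower bound.} This uses the $A_p$ condition directly. Write
\[
|D|=\int_D w^{1/p}w^{-1/p}\,dx
\]
and apply Hölder's inequality with exponents $p$ and $p'$:
\[
|D|\le w(D)^{1/p}\Big(\int_D w^{-\frac{1}{p-1}}\,dx\Big)^{\frac{p-1}{p}}\le w(D)^{1/p}\Big(\int_B w^{-\frac{1}{p-1}}\,dx\Big)^{\frac{p-1}{p}}.
\]
Raise this to the power $p$. Definition \eqref{Ap} gives $\big(\int_B w^{-\frac1{p-1}}\,dx\big)^{p-1}\le [w]_p|B|^p/w(B)$. Combining the two,
\[
|D|^p\le [w]_p\,w(D)|B|^p/w(B),
\]
which is exactly the left inequality.

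\textbf{Upper bound.} This is the substantive part, and it relies on the reverse Hölder property of $A_p$ weights. The classical Gehring/Coifman--Fefferman argument gives $\varepsilon=\varepsilon(n,p,[w]_p)>0$ and $C$ such that
\[
\Big(\fint_B w^{1+\varepsilon}\,dx\Big)^{\frac1{1+\varepsilon}}\le C\fint_B w\,dx
\]
for every ball $B$. The key step in that argument is proving this reverse Hölder inequality. It goes as follows.
\begin{itemize}
\item First, the lower bound just proved gives a uniform doubling property and an ``$A_\infty$'' statement: if $|E|\le\beta|B|$, then $w(E)\le\gamma\,w(B)$ for some $\gamma<1$. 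This follows by applying the lower bound to $D=B\setminus E$.
\item Second, a Calderón--Zygmund decomposition of $w$ on $B$ at levels $\lambda>\fint_B w$ converts this into a good-$\lambda$ type distribution inequality for $w$.
\item Integrating that inequality in $\lambda$ yields the reverse Hölder bound.
\end{itemize}
I expect this to be the main obstacle. It is standard, though, and I would simply invoke it from the references, e.g.\ \cite{Heinonen}.

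Given the reverse Hölder inequality, apply Hölder with exponents $1+\varepsilon$ and $(1+\varepsilon)/\varepsilon$:
\[
w(D)\le |D|^{\frac{\varepsilon}{1+\varepsilon}}\Big(\int_B w^{1+\varepsilon}\,dx\Big)^{\frac1{1+\varepsilon}}\le C|D|^{\frac{\varepsilon}{1+\varepsilon}}|B|^{\frac1{1+\varepsilon}}\fint_B w\,dx = C\Big(\frac{|D|}{|B|}\Big)^{\frac{\varepsilon}{1+\varepsilon}}w(B).
\]
This is the right inequality with $\sigma=\varepsilon/(1+\varepsilon)$, where $\sigma$ and $C$ depend only on $n$, $p$ and $[w]_p$.
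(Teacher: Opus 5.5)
Your proof is correct and follows essentially the argument behind the paper's citation \cite[Lemma 15.5 and Lemma 15.8]{Heinonen}: the left inequality comes from H\"older's inequality combined with the $A_p$ condition, and the right one comes from the reverse H\"older inequality for $A_p$ weights followed by H\"older with exponents $1+\varepsilon$ and $(1+\varepsilon)/\varepsilon$, which gives $\sigma=\varepsilon/(1+\varepsilon)$. The paper does not reprove the lemma, so invoking the reverse H\"older inequality (proved via the Calder\'on--Zygmund decomposition on cubes and transferred to balls by doubling) matches the level of detail the paper relies on.
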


As a corollary of Lemma \ref{comlem}, we have the following doubling property for $A_p$ weights.
\begin{Lemma}\label{doubling}
Let $1<p<\infty$ and $w\in A_p$. Then there exists a constant $c_w=c_w(n,p,[w]_p)>0$, such that
\begin{equation*}
w(2B)\le c_w\,w(B)
\end{equation*}
for every ball $B\subset\mathbb{R}^n$.
\end{Lemma}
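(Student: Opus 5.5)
The doubling estimate follows directly from the left-hand inequality of Lemma~\ref{comlem}. Fix a ball $B=B_r(x_0)\subset\mathbb{R}^n$ and write $2B:=B_{2r}(x_0)$. Apply Lemma~\ref{comlem} with the larger ball $2B$ in the role of $B$ and with the measurable subset $D:=B\subset 2B$. The lower bound then gives
\begin{equation*}
\frac{1}{[w]_p}\left(\frac{|B|}{|2B|}\right)^p\le \frac{w(B)}{w(2B)}.
\end{equation*}

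The only computation needed is the ratio of Lebesgue measures. Since $|B|/|2B|=2^{-n}$, the display rearranges to
\begin{equation*}
w(2B)\le [w]_p\,2^{np}\,w(B).
\end{equation*}
This is the claim with $c_w:=2^{np}[w]_p$, which indeed depends only on $n$, $p$ and $[w]_p$.

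We should also check that the division is legitimate, that is, $w(B)>0$. This holds because $w$ is positive and locally integrable, so $0<w(B)\le w(2B)<\infty$. No other step presents any real difficulty; the sole point to handle correctly is choosing $2B$ as the ambient ball in Lemma~\ref{comlem} rather than $B$. If one preferred not to rely on Lemma~\ref{comlem}, the lower bound itself follows from H\"older's inequality applied to $\int_D w^{1/p}w^{-1/p}\,dx$ together with the definition \eqref{Ap} of $[w]_p$, and that route yields the same constant.
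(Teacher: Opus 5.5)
Your proof is correct and follows the paper's approach: the paper presents Lemma~\ref{doubling} as a direct corollary of Lemma~\ref{comlem}. Applying the lower bound of that lemma with $D=B$ inside the ambient ball $2B$ gives $w(2B)\le 2^{np}[w]_p\,w(B)$, exactly as you wrote.
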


The following result follows from \cite[Lemma 2.4]{Ok24}.
\begin{Lemma}\label{intlem}
For $B_r(x_0)\subset \R^ n$ and every $t\in\mathbb{R}$ such that $|t|\leq t_0$, the following hold:
\begin{itemize}
    \item [(i)] 
    $$
    \int_{B_r(x_0)}|x-x_0|^t\frac{w(x)}{w(B_{x,x_0})}\;dx~\backsimeq\frac{r^t}{t}
    ,\text{ for }t>0,
    $$

    \item [(ii)]
    $$
    \int_{\R^n\setminus B_r(x_0)}|x-x_0|^t\frac{w(x)}{w(B_{x,x_0})}\;dx~\backsimeq\frac{r^t}{|t|},\text{ for }t<0,
    $$
\end{itemize}
where the hidden constants depend only on $n,\;p,\;t_0$, and $[w]_p$.
\end{Lemma}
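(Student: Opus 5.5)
Lemma \ref{intlem} is quoted from \cite[Lemma 2.4]{Ok24}, but here is how I would prove it directly. The plan is a dyadic annulus decomposition around $x_0$, using that $w(B_{x,x_0})$ is comparable to $w(B_\rho(x_0))$ when $|x-x_0|\sim\rho$.

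\textbf{Step 1: comparability.} For $|x-x_0|=\rho$, the ball $B_{x,x_0}$ has radius $\rho/2$ and is contained in $B_\rho(x_0)$. By Lemma \ref{comlem} with $D=B_{x,x_0}$ and $B=B_\rho(x_0)$,
\[
2^{-np}[w]_p^{-1}\, w(B_\rho(x_0))\le w(B_{x,x_0})\le w(B_\rho(x_0)).
\]
So on the annulus $A_k:=B_{2^{k+1}r}(x_0)\setminus B_{2^k r}(x_0)$ we have $w(B_{x,x_0})\backsimeq w(B_{2^k r}(x_0))$, where doubling (Lemma \ref{doubling}) absorbs the factor $2$ in the radius. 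Likewise $|x-x_0|^t\backsimeq (2^k r)^t$ on $A_k$, with constants $2^{|t|}\le 2^{t_0}$.

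\textbf{Step 2: annulus weights.} We need $w(A_k)/w(B_{2^k r}(x_0))\backsimeq 1$. The upper bound follows from doubling. For the lower bound, apply Lemma \ref{comlem} with $D=A_k\subset B_{2^{k+1}r}(x_0)$: since $|D|/|B|=1-2^{-n}$, we get $w(A_k)\ge c\, w(B_{2^{k+1}r}(x_0))$. Summing over $k\ge0$, case (ii) becomes
\[
\int_{\R^n\setminus B_r(x_0)}|x-x_0|^t\frac{w(x)}{w(B_{x,x_0})}\,dx \backsimeq \sum_{k\ge0}(2^kr)^t = \frac{r^t}{1-2^t}.
\]
For $t<0$, $1-2^{t}=1-e^{t\ln2}$, and $1-2^{t}\backsimeq |t|$ uniformly for $|t|\le t_0$, since $(1-e^{-a})/a$ is bounded above and below on $(0,t_0\ln 2]$. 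This gives $r^t/|t|$.

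\textbf{Step 3: case (i).} Sum over $k\le -1$, i.e. over the annuli $B_{2^{k+1}r}\setminus B_{2^kr}$ inside $B_r(x_0)$. The point $x_0$ itself has measure zero. The sum is $\sum_{k\le -1}(2^kr)^t = r^t\,2^{-t}/(1-2^{-t})\backsimeq r^t/t$ by the same elementary estimate.

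The main point to watch is that every constant depends only on $n,p,[w]_p,t_0$, and in particular not on $k$ or $r$. This is ensured because each comparison goes through Lemma \ref{comlem} with fixed measure ratios ($2^{-n}$ and $1-2^{-n}$), which are scale-invariant.
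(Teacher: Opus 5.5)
Your proof is correct. The paper gives no argument of its own here; it simply quotes \cite[Lemma 2.4]{Ok24}. Your dyadic annulus decomposition is therefore a self-contained proof where the paper only gives a reference.

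Each step checks out:
\begin{itemize}
\item Since $B_{x,x_0}\subset B_{|x-x_0|}(x_0)$ with volume ratio $2^{-n}$, Lemma \ref{comlem} gives $w(B_{x,x_0})\backsimeq w(B_{|x-x_0|}(x_0))$.
\item Doubling and the lower bound on annuli give $\int_{A_k}|x-x_0|^t\frac{w(x)}{w(B_{x,x_0})}\,dx\backsimeq (2^kr)^t$ uniformly in $k$ and $r$.
\item The geometric sums $r^t/(1-2^t)$ and $r^t2^{-t}/(1-2^{-t})$ are comparable to $r^t/|t|$ with constants depending only on $t_0$, because $(1-e^{-a})/a$ is decreasing on $(0,t_0\ln 2]$.
\end{itemize}

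What your route adds beyond the citation is transparency about which properties of $w$ are actually used. Both comparisons you take from Lemma \ref{comlem} in fact follow from doubling alone:
\begin{itemize}
\item $B_{|x-x_0|}(x_0)\subset 3B_{x,x_0}$.
\item $A_k$ contains a ball of radius $2^{k-1}r$, centred at distance $\tfrac32 2^kr$ from $x_0$, whose $8$-fold dilate covers $B_{2^{k+1}r}(x_0)$.
\end{itemize}
So your argument shows the lemma holds for every doubling weight, with constants depending only on the doubling constant and $t_0$. The stated dependence on $n$, $p$, $[w]_p$ enters only through $c_w$.
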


The following embedding theorem follows from \cite[Theorem 1.2]{Fabes82}.
\begin{Theorem}[Weighted embedding Theorem]\label{WIT}
    Let $B_R:=B_R(x_0)\subset\R^n$ be a ball with $n\geq 2$. Suppose $1<p<\infty$ and $w\in A_p$. Then there exist positive constants $c$ and $\eta$, depending only on $n,\;p,\;[w]_p$, such that
    $$\bigg(\fint_{B_R} |u|^{kp}w\;dx\bigg)^\frac{1}{kp}\leq cR\bigg(\fint_{B_R}|\nabla u|^pw\;dx\bigg)^\frac{1}{p},$$
    for every $u \in C^\infty_0(B_R)$ and $1\leq k\leq \frac{n}{n-1}+\eta$.
\end{Theorem}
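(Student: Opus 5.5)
The statement is the weighted Sobolev inequality of Fabes--Kenig--Serapioni, so the plan is to reduce it to their Theorem 1.2 in \cite{Fabes82} and check that the normalization matches. The key object is the pointwise representation of a compactly supported function by the Riesz potential of its gradient:
\[
|u(x)|\le c(n)\int_{B_R}\frac{|\nabla u(y)|}{|x-y|^{n-1}}\,dy
\quad\text{for } u\in C_0^\infty(B_R).
\]
The argument then has three steps: an openness property of the $A_p$ class, a two-weight fractional integral bound, and a final reduction from large to small $k$.

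\textbf{Step 1: openness of $A_p$.} By the reverse H\"older inequality, $w\in A_p$ implies $w\in A_{p-\epsilon}$ for some $\epsilon=\epsilon(n,p,[w]_p)>0$. This self-improvement is what produces the exponent gain $\eta>0$ beyond $n/(n-1)$.

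\textbf{Step 2: the two-weight bound.} We need the estimate
\[
\Big(\frac{1}{w(B)}\int_B |I_1 g|^{kp}w\Big)^{1/(kp)}\le cR\Big(\frac{1}{w(B)}\int_B|g|^pw\Big)^{1/p},
\qquad I_1g(x)=\int_B\frac{g(y)\,dy}{|x-y|^{n-1}} .
\]
To prove it, first pass to the dyadic or maximal-function formulation, following the Sawyer / Chanillo--Wheeden two-weight criteria, as is done in \cite{Fabes82}. The relevant testing condition reads
\[
\frac{|Q|^{1/n}}{|Q|}\Big(\frac{w(Q)}{w(B)}\Big)^{1/(kp)}\Big(\int_Q w^{-1/(p-1)}\Big)^{(p-1)/p}\lesssim R\,w(B)^{-1/p}
\]
for subcubes $Q\subset B$. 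Using the $A_p$ condition, the left-hand side becomes
\[
\lesssim |Q|^{1/n}\, w(Q)^{1/(kp)-1/p}\, w(B)^{-1/(kp)} .
\]
Lemma \ref{comlem} gives $w(Q)/w(B)\ge [w]_p^{-1}(|Q|/|B|)^{p}$, and with $p$ replaced by $p-\epsilon$ from Step 1 this improves to $(|Q|/|B|)^{p-\epsilon}$. Substituting, the condition holds exactly when
\[
\frac1n\ge (p-\epsilon)\Big(\frac1p-\frac1{kp}\Big).
\]
This inequality is satisfied for all $k\le \frac{n}{n-1}+\eta$ with $\eta=\eta(\epsilon)>0$, since $k=n/(n-1)$ gives $1/n\ge (p-\epsilon)/(np)$.

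\textbf{Step 3: reduction for smaller $k$.} Once the estimate holds at the largest exponent $k_0=\frac{n}{n-1}+\eta$, Jensen's inequality with respect to the probability measure $w\,dx/w(B_R)$ yields it for every $1\le k\le k_0$, with the same constant.

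The main obstacle is Step 2, the two-weight Riesz potential bound with a strictly positive gain $\eta$. Controlling the local testing condition in the borderline regime requires the reverse H\"older / openness property from Step 1. In the paper itself this is simply quoted from \cite[Theorem 1.2]{Fabes82}, so the proof amounts to citing that theorem and noting the dependence $c,\eta=c,\eta(n,p,[w]_p)$.
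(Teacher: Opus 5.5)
Your proposal agrees with the paper, which gives no proof of its own and simply quotes \cite[Theorem 1.2]{Fabes82}; your conclusion that the argument amounts to that citation, with $c,\eta$ depending only on $n,p,[w]_p$, is exactly the paper's route. Your supplementary sketch is sound in outline, with two corrections: the condition you display in Step 2 is not Sawyer's testing condition but, after the $A_p$ step, the Chanillo--Wheeden balance condition (which does suffice for $k>1$ since $w\in A_p$), and the Fabes--Kenig--Serapioni argument predates both of those criteria, so it does not follow them as you state.
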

\begin{Remark}\label{exprmk}
We denote 
    \begin{align}\label{exponent}
        \kappa=\frac{n}{n-1}+\eta.
    \end{align}
    We also emphasize that when $w\equiv 1$, by the classical Sobolev embedding, one may choose $\kappa=\frac{n}{n-p}$ for $1<p<n$.
\end{Remark}
The following theorem can be found in \cite[Theorem 1.5]{Fabes82}.
\begin{Theorem}[Weighted Sobolev-Poincar\'{e} Inequality]\label{SPI}
    Let $B_R:=B_R(x_0)\subset\R^n$ be a ball with $n\geq 2$. There exists positive constants $c$ and $\theta$, depending on $n,p,[w]_p$, such that, for all Lipschitz function $u$ on $\overline{B_R}$ and for every $1\leq k\leq \frac{n}{n-1}+\theta$,
    $$\Big(\fint_{B_R}|u-u_{B_R}|^{kp}w\;dx\Big)^\frac{1}{kp}\leq cR\Big(\fint_{B_R}|\nabla u|^{p}w\;dx\Big)^\frac{1}{p}.$$
\end{Theorem}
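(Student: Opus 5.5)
The statement to be proved is the weighted Sobolev--Poincar\'e inequality of Theorem \ref{SPI}. The plan is to reduce it to the zero-boundary embedding of Theorem \ref{WIT} via a representation formula, since the mean-zero condition replaces the compact support assumption. Fix the ball $B_R$ and a Lipschitz $u$ on $\overline{B_R}$. By translation and dilation ($x\mapsto x_0+Rx$; the $A_p$ constant is invariant under this map), it suffices to treat $R=1$ with $B=B_1(0)$.

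\textbf{Step 1: a pointwise bound.} For a.e.\ $x\in B$, the classical potential estimate on convex domains gives
\[
|u(x)-u_{B}^{\mathrm{Leb}}|\le c(n)\int_B\frac{|\nabla u(y)|}{|x-y|^{n-1}}\,dy,
\]
where $u_B^{\mathrm{Leb}}$ is the unweighted average. I then replace this by the weighted average. Since
\[
\Big(\fint_B|u-u_{B}|^{kp}w\Big)^{1/kp}\le 2\inf_{a\in\mathbb{R}}\Big(\fint_B|u-a|^{kp}w\Big)^{1/kp},
\]
any constant $a$ may be used, so it is enough to prove the inequality with $u_B$ replaced by $u_B^{\mathrm{Leb}}$.

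\textbf{Step 2: a weighted bound for the Riesz potential.} The core step is to show that $I_1 g(x)=\int_B |g(y)|\,|x-y|^{1-n}\,dy$ satisfies
\[
\Big(\fint_B (I_1g)^{kp} w\Big)^{1/kp}\le c\Big(\fint_B |g|^p w\Big)^{1/p}
\]
for $g=|\nabla u|$ and $1\le k\le \frac{n}{n-1}+\theta$. This is exactly the two-weight fractional integral estimate that underlies Theorem \ref{WIT} in \cite{Fabes82}, where the $C_0^\infty$ embedding is derived from the same representation $|v(x)|\le c\,I_1|\nabla v|(x)$. So rather than redo it, I would invoke that potential estimate directly.

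If it has to be proved, the route would go through the following pieces. First, the reverse H\"older / openness property: $w\in A_p$ implies $w\in A_{p-\epsilon}$. Second, the doubling estimate of Lemma \ref{comlem}, which gives $w(B_r(x))\gtrsim r^{np}$ relative to $w(B)$ on subballs. Third, a dyadic decomposition of $I_1 g$ into annuli around $x$, using H\"older's inequality with $w^{-1/(p-1)}$ on each annulus and the $A_p$ condition to produce the gain. The extra $\theta>0$ comes precisely from the $A_{p-\epsilon}$ improvement. I expect this step to be the main obstacle, because it is a genuine harmonic-analysis estimate and not a consequence of the results stated earlier. Note that Theorem \ref{WIT} cannot be applied to $u-u_B$ directly, since that function does not vanish on $\partial B$.

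\textbf{Step 3: conclusion.} Combining Steps 1 and 2 gives
\[
\Big(\fint_B |u-u_B^{\mathrm{Leb}}|^{kp}w\Big)^{1/kp}\le c\Big(\fint_B|\nabla u|^pw\Big)^{1/p},
\]
and by the reduction in Step 1 the same holds with $u_B$ in place of $u_B^{\mathrm{Leb}}$. Rescaling back to $B_R$ produces the factor $R$. This yields Theorem \ref{SPI}, with $c$ and $\theta$ depending only on $n$, $p$, and $[w]_p$.
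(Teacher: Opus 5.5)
Your argument is correct and essentially matches the paper's treatment. The paper gives no proof of its own and quotes \cite[Theorem 1.5]{Fabes82}, which rests on exactly your ingredients: the representation formula $|u(x)-u_B^{\mathrm{Leb}}|\le c\int_B|\nabla u(y)|\,|x-y|^{1-n}\,dy$ and the weighted $L^p(w)\to L^{kp}(w)$ bound for the Riesz potential on $B$, the same estimate behind Theorem~\ref{WIT}. Your factor-$2$ argument correctly handles the switch from the Lebesgue mean to the weighted mean $u_{B_R}$.

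The one caveat concerns your optional sketch of the potential bound. H\"older's inequality with the $A_p$ condition on every dyadic annulus gives terms of size $\rho^{\,1-n(p-\epsilon)/p}$, which diverge as $\rho\to0$. You therefore also need Hedberg's split at a radius $\delta$: bound the inner annuli by $\delta$ times the Hardy--Littlewood maximal function of $g$, optimize $\delta$ pointwise, and then use Muckenhoupt's theorem that the maximal operator is bounded on $L^p(w)$.
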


The following iteration lemma can be found in \cite[Lemma 7.1]{Enrico}. 
\begin{Lemma}\label{seqconv}
    Let $\{A_j\}_{j=0}^{\infty}$ be a sequence of positive real numbers such that $$A_{j+1}\leq ab^jA_j^{1+\beta},$$ for some $a>0,\;\beta>0$ and $b>1$. If $A_0\leq a^{-\frac{1}{\beta}}b^{-\frac{1}{\beta^2}}$, then $A_j\to 0$ as $j\to\infty$.
\end{Lemma}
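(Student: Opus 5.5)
This is the classical iteration lemma (De Giorgi type), so I would prove it by a direct induction. The aim is a geometric decay bound of the form
\[
A_j\le b^{-\frac{j}{\beta}}A_0\qquad\text{for all } j\ge0 .
\]
Since $b>1$ and $\beta>0$, this bound immediately gives $A_j\to0$ as $j\to\infty$.

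The base case $j=0$ is trivial. For the inductive step, assume $A_j\le b^{-j/\beta}A_0$. The recursion together with the induction hypothesis gives
\[
A_{j+1}\le a b^j A_j^{1+\beta}\le a b^j\, b^{-\frac{j(1+\beta)}{\beta}} A_0^{1+\beta}= a\, b^{-\frac{j}{\beta}} A_0^{\beta}\, A_0 .
\]
To close the induction it therefore suffices to have
\[
a\,b^{-\frac{j}{\beta}}A_0^\beta\le b^{-\frac{j+1}{\beta}},\quad\text{that is,}\quad a A_0^\beta\le b^{-\frac1\beta}.
\]
The hypothesis $A_0\le a^{-1/\beta}b^{-1/\beta^2}$ is, after raising to the power $\beta$, exactly $A_0^\beta\le a^{-1}b^{-1/\beta}$. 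So the required condition holds, and the induction is complete.

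The only real step is guessing the right decay rate $b^{-j/\beta}$. This rate is chosen so that the factor $b^j$ is absorbed exactly by the extra power $\beta$ gained from $A_j^{1+\beta}$. The computation itself involves only exponents, and positivity of the $A_j$ ensures all powers are well defined. I expect no obstacle beyond the exponent bookkeeping $j-\frac{j(1+\beta)}{\beta}=-\frac{j}{\beta}$.
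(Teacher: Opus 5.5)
Your proof is correct: the inductive bound $A_j\le b^{-j/\beta}A_0$ closes exactly because the hypothesis on $A_0$ is equivalent to $aA_0^{\beta}\le b^{-1/\beta}$. The paper does not prove this lemma itself but quotes it from Giusti's book (Lemma 7.1 there), whose proof is this same induction, so your approach is the standard one.
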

The following result is taken from \cite[Lemma 1.1]{Enrico}.
\begin{Lemma}\label{iterativelem}
    Let $t_0\geq 0$ and $h:[t_0, t_1]\to [0,\infty)$ be a nonnegative bounded function such that for every $t_0\leq t<s\leq t_1$, $$h(t)\leq \theta h(s)+\frac{A}{(s-t)^\tau}+B,$$
    for some constants $\theta\in (0,1), \tau>0,\;A>0,\;B>0$. Then there exists a constant $C=C(\theta,\tau)>0$, such that $$h(\rho)\leq C\Big(\frac{A}{(R-\rho)^\tau}+B\Big),$$
    for every $t_0\leq \rho<R\leq t_1$.
\end{Lemma}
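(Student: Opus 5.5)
The plan is to prove the standard iteration lemma by applying the hypothesis along a geometric sequence of radii between $\rho$ and $R$. Boundedness of $h$ then kills the leftover term, and the accumulated error terms form a convergent geometric series.

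Fix $t_0\le\rho<R\le t_1$. Since $\theta\in(0,1)$, choose $\lambda\in(0,1)$ with $\theta\lambda^{-\tau}<1$; for instance $\lambda:=\theta^{1/(2\tau)}$, so that $\lambda^\tau=\theta^{1/2}>\theta$. Define
\begin{equation*}
\rho_0:=\rho,\qquad \rho_{i+1}:=\rho_i+(1-\lambda)\lambda^i(R-\rho),\qquad i\ge0 .
\end{equation*}
Then $\rho_i$ increases to $R$, every $\rho_i$ lies in $[\rho,R)\subset[t_0,t_1]$, and $\rho_{i+1}-\rho_i=(1-\lambda)\lambda^i(R-\rho)$.

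Applying the hypothesis with $t=\rho_i$ and $s=\rho_{i+1}$ gives
\begin{equation*}
h(\rho_i)\le \theta h(\rho_{i+1})+\frac{A}{(1-\lambda)^\tau\lambda^{i\tau}(R-\rho)^\tau}+B .
\end{equation*}
Iterating this $k$ times from $i=0$ yields
\begin{equation*}
h(\rho)\le \theta^k h(\rho_k)+\frac{A}{(1-\lambda)^\tau(R-\rho)^\tau}\sum_{i=0}^{k-1}\big(\theta\lambda^{-\tau}\big)^i+B\sum_{i=0}^{k-1}\theta^i .
\end{equation*}

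Since $h$ is bounded on $[t_0,t_1]$, we have $\theta^k h(\rho_k)\le\theta^k\sup h\to0$ as $k\to\infty$. Both series converge because $\theta\lambda^{-\tau}<1$ and $\theta<1$. Letting $k\to\infty$ gives the claim with
\begin{equation*}
C:=\max\Big\{(1-\lambda)^{-\tau}\big(1-\theta\lambda^{-\tau}\big)^{-1},\,(1-\theta)^{-1}\Big\},
\end{equation*}
which depends only on $\theta$ and $\tau$ through the choice of $\lambda$.

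The only real point is the choice of $\lambda$. The gaps $\rho_{i+1}-\rho_i$ must shrink slowly enough that the amplification factor $\lambda^{-i\tau}$ in the $A$-term is beaten by the damping factor $\theta^i$. The boundedness assumption on $h$ is used exactly once, to discard the term $\theta^k h(\rho_k)$.
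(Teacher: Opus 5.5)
Your proof is correct and takes essentially the same approach as the source: the paper gives no proof of its own and simply cites \cite[Lemma 1.1]{Enrico}, whose proof also uses radii $\rho_{i+1}=\rho_i+(1-\lambda)\lambda^i(R-\rho)$ with $\lambda$ chosen so that $\theta\lambda^{-\tau}<1$, iterates, and lets $k\to\infty$ using the boundedness of $h$ to discard $\theta^k h(\rho_k)$. Your explicit choice $\lambda=\theta^{1/(2\tau)}$ makes the dependence $C=C(\theta,\tau)$ fully concrete.
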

We begin by recalling the following Krylov--Safonov type \cite{KSc} covering lemma for a doubling measure $w$. We use the version which can be proved similarly as in \cite[Lemma 7.2]{JN}.
\begin{Lemma}\label{covering}
    Let $w$ be a doubling measure. Let $E\subset B_r(x_0)$ be a $w$-measurable set and $\delta\in(0,1)$. Define $$E_\delta:=\cup_{0<\rho\leq\frac{2r}{3}}\{B_r(x_0)\cap B_{3\rho}(x): x\in B_r(x_0), w(E\cap B_{3\rho}(x))>\delta w(B_\rho(x))\}.$$ Then, either $E_\delta=B_r(x_0)$ or $w(E_\delta)\geq \frac{w(E)}{\delta c(w)}$, where $c(w)$ is the doubling constant for the measure $w$.
\end{Lemma}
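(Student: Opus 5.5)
The plan is to adapt the classical Krylov--Safonov / Calderón--Zygmund covering argument to the doubling measure $\mu:=w\,dx$, using only the doubling property from Lemma~\ref{doubling} together with the Lebesgue differentiation theorem for doubling measures. Suppose $E_\delta\neq B_r(x_0)$. We may assume $w(E)>0$, since otherwise there is nothing to prove. We will show that $E\subset E_\delta$ up to a $w$-null set, and we will then enlarge that inclusion quantitatively.

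\textbf{Step 1 (density points lie in $E_\delta$).} By Lebesgue differentiation for the doubling measure $w$, almost every $x\in E$ satisfies $w(E\cap B_{3\rho}(x))/w(B_{3\rho}(x))\to1$ as $\rho\to0$. Since $w(B_{3\rho}(x))\ge w(B_\rho(x))$, for small $\rho$ we get $w(E\cap B_{3\rho}(x))>\delta w(B_\rho(x))$, and hence $x\in E_\delta$. Therefore $w(E\setminus E_\delta)=0$.

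\textbf{Step 2 (maximal balls).} For each $x\in E_\delta\cap B_r(x_0)$, consider the admissible radii $\rho\le 2r/3$ with $w(E\cap B_{3\rho}(x))>\delta w(B_\rho(x))$. If some admissible $\rho$ had $B_{3\rho}(x)\supset B_r(x_0)$, then $E_\delta=B_r(x_0)$, contrary to our assumption. Using this, together with the continuity of $\rho\mapsto w(B_\rho(x))$ and $\rho\mapsto w(E\cap B_{3\rho}(x))$, we pick for each such point a radius $\rho_x$ that is nearly maximal. For this radius the ball $B_{3\rho_x}(x)\cap B_r(x_0)\subset E_\delta$, while the reverse inequality $w(E\cap B_{3\rho'}(x))\le\delta w(B_{\rho'}(x))$ holds at a comparable larger radius $\rho'$, for instance $\rho'\in[\rho_x,2\rho_x]$. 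The boundary case $\rho_x$ near $2r/3$ has to be handled separately. In that case $B_{3\rho}(x)$ with $x\in B_r(x_0)$ already covers most of $B_r(x_0)$, and this is where the restriction $\rho\le 2r/3$ enters. We would argue that maximality at scale $2r/3$ again gives $w(E\cap B_{3\rho_x}(x))\lesssim\delta w(B_{\rho_x}(x))$ after one application of doubling.

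\textbf{Step 3 (Vitali covering and summation).} Apply the Vitali $5r$-covering lemma to the family $\{B_{\rho_x}(x)\}$ to extract a disjoint subfamily $\{B_{\rho_i}(x_i)\}$ whose fivefold enlargements cover $E_\delta\supset E$ (a.e.). Then
\[
w(E)\le\sum_i w\big(E\cap B_{5\rho_i}(x_i)\big)\le\sum_i w\big(E\cap B_{6\rho_i}(x_i)\big)\le\delta\sum_i w\big(B_{2\rho_i}(x_i)\big),
\]
by the reverse inequality at radius $2\rho_i$. By doubling this is at most $\delta c(w)\sum_i w(B_{\rho_i}(x_i))$. Since the balls are disjoint and $B_{\rho_i}(x_i)\cap B_r(x_0)\subset E_\delta$, the last sum is bounded by $w(E_\delta)$, provided each $B_{\rho_i}(x_i)$ carries a fixed fraction of its $w$-mass inside $B_r(x_0)$. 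That proviso holds because $x_i\in B_r(x_0)$ and $\rho_i\le 2r/3$, so by doubling $w(B_{\rho_i}(x_i)\cap B_r(x_0))\ge c\,w(B_{\rho_i}(x_i))$. Absorbing all the doubling factors into $c(w)$ yields $w(E_\delta)\ge w(E)/(\delta c(w))$.

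The main obstacle is the bookkeeping in Steps 2--3. The radius $3\rho$ in the definition of $E_\delta$ must be matched with the Vitali factor $5$ and with the reverse inequality. This is done by choosing $\rho_i$ so that failure occurs at $2\rho_i$, which gives $6\rho_i\ge5\rho_i$. One also has to treat the intersection with $B_r(x_0)$ carefully near the boundary of the ball. Everything else is a routine use of doubling, and the constant $c(w)$ ends up as a fixed power of the doubling constant.
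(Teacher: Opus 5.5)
Your proposal is correct and follows the standard Krylov--Safonov/Vitali argument that the paper defers to (Lemma 7.2 of the cited reference). The steps are the same: density points of $E$ lie in $E_\delta$; a nearly maximal admissible radius $\rho_x$ with failure at $2\rho_x$; the $5r$-covering; and doubling. The boundary case $\rho_x>r/3$ closes as you suggest: failure at $2r/3$ together with $E\subset B_r(x_0)\subset B_{2r}(x)$ gives $w(E\cap B_{5\rho_x}(x))\le \delta w(B_{2\rho_x}(x))$. The intersection with $B_r(x_0)$ is handled by the Euclidean fact that $B_{\rho_i}(x_i)\cap B_r(x_0)$ contains a ball of radius $\rho_i/2$.

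The only discrepancy with the statement as written is that your constant comes out as $c_w^3$ rather than $c_w$. This is harmless, since $c_w^3$ is itself a doubling constant, and in Lemma \ref{WHI0} one simply takes $\epsilon=1/(2c_w^3)$.
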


\section{Preliminary results}
\subsection{Energy and tail estimates}
In this subsection, we establish several energy estimates and a tail estimate. We first prove that positive truncations of weak subsolutions are again weak subsolutions. This fact is a key ingredient in the proof of the local Hölder continuity result in Theorem \ref{cty}.
\begin{Lemma}\label{Subsolution}
Let $u$ be a weak subsolution of \eqref{ME}, then $u_+$ is a weak subsolution of 
    \begin{align}\label{SS}
     \mathcal{M}_\alpha\,u-|g||u|^{p-2}u=|f|\text{ in }\Omega.
\end{align}
\end{Lemma}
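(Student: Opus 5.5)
We first check that $u_+\in X_\alpha$: truncation preserves $W^{1,p}_{\mathrm{loc}}(\Omega,w)$ with $\nabla u_+=\chi_{\{u>0\}}\nabla u$, and $|u_+|\le|u|$ gives $u_+\in L^{p-1}_{ps}(\mathbb{R}^n,w)$. Writing \eqref{SS} in the form of Definition \ref{def}, we must show that for every $\Omega'\Subset\Omega$ and every nonnegative $\varphi\in W^{1,p}_0(\Omega',w)$,
\begin{align*}
\int_\Omega\mathcal{A}(x,\nabla u_+)\cdot\nabla\varphi\,dx+\alpha\int_{\R^n}\int_{\R^n}\frac{|u_+(x)-u_+(y)|^{p-2}(u_+(x)-u_+(y))(\varphi(x)-\varphi(y))}{|x-y|^{sp}}K(x,y)\,dx\,dy\le\int_\Omega\big(|f|+|g|u_+^{p-1}\big)\varphi\,dx.
\end{align*}
Following the unweighted argument for mixed equations (cf. \cite{GK}), we test the subsolution inequality for $u$ with
\[
\varphi_\varepsilon:=\varphi\,\psi_\varepsilon(u),\qquad \psi_\varepsilon(t):=\min\Big\{\frac{t_+}{\varepsilon},1\Big\},
\]
which is nonnegative, belongs to $W^{1,p}_0(\Omega',w)$ (it is bounded by $\varphi$, with $\nabla\varphi_\varepsilon=\psi_\varepsilon(u)\nabla\varphi+\varphi\varepsilon^{-1}\chi_{\{0<u<\varepsilon\}}\nabla u$), and is therefore admissible. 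We then let $\varepsilon\to0$.

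\textbf{Local term.} By (A2), $\int\mathcal{A}(x,\nabla u)\cdot\nabla\varphi_\varepsilon\ge\int\psi_\varepsilon(u)\mathcal{A}(x,\nabla u)\cdot\nabla\varphi$, since the extra piece $\varepsilon^{-1}\varphi\chi_{\{0<u<\varepsilon\}}\mathcal{A}(x,\nabla u)\cdot\nabla u$ is nonnegative. Next, $\psi_\varepsilon(u)\to\chi_{\{u>0\}}$ pointwise and (A3) gives the dominating function $C_2|\nabla u|^{p-1}|\nabla\varphi|w\in L^1$. By dominated convergence this part tends to $\int_{\{u>0\}}\mathcal{A}(x,\nabla u)\cdot\nabla\varphi=\int\mathcal{A}(x,\nabla u_+)\cdot\nabla\varphi$, using (A1) with $c=0$ on $\{u\le0\}$.

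\textbf{Right-hand side.} By \eqref{F}, $\mathcal{F}(x,u)\varphi_\varepsilon\le(|f|+|g||u|^{p-1})\varphi\,\psi_\varepsilon(u)$. Since $\psi_\varepsilon(u)$ vanishes on $\{u\le0\}$, this is at most $(|f|+|g|u_+^{p-1})\varphi$, and that bound holds uniformly in $\varepsilon$.

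\textbf{Nonlocal term (main obstacle).} We must show
\[
\liminf_{\varepsilon\to0}\iint |u(x)-u(y)|^{p-2}(u(x)-u(y))(\varphi_\varepsilon(x)-\varphi_\varepsilon(y))\,d\mu\ \ge\ \iint |u_+(x)-u_+(y)|^{p-2}(u_+(x)-u_+(y))(\varphi(x)-\varphi(y))\,d\mu,
\]
where $d\mu=K|x-y|^{-sp}dx\,dy$. The integrands converge pointwise: the left integrand tends to $J(u(x)-u(y))(\varphi(x)\chi_{\{u(x)>0\}}-\varphi(y)\chi_{\{u(y)>0\}})$ with $J(t)=|t|^{p-2}t$. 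We compare this limit with the right-hand integrand case by case:
\begin{itemize}
\item[(i)] If $u(x),u(y)>0$, the two integrands are equal.
\item[(ii)] If $u(x),u(y)\le0$, both vanish.
\item[(iii)] If $u(x)>0\ge u(y)$, the limit is $J(u(x)-u(y))\varphi(x)\ge J(u_+(x))\varphi(x)$, because $u(x)-u(y)\ge u(x)=u_+(x)-u_+(y)$ and $\varphi\ge0$.
\item[(iv)] The case $u(y)>0\ge u(x)$ is symmetric.
\end{itemize}
To pass to the limit we need a uniform integrable lower bound, since Fatou's lemma requires one. For all $\varepsilon$ we have $|\varphi_\varepsilon(x)-\varphi_\varepsilon(y)|\le|\varphi(x)-\varphi(y)|+\varphi(y)|\psi_\varepsilon(u(x))-\psi_\varepsilon(u(y))|$. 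The second piece is the dangerous one, because $\psi_\varepsilon$ has Lipschitz constant $1/\varepsilon$. So instead we argue with signs: on the set where $\psi_\varepsilon(u(x))\ne\psi_\varepsilon(u(y))$, the monotonicity of $\psi_\varepsilon$ makes $J(u(x)-u(y))(\psi_\varepsilon(u(x))-\psi_\varepsilon(u(y)))\ge0$. This allows the decomposition
\[
\varphi_\varepsilon(x)-\varphi_\varepsilon(y)=\psi_\varepsilon(u(x))(\varphi(x)-\varphi(y))+\varphi(y)\big(\psi_\varepsilon(u(x))-\psi_\varepsilon(u(y))\big).
\]
The first summand is dominated by $|u(x)-u(y)|^{p-1}|\varphi(x)-\varphi(y)|$, which is $\mu$-integrable by Lemma \ref{lemmatail}. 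The second summand is nonnegative after multiplication by $J(u(x)-u(y))$. Applying dominated convergence to the first and Fatou's lemma to the second yields the liminf inequality with limit integrand as above, and the case analysis then gives the claim. Combining the three parts, with $\alpha>0$, proves Lemma \ref{Subsolution}.
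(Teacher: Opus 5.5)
Your proposal is correct and follows the paper's proof. The paper tests with exactly $\varphi\,u_m$, where $u_m=\min\{m u_+,1\}$ is your $\psi_\varepsilon(u)$ with $m=1/\varepsilon$. It then discards the nonnegative local piece via (A2) and lets $m\to\infty$.

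For the nonlocal term, the paper only points to the unweighted Garain--Kinnunen argument and dominated convergence. Your splitting into a dominated part plus a sign-definite part handled by Fatou's lemma, followed by the four-case pointwise comparison, makes that step explicit. Likewise, your direct bound of the right-hand side by $(|f|+|g|u_+^{p-1})\varphi$ sidesteps the limit the paper takes there.
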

\begin{proof}
     We define $u_m(x):=\min\{m(u(x))_+,1\}$, which converges to $\chi_{\mathrm{supp}(u_+)}$ pointwise almost everywhere in $\Omega$. Let $\Omega'\Subset\Omega$ and $\varphi\in W^{1,p}_{0}(\Omega',w)$ be a nonnegative function. Since $u$ is a weak subsolution of \eqref{ME}, incorporating $\psi=\varphi u_m$ as a test function in \eqref{wksoleqn} and using the property \eqref{F}, we get
\begin{align}\label{S11}
    I_1+\alpha\,I_2+I_3\leq I_4,
\end{align}
where $$I_1:=\int_{\Omega'}\mathcal{A}(x,\nabla u)\cdot\nabla(\varphi u_m)\;dx,$$
$$I_2:=\int_{\R^n}\int_{\R^n}\frac{|u(x)-u(y)|^{p-2}(u(x)-u(y))(u_m(x)\varphi(x)-u_m(y)\varphi(y))}{|x-y|^{sp}} K(x,y)\;dx\;dy,$$
$$I_3:=-\int_{\Omega'} |g||u|^{p-1}u_m\varphi \;dx\quad\mbox{ and } \quad I_4:=\int_{\Omega'}|f|u_m\varphi\;dx.$$
Using the definition of $u_m$ and the properties of $\mathcal{A}$, we deduce that 
\begin{align}\label{S12}
    I_1&\geq \int_{\Omega'}u_m \mathcal{A}(x,\nabla u) \cdot\nabla\varphi\;dx+\int_{\Omega'}\varphi \mathcal{A}(x,\nabla u) \cdot\nabla u_m\;dx\nonumber\\
    &{\geq }\int_{\Omega'}u_m \mathcal{A}(x,\nabla u)  \cdot\nabla\varphi\;dx+mC_1\int_{\{x\in\Omega':u_+(x)\leq \frac{1}{m}\}}\varphi|\nabla u_+|^{p}~ w\;dx\nonumber\\
    &\geq \int_{\Omega'}u_m\mathcal{A}(x,\nabla u)\cdot\nabla\varphi\;dx.
\end{align}
Along the lines of proof of \cite[estimate (2.11)-(2.12), page 5398]{GK} and using the Lebesgue's dominated convergence theorem, we deduce that 
\begin{align}
    \lim_{m\to\infty}I_2\geq \int_{\R^n}\int_{\R^n}\frac{|u_+(x)-u_+(y)|^{p-2}(u_+(x)-u_+(y))(\varphi(x)-\varphi(y))}{|x-y|^{sp}} K(x,y)\;dx\;dy.
\end{align}
Using the fact $q>\kappa'$, we obtain that $g|u|^{p-1}\varphi\in L^1(\Omega')$ and hence by the Lebesgue's dominated convergence theorem, we get 
\begin{align}
    \int_{\Omega'} |g||u|^{p-1}u_m\varphi \;dx\to \int_{\Omega'} |g| u_+^{p-1}\varphi \;dx \text{ as } m\to\infty,
\end{align}
and 
\begin{align}\label{S13}
    \int_{\Omega'} |f|u_m\varphi\;dx\to \int_{\Omega'} |f|\chi_{\mathrm{supp}(u_+)}\varphi\;dx\text{ as }m\to\infty.
\end{align}
Utilizing (\ref{S12})-(\ref{S13}) in (\ref{S11}) and taking $m\to\infty$, we deduce that
\begin{align}
  &\int_{\Omega'} \mathcal{A}(x,\nabla u_+)\cdot\nabla\varphi\;dx \nonumber\\
  &+\alpha\,\int_{\R^n}\int_{\R^n}\frac{|u_+(x)-u_+(y)|^{p-2}(u_+(x)-u_+(y))(\varphi(x)-\varphi(y))}{|x-y|^{sp}} K(x,y)\;dx\;dy\nonumber\\
  &{-}\int_{\Omega'} |g|u_+^{p-1}\varphi \;dx\leq \int_{\Omega'}|f|\varphi\;dx. 
\end{align}
Consequently, $u_+$ is a weak subsolution of (\ref{SS}).
\end{proof}

\begin{Remark}\label{rmksub}
If $u$ is a weak supersolution of \eqref{ME}, then we observe that $v=-u$ is a weak subsolution of
$$
\mathcal{M}_\alpha v=\mathcal{G}(x,v)\text{ in }\Omega,
$$
where $\mathcal{G}(x,v):=-\mathcal{F}(x,-v)$ with $\mathcal{G}$ satisfying \eqref{F}. Then by Lemma \ref{Subsolution}, it follows that $v_+=u_-$ is also a weak subsolution of \eqref{SS}.
\end{Remark}

The following energy estimate for weak subsolutions will be used repeatedly in the proofs of our main results.

\begin{Lemma}[Energy estimate for subsolutions]\label{Energy}
Suppose $B_r(x_0)\Subset \Omega$ and $\psi\in C^\infty_c(B_r(x_0))$ is a nonnegative function. Then there exists a constant $C=C(p,\Lambda,\alpha,C_1,C_2)>0$ such that
\begin{equation}\label{subsupeng}
\begin{split}
    \fint_{B_r(x_0)} \psi^p|\nabla v_{\pm}|^p\;w~dx&\leq C\Bigg\{ \fint_{B_r(x_0)} v_{\pm}^p|\nabla \psi|^p\;w\;dx\nonumber\\
    &+\fint_{B_r(x_0)}\int_{B_r(x_0)}\Big(\frac{|\psi(x)-\psi(y)|\max\{v_{\pm}(x),v_{\pm}(y)\}}{|x-y|^{s}}\Big)^p\frac{w(x)w(y)}{w(B_{x,y})}\;dx\;dy\nonumber\\
    &+\Big(\sup_{y\in \supp \,\psi}\int_{\R^n\setminus B_r(x_0)}\frac{(v(x))_{\pm}^{p-1}}{|x-y|^{sp}}\frac{w(x)}{w(B_{x,y})}\;dx\Big)\fint_{B_r(x_0)}v_{\pm}\psi^p w\;dx\Bigg\}\\
    &\qquad+\fint_{B_r(x_0)}|g||u|^{p-1}v_{\pm}\psi^p\;dx+\fint_{B_r(x_0)}|f|v_{\pm}\psi^p\;dx,
\end{split}
\end{equation}
holds for $v_{+}=(u-k)_{+}$ if $u$ is a weak subsolution and for $v_-=(u-k)_-$ if $u$ is a weak supersolution of \eqref{ME} with any $k\in\R$.
\end{Lemma}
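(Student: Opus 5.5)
We test the inequality with $\varphi=v_+\psi^p$, where $v_+=(u-k)_+$ and $u$ is a weak subsolution; the supersolution case follows by applying this to $-u$ via Remark \ref{rmksub}, since $(-u-(-k))_+=(u-k)_-$. Since $\psi\in C_c^\infty(B_r(x_0))$ and $u\in W^{1,p}_{\mathrm{loc}}(\Omega,w)$, the function $\varphi$ is nonnegative and lies in $W^{1,p}_0(B_r(x_0),w)$, so it is admissible in \eqref{wksoleqn} with $\Omega'=B_r(x_0)$. Testing gives three pieces: the local term $L$, the nonlocal term $\alpha N$, and the right-hand side $\int \mathcal F(x,u)\varphi\,dx$. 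We bound the right-hand side with \eqref{F}, $|\mathcal F(x,u)|\varphi\le (|f|+|g||u|^{p-1})v_+\psi^p$. This produces exactly the last two terms of the claimed estimate. At the end we divide everything by $w(B_r(x_0))$ to pass to averages.

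\textbf{Local term.} On $\{u>k\}$ we have $\nabla v_+=\nabla u$, so
\[
L=\int \psi^p\mathcal A(x,\nabla u)\cdot\nabla v_+\,dx+p\int \psi^{p-1}v_+\mathcal A(x,\nabla u)\cdot\nabla\psi\,dx .
\]
By (A2), the first integral is at least $C_1\int\psi^p|\nabla v_+|^pw$. By (A3) and Young's inequality, the second is bounded in absolute value by
\[
\varepsilon\int\psi^p|\nabla v_+|^pw+C(\varepsilon)\int v_+^p|\nabla\psi|^pw .
\]
Choosing $\varepsilon=C_1/2$ absorbs the gradient part into the left-hand side.

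\textbf{Nonlocal term.} This follows the standard argument of \cite{DKPhar}, with the kernel bounded above by $\Lambda\, w(x)w(y)/w(B_{x,y})$ from \eqref{kernal}. We split $\mathbb R^n\times\mathbb R^n$ into $B_r\times B_r$ and the two symmetric off-diagonal regions $B_r\times(\mathbb R^n\setminus B_r)$.

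On $B_r\times B_r$ we use the pointwise inequality
\[
|a-b|^{p-2}(a-b)\bigl(a_+\psi_1^p-b_+\psi_2^p\bigr)\ \ge\ -C\max\{a_+,b_+\}^p|\psi_1-\psi_2|^p
\]
for $a=u(x)-k$, $b=u(y)-k$, $\psi_1=\psi(x)$, $\psi_2=\psi(y)$. One checks it case by case: if $a,b\le k$ the left side is zero; if $a\ge b$ it is handled by monotonicity plus Young; the remaining cases are symmetric. This gives $N_{B_r\times B_r}\ge -C\times$ (the second term of the claim). Because the constant in the lemma is not allowed to depend on $\lambda$, we keep only this lower bound and discard the positive part, which would require $\lambda>0$ to exploit.

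On $x\in B_r$, $y\notin B_r$ we have $\varphi(y)=0$, and
\[
|u(x)-u(y)|^{p-2}(u(x)-u(y))\,v_+(x)\ \ge\ -(u(y)-k)_+^{p-1}v_+(x),
\]
since a negative sign can only occur when $u(y)>u(x)>k$. Integrating and bounding the $y$-integral by its supremum over $x\in\supp\psi$ yields the tail term times $\int v_+\psi^pw$; the factor $2$ from symmetry goes into $C$. The variable names here are swapped relative to the statement, but the quantity is the same.

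\textbf{Main obstacle.} The main difficulty is the pointwise inequality on $B_r\times B_r$. It has to be proved cleanly for every $p>1$ with constant depending only on $p$, including the degenerate case $p<2$, where we would use $\psi_1^p-\psi_2^p\le p\max\{\psi_1,\psi_2\}^{p-1}|\psi_1-\psi_2|$ and Young's inequality.
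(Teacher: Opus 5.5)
Your proposal is correct and follows the paper's proof: test with $\varphi=v_+\psi^p$, handle the local part with (A2), (A3) and Young's inequality, and handle the nonlocal part by the usual splitting into $B_r\times B_r$ and the off-diagonal tail region (the paper quotes these two standard estimates and keeps the positive nonlocal term you discard, which is harmless). You then conclude with $K\le\Lambda\, w(x)w(y)/w(B_{x,y})$, \eqref{F}, and the reduction $u\mapsto -u$ for supersolutions. One small point, shared with the paper's own argument: after absorbing with $\varepsilon=C_1/2$ and dividing, the $f$- and $g$-terms carry the factor $2/C_1$, not $1$, so they do not come out ``exactly'' as written. With coefficient exactly $1$ the estimate can fail when $C_1<1$ (e.g.\ for $K\equiv 0$), so these terms should be read as sitting inside $C$; this is harmless for all later uses of the lemma.
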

\begin{proof}
First, we prove the result for weak subsolutions. To this end, let $u$ be a weak subsolution of \eqref{ME}, then incorporating $\varphi=v_+\psi^p$ in \eqref{wksoleqn}, we get
    \begin{equation}\label{eng1}
        I_1+\alpha\,I_2\leq I_3,
    \end{equation}
    where $$I_1:=\int_{\Omega}\mathcal{A}(x,\nabla u)\cdot\nabla\varphi\;dx,$$
    $$I_2:=\int_{\R^n}\int_{\R^n}\frac{|u(x)-u(y)|^{p-2}(u(x)-u(y))(\varphi(x)-\varphi(y))}{|x-y|^{sp}}K(x,y)\;dx\;dy,$$
    \text{ and }
    $$I_3:=\int_{\Omega}\mathcal{F}(x,u)\varphi\;dx.$$
Proceeding along the lines of proofs of \cite[Proposition 3.1, page 14]{Verena21}, we get
\begin{equation}\label{eng1I1}
\begin{split}
I_1\geq C\int_{B_r(x_0)}\psi^p|\nabla v_+|^pw\;dx-C\int_{B_r(x_0)}v_{+}^p|\nabla \psi|^pw\;dx,
\end{split}
\end{equation}
for some constant $C=C(p,C_1,C_2)>0$. Along the lines of the proof of \cite[{Theorem 1.4, estimates (3.2) and (3.4) on pages 1286-1287}]{DKPahp}, we obtain
and 
\begin{equation}\label{eng1I2}
\begin{split}
    I_2&\geq \frac{1}{2}\int_{B_r(x_0)}\int_{B_r(x_0)}\frac{|v_+(x)-v_+(y)|^p\max\{\psi(x),\psi(y)\}^p}{|x-y|^{sp}}K(x,y)\;dx\;dy\\
&-C\int_{B_r(x_0)}\int_{B_r(x_0)}\frac{|\psi(x)-\psi(y)|^p\max\{v_+(x),v_+(y)\}^p}{|x-y|^{sp}}K(x,y)\;dx\;dy\\
&-{C\Big(\sup_{y\in\mathrm{supp}\,\psi}\int_{\R^n\setminus B_r(x_0)}\frac{|v_+(x)|^{p-1}}{|x-y|^{sp}}\frac{w(x)}{w(B_{x,y})}\;dx\Big)\int_{B_r(x_0)}v_+(y)\psi(y)^p w(y)\;dy},
\end{split}
\end{equation}
for some constant $C=C(p,\Lambda,\alpha)>0$. Combining the above two estimates \eqref{eng1I1} and \eqref{eng1I2} in \eqref{eng1} together with the fact $0\leq K(x,y)\leq \frac{\Lambda w(x)w(y)}{w(B_{x,y})}$ {and the hypothesis \eqref{F}}, we obtain the desired estimate \eqref{subsupeng}.

For weak supersolutions, the result holds by observing the fact that, if $u$ is a weak supersolution of \eqref{ME}, then $v=-u$ is a weak subsolution of 
\begin{equation*}\label{-u}
\mathcal{M}_\alpha\,v=\mathcal{G}(x,v)\text{ in }\Omega,
\end{equation*}
where $\mathcal{G}(x,v):=-\mathcal{F}(x,-v)$, with $\mathcal{G}$ satisfying \eqref{F}. 
\end{proof}
The following energy estimate for weak supersolutions is a key ingredient in the proof of the reverse H\"older inequality, which subsequently leads to the weak Harnack inequality established in Theorem \ref{wkHarnack}.
\begin{Lemma}[Energy estimate for supersolutions]\label{EWH}
    Suppose $u$ is a weak supersolution of \eqref{ME} such that $u\geq 0$ in $B_R(x_0)\Subset\Omega$. Let $0<r\leq \frac{3R}{4}$ and $\psi\in C^\infty_c(B_r(x_0))$ {be a nonnegative function}. Then for any $1<\eta<p$ and any $d>0$, there exists a constant $C=C(n,p,\Lambda,\alpha,C_1,C_2,[w]_p)>0$ such that
    \begin{equation*}
    \begin{split}
        &\int_{B_r(x_0)}\psi^p|\nabla v|^p~w\;dx\leq C\Bigg[\frac{(p-\eta)^p}{(\eta-1)^{p}}\int_{B_r(x_0)}v^p|\nabla\psi|^pw\;dx\\
        &+ \frac{(p-\eta)^p}{(\eta-1)^p}\int_{B_r(x_0)}\int_{B_r(x_0)} {\frac{|\psi(x)-\psi(y)|^p}{|x-y|^{sp}}\max\{v(x),v(y)\}^pK(x,y)\;dx\;dy}\\
        &+\frac{(p-\eta)^p}{(\eta-1)}\Big\{\Big(\sup_{x\in\supp\,\psi}\int_{\R^n\setminus B_r(x_0)}\frac{1}{|x-y|^{sp}}\frac{w(y)}{w(B_{x,y})}\;dy+d^{1-p}R^{-p}\Tail^{p-1}(u_-;x_0,R)\Big)\\
        &\times
    \int_{B_r(x_0)}\psi^pv^pw\;dx+\int_{B_r(x_0)}|g|u^{p-1}\psi^p(u+d)^{1-\eta}\;dx+\int_{B_r(x_0)}|f|\psi^p(u+d)^{1-\eta}\;dx\Big\}\Bigg]
    \end{split}
    \end{equation*}
    with $v=(u+d)^\frac{p-\eta}{p}$. Here, $\Tail$ is given in \eqref{tail}.
\end{Lemma}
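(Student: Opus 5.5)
The plan is to test the weak formulation \eqref{wksoleqn} for the supersolution $u$ with $\varphi=(u+d)^{1-\eta}\psi^p$, which is nonnegative, supported in $B_r(x_0)$, and belongs to $W^{1,p}_0(B_r(x_0),w)$ since $u+d\ge d>0$ on $B_r(x_0)$. This gives
\[
I_1+\alpha I_2\ge \int_{B_r(x_0)}\mathcal{F}(x,u)\varphi\,dx\ge -\int_{B_r(x_0)}\big(|f|+|g|u^{p-1}\big)\psi^p(u+d)^{1-\eta}\,dx ,
\]
using \eqref{F} and $u\ge0$ in $B_R(x_0)$. This already accounts for the last two terms of the claim. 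I then bound $I_1$ from above and $I_2$ from above by the favourable and error terms, and move everything to one side.

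\textbf{Local part.} We have $\nabla\varphi=(1-\eta)(u+d)^{-\eta}\psi^p\nabla u+p\psi^{p-1}(u+d)^{1-\eta}\nabla\psi$. By (A2), the first piece contributes $-(\eta-1)C_1\int(u+d)^{-\eta}\psi^p|\nabla u|^pw$. By (A3) and Young's inequality with parameter $\varepsilon\sim(\eta-1)$, the second piece is at most half of that plus $C(\eta-1)^{1-p}\int(u+d)^{p-\eta}|\nabla\psi|^pw$. Since $|\nabla v|^p=\big(\tfrac{p-\eta}{p}\big)^p(u+d)^{-\eta}|\nabla u|^p$ and $v^p=(u+d)^{p-\eta}$, multiplying through by $(p-\eta)^p/(\eta-1)$ produces the left side $\int\psi^p|\nabla v|^pw$ and the first error term with the factor $(p-\eta)^p/(\eta-1)^p$. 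This is the standard Moser computation for negative powers.

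\textbf{Nonlocal part.} Split $I_2$ into $B_r\times B_r$ and twice the integral over $(\R^n\setminus B_r)\times B_r$. On $B_r\times B_r$, I follow the supersolution argument of \cite[Lemma 5.1]{DKPhar} (also \cite{Ok24}) and use the pointwise inequality
\[
|a-b|^{p-2}(a-b)\big(a^{1-\eta}\psi_1^p-b^{1-\eta}\psi_2^p\big)\le -c(\eta-1)\big|a^{\frac{p-\eta}{p}}-b^{\frac{p-\eta}{p}}\big|^p\max\{\psi_1,\psi_2\}^p+C(\eta-1)^{1-p}\max\{a,b\}^{p-\eta}|\psi_1-\psi_2|^p
\]
for $a=u(x)+d$, $b=u(y)+d$. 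The good term has a favourable sign and can be dropped. After multiplying by $(p-\eta)^p/(\eta-1)$, the error term gives the $K$-weighted term with coefficient $(p-\eta)^p/(\eta-1)^p$. The powers of $(p-\eta)$ will need adjusting, perhaps absorbing $p-\eta<p$ into $C$.

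On the off-diagonal part, with $x\in B_r$ and $y\notin B_r$, use
\[
|u(x)-u(y)|^{p-2}(u(x)-u(y))\le C\big((u(x)+d)^{p-1}+(u(y))_-^{p-1}\big),
\]
valid since $u(y)\ge -(u(y))_-$. The first part gives $(u+d)^{p-\eta}\psi^p$ times $\sup_x\int_{\R^n\setminus B_r}|x-y|^{-sp}w(y)/w(B_{x,y})\,dy$. For the second part, $(u(y))_-$ vanishes on $B_R$, so only $y\notin B_R$ contributes. Using $(u+d)^{1-\eta}\le d^{1-p}(u+d)^{p-\eta}$, I obtain the factor $d^{1-p}\int_{\R^n\setminus B_R}\frac{u_-^{p-1}(y)}{|x-y|^{sp}}\frac{w(y)}{w(B_{x,y})}dy$.

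\textbf{Main obstacle.} The step I expect to be hardest is converting this last integral, centred at $x\in B_r$, into $R^{-p}\Tail^{p-1}(u_-;x_0,R)$, which is centred at $x_0$. Since $r\le 3R/4$ and $|y-x_0|\ge R$, we have $|x-y|\ge|y-x_0|/4$. Moreover $B_{x,y}$ and $B_{x_0,y}$ both lie in $B_{2|y-x_0|}(x_0)$ and have comparable radii, so Lemma \ref{comlem} gives $w(B_{x_0,y})\le C w(B_{x,y})$, as in \eqref{B0}. Together these give $C\,R^{-p}\Tail^{p-1}(u_-;x_0,R)$ by \eqref{tail}. Multiplying by $(p-\eta)^p/(\eta-1)$ and collecting the terms yields the stated estimate.
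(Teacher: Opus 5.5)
Your proposal is correct and follows essentially the same route as the paper. It uses the same test function $\varphi=\psi^p(u+d)^{1-\eta}$, the same local Moser-type computation with Young's inequality at scale $\eta-1$ (which the paper simply quotes from the unweighted mixed case), and the same split of the nonlocal term: the diagonal part is handled by the Di Castro--Kuusi--Palatucci pointwise inequality, and in the off-diagonal part the $u_-$ contribution is recentred at $x_0$ via $|x-y|\ge|y-x_0|/4$ and Lemma~\ref{comlem}. Your hedge about adjusting powers of $p-\eta$ is unnecessary, since multiplying the error coefficient $C(p)(\eta-1)^{1-p}$ by $(p-\eta)^p/(\eta-1)$ gives exactly the stated factor $(p-\eta)^p/(\eta-1)^p$.
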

\begin{proof}
    Since $u$ is a weak supersolution of (\ref{ME}), choosing $\varphi=\psi^p (u+d)^{1-\eta}$ in \eqref{wksoleqn}, we arrive at
    \begin{equation}\label{I123}
        I_1+\alpha\,I_2\geq I_3,
    \end{equation}
    where $$I_1:=\int_{\Omega}\mathcal{A}(x,\nabla u)\cdot\nabla\varphi\;dx,$$
    $$I_2:=\int_{\R^n}\int_{\R^n}\frac{|u(x)-u(y)|^{p-2}(u(x)-u(y))(\varphi(x)-\varphi(y))}{|x-y|^{sp}}K(x,y)\;dx\;dy,$$
    \text{ and }
    $$I_3:=\int_{\Omega}\mathcal{F}(x,u)\varphi\;dx.$$
    Following the proof of \cite[estimate {(3.10)} on page 5401]{GK}, we obtain
    \begin{align}\label{WW1}
        I_1\leq -\bigg(\frac{\eta-1}{2}\bigg)\bigg(\frac{p}{p-\eta}\bigg)^p\int_{B_r(x_0)}|\nabla v|^p\psi^pw\;dx+\frac{C}{(\eta-1)^{p-1}}\int_{B_r(x_0)}|\nabla\psi|^pv^pw\;dx,
    \end{align}
    for some constant $C=C(p,{C_1,C_2)}>0$. 
  We divide $I_2$ into two parts as follows:
  \begin{align}
      I_2=I_{21}+I_{22},
  \end{align}
    with $$I_{21}:=\int_{B_r(x_0)}\int_{B_r(x_0)}\frac{|u(x)-u(y)|^{p-2}(u(x)-u(y))(\varphi(x)-\varphi(y))}{|x-y|^{sp}}K(x,y)\;dx\;dy$$
    and $$I_{22}:=2\int_{\R^n\setminus B_r(x_0)}\int_{B_r(x_0)}\frac{|u(x)-u(y)|^{p-2}(u(x)-u(y))(\varphi(x)-\varphi(y))}{|x-y|^{sp}}K(x,y)\;dx\;dy.$$
    Proceeding as in the proof of \cite[Lemma 5.1, {pages 1830–1833}]{DKPhar}, we obtain
    \begin{align}\label{WW2}
        I_{21}&\leq -{C(p,\eta,C_1,C_2)}\int_{B_r(x_0)}\int_{B_r(x_0)}\frac{|v(x)-v(y)|^p}{|x-y|^{sp}}|\psi(y)|^pK(x,y)\;dx\;dy\nonumber\\
    &+\frac{{C(p,C_1,C_2)}}{(\eta-1)^{p-1}}\int_{B_r(x_0)}\int_{B_r(x_0)}\frac{|\psi(x)-\psi(y)|^p}{|x-y|^{sp}}\max\{v(x),v(y)\}^pK(x,y)\;dx\;dy.
    \end{align}
    Moreover, proceeding as in the proof of \cite[Lemma 5.1, {page 1830}]{DKPhar}, we obtain
\begin{align}\label{WW3}
    I_{22}\leq C(p,\Lambda)\Bigg(&\sup_{x\in\supp\,\psi}\int_{\R^n\setminus B_r(x_0)}\frac{1}{|x-y|^{sp}}\frac{w(y)}{w(B_{x,y})}\;dy\nonumber\\
    &+d^{1-p}\int_{\R^n\setminus B_r(x_0)}\frac{{(u(y))_-^{p-1}}}{|x-y|^{sp}}\frac{w(y)}{w(B_{x,y})}\;dy\Bigg)\int_{B_r(x_0)}\psi^pv^pw\;dx.
\end{align}
In the last estimate above, we have also used the fact $K(x,y)\leq \frac{\Lambda w(x)w(y)}{w(B_{x,y})}$. For $0<r\leq \frac{3R}{4}$ and $y\in \R^n\setminus B_R(x_0)$, we observe that
\begin{align}
    \frac{|y-x_0|}{|x-y|}\leq 1+\frac{|x-x_0|}{|x-y|}\leq 1+\frac{r}{R-r}\leq 4.
\end{align}
Furthermore, since $B_{x,y}\cup B_{x_0,y}\subset B_{2|y-x_0|}(x_0)$, using Lemma \ref{comlem}, we have
\begin{align}
    \frac{w(B_{x_0,y})}{w(B_{x,y})}&=\frac{w(B_{x_0,y})}{w(B_{2|y-x_0|}(x_0))}\frac{w(B_{2|y-x_0|}(x_0))}{w(B_{x,y})}\nonumber\\
    &\leq C\Big(\frac{|B_{x_0,y}|}{|B_{2|y-x_0|}(x_0)|}\Big)^\sigma\Big(\frac{|B_{2|y-x_0|}(x_0)|}{|B_{x,y}|}\Big)^p\leq C\Big(\frac{|y-x_0|}{|x-y|}\Big)^{np}\leq C,
\end{align}
for some constant $C=C(n,p,[w]_p)>0$.
Therefore, we deduce that 
\begin{align}\label{WW4}
   {\int_{\R^n\setminus B_r(x_0)}\frac{(u(y))_-^{p-1}}{|x-y|^{sp}}\frac{w(y)}{w(B_{x,y})}\;dy}&\leq \int_{\R^n\setminus B_R(x_0)}\frac{(u(y))_-^{p-1}}{|x_0-y|^{sp}}\frac{w(y)}{w(B_{x_0,y})}\;dy\nonumber\\
   &=R^{-p}\Tail^{p-1}(u_-;x_0,R). 
\end{align}
Combining the estimates \eqref{WW1}, \eqref{WW2}, \eqref{WW3}, \eqref{WW4} in \eqref{I123} alongside the fact $0\leq K(x,y)\leq \frac{\Lambda w(x)w(y)}{w(B_{x,y})}$ and the hypothesis \eqref{F}, we obtain the desired estimate.
\end{proof}

The following logarithmic estimate for weak supersolutions is one of the key ingredients in the proof of the expansion of positivity property established in Lemma \ref{expansionofpositivity}.
\begin{Lemma}[Logarithmic estimate for supersolutions]\label{logestlem}
    Let $d>0$ and $u$ be a weak supersolution to (\ref{ME}) such that $u\geq 0$ in $B_R(x_0)\Subset \Omega$. Then, there exists a constant $C=C(n,p,s,\Lambda,\alpha,C_1,C_2,[w]_p,M)>0$ such that 
    \begin{align}\label{logestimate1}
    &\fint_{B_r(x_0)}|\nabla \log(u+d)|^pw\;dx\nonumber\\
        &\leq \frac{C}{r^p}\Big\{1+d^{1-p}(\frac{r}{R})^p\Tail^{p-1}(u_-;x_0,R)+d^{1-p}r^p\Big(\fint_{B_{2r}(x_0)}\Big(\frac{|f|}{w}\Big)^qw\;dx\Big)^\frac{1}{q}\Big\},
    \end{align}
    for every $0<r\leq 1$ with $r<\frac{R}{2}$.  Furthermore, if $$d\geq \Big(\frac{r}{R}\Big)^{p'}\Tail(u_-;x_0,R)+r^{p'}\Big(\fint_{B_{2r}(x_0)}\Big(\frac{|f|}{w}\Big)^qw\;dx\Big)^\frac{1}{q(p-1)},$$ then 
    \begin{align}\label{logestimate2}
        \fint_{B_r(x_0)}|\nabla \log(u+d)|^pw\;dx\leq \frac{C}{r^p}.
    \end{align}
    Here $M$ and $\Tail$ are given in \eqref{conditiong} and \eqref{tail}, respectively.
\end{Lemma}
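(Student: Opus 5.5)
We test the weak supersolution inequality \eqref{wksoleqn} with $\varphi=\psi^p(u+d)^{1-p}$, where $\psi\in C_c^\infty(B_{3r/2}(x_0))$, $0\le\psi\le1$, $\psi\equiv1$ on $B_r(x_0)$ and $|\nabla\psi|\le C/r$. This is the logarithmic analogue of Lemma~\ref{EWH} with $\eta=p$. Since $u\ge0$ in $B_R(x_0)$ and $3r/2<R$, the test function is admissible: it is nonnegative, lies in $W^{1,p}_0$ of a compact subset, and $(u+d)^{1-p}\le d^{1-p}$.

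\textbf{Local term.} By (A1)--(A3) and $\nabla\varphi=p\psi^{p-1}\nabla\psi\,(u+d)^{1-p}-(p-1)\psi^p(u+d)^{-p}\nabla u$, we get
\[
\int \mathcal{A}(x,\nabla u)\cdot\nabla\varphi\,dx\le -(p-1)C_1\int\psi^p|\nabla\log(u+d)|^pw\,dx+pC_2\int\psi^{p-1}|\nabla\psi|\,|\nabla\log(u+d)|^{p-1}w\,dx.
\]
Young's inequality absorbs the last term and leaves $C\int|\nabla\psi|^pw\le Cr^{-p}w(B_{2r})$.

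\textbf{Nonlocal term.} We split it into $B_{2r}\times B_{2r}$ and the part with one variable outside $B_{2r}(x_0)$. On the diagonal block, the pointwise logarithmic inequality of \cite[Lemma 1.3]{DKPhar} gives a nonpositive logarithmic energy term plus $C\iint|\psi(x)-\psi(y)|^p|x-y|^{-sp}K$. This is bounded by $Cr^{-p}\int_{B_{2r}}\int_{B_{2r}}|x-y|^{p-sp}\frac{w(x)w(y)}{w(B_{x,y})}\,dy\,dx\le Cr^{-sp}w(B_{2r})$ by Lemma~\ref{intlem}(i). Since $\alpha>0$ and $r\le1$, this is at most $Cr^{-p}w(B_{2r})$.

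For the off-diagonal part we use $(u(x)-u(y))_+^{p-1}\le (u(x)+d)^{p-1}+u_-(y)^{p-1}$, where $x\in\supp\psi$. The first piece is bounded by $\sup_x\int_{\R^n\setminus B_{2r}}|x-y|^{-sp}\frac{w(y)}{w(B_{x,y})}\,dy\cdot w(B_{2r})\le Cr^{-sp}w(B_{2r})$ by Lemma~\ref{intlem}(ii). The second piece, where only $y\notin B_R$ contributes since $u\ge0$ in $B_R$, is handled by the comparison $w(B_{x_0,y})\lesssim w(B_{x,y})$ exactly as in \eqref{WW4}. It gives $d^{1-p}R^{-p}\Tail^{p-1}(u_-;x_0,R)\,w(B_{2r})$, which is the stated term after dividing by $w(B_r)\simeq w(B_{2r})$ (Lemma~\ref{doubling}) and factoring $r^{-p}$.

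\textbf{Right-hand side.} We bound $\mathcal{F}(x,u)\varphi\ge -(|f|+|g|u^{p-1})\psi^p(u+d)^{1-p}$, and then use $u^{p-1}(u+d)^{1-p}\le1$ and $(u+d)^{1-p}\le d^{1-p}$. Writing $|f|=\frac{|f|}{w}w$ and applying H\"older in $L^q(w)$, the $g$-term is at most $w(B_{2r})\big(\fint_{B_{2r}}(|g|/w)^qw\big)^{1/q}\le Mr^{-p}w(B_{2r})$ by \eqref{conditiong}. The $f$-term gives $d^{1-p}w(B_{2r})\big(\fint_{B_{2r}}(|f|/w)^qw\big)^{1/q}$. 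Collecting everything and dividing by $w(B_r)$ yields \eqref{logestimate1}. For \eqref{logestimate2}, the assumed lower bound on $d$ makes $d^{1-p}(r/R)^p\Tail^{p-1}\le1$ and $d^{1-p}r^p(\fint(|f|/w)^qw)^{1/q}\le1$, because $p'(p-1)=p$.

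\textbf{Main obstacle.} The delicate step is the nonlocal diagonal term. We must verify that the sign structure from \cite{DKPhar} survives with the weighted kernel $K$; it should, since it is pointwise in $(x,y)$ and only needs $K\ge0$, so $\lambda\ge0$ suffices. We must also handle the weighted integrals through Lemma~\ref{intlem} together with the doubling comparisons of $w(B_{x,y})$.
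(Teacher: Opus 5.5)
Your proposal is correct and follows essentially the same route as the paper: the same test function $\varphi=\psi^p(u+d)^{1-p}$ with $\psi$ supported in $B_{3r/2}(x_0)$, the local term via (A2)--(A3) and Young's inequality, the nonlocal term via the weighted Di Castro--Kuusi--Palatucci logarithmic argument (which the paper imports from \cite[Proposition 3.10]{Ok24}), and the right-hand side via H\"older's inequality in $L^q(w)$ together with \eqref{conditiong}. The only slip is cosmetic: the bound $(u(x)-u(y))_+^{p-1}\le (u(x)+d)^{p-1}+u_-(y)^{p-1}$ needs a factor $\max\{1,2^{p-2}\}$ when $p>2$, which merely changes the constant.
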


\begin{proof}
We aim to prove only the estimate (\ref{logestimate1}), since the estimate \eqref{logestimate2} follows from \eqref{logestimate1}. To this end, let $\psi\in C^\infty_c(B_{\frac{3r}{2}}(x_0))$ be a nonnegative function such that $0\leq \psi \leq 1$ in $B_{\frac{3r}{2}}(x_0)$, $\psi\equiv1$ in $B_r(x_0)$ and $|\nabla\psi|\leq \frac{4}{r}$ in $B_{\frac{3r}{2}}(x_0)$. For $d>0$, incorporating $\varphi=\psi^p(u+d)^{1-p}$ in (\ref{wksoleqn}), we get
\begin{align}\label{weaksuper}
   0\leq  I_1+\alpha\,I_2-I_3,
\end{align}
where $$I_1:=\int_{B_{2r}(x_0)}\mathcal{A}(x,\nabla u)\cdot\nabla\varphi\;dx,$$
$$I_2:=\int_{\R^n}\int_{\R^n}\frac{|u(x)-u(y)|^{p-2}(u(x)-u(y))(\varphi(x)-\varphi(y))}{|x-y|^{sp}}K(x,y)\;dx\;dy,$$
and
$$I_3:=\int_{B_{2r}(x_0)}\mathcal{F}(x,u)\varphi\;dx.$$

Proceeding as in the proof of \cite[pages 717-718, Lemma 3.4]{KussiJuha} and using the properties of $\psi$, we arrive at
\begin{equation}\label{I1log}
\begin{split}
    I_1
    &\leq -C\int_{B_{r}(x_0)}|\nabla \log(u+d)|^p~w\;dx+\frac{C}{r^p}w(B_r(x_0))
    \end{split}
\end{equation}
for some constant $C=C(n,p,\Lambda,C_1,C_2,[w]_p)>0$. Along the lines of the proof of the estimates of $I_1$ and $I_2$ in {\cite[Proposition 3.10, pages 28-31]{Ok24}}, we deduce
\begin{equation}\label{I2log}
\begin{split}
    I_2&\leq \frac{C}{r^{sp}}w(B_r(x_0))
    +Cd^{1-p}w(B_r(x_0))R^{-p}\Tail^{p-1}(u_-;x_0,R)
\end{split}
\end{equation}
for some positive constant $C=C(n,p,s,\Lambda,[w]_p)$.

Using the hypothesis \eqref{F} and Lemma \ref{doubling} along with H\"{o}lder's inequality, we obtain there exists a constant $C=C(n,{p},[w]_p)>0$ such that
\begin{equation}\label{I3log}
\begin{split}
    |I_3|&\leq \int_{B_{2r}(x_0)}|g|u^{p-1}\psi^p(u+d)^{1-p}\;dx+\int_{B_{2r}(x_0)}|f|\psi^p(u+d)^{1-p}\;dx\\   
    &\leq \int_{B_{2r}(x_0)}\frac{|g|}{w}\,w\;dx+d^{1-p}\int_{B_{2r}(x_0)}\frac{|f|}{w}\,w\;dx\\
    &\leq w(B_{2r}(x_0))\Big(\fint_{B_{2r}(x_0)}\Big(\frac{|g|}{w}\Big)^qw\;dx\Big)^\frac{1}{q}+d^{1-p}w(B_{2r}(x_0))\Big(\fint_{B_{2r}(x_0)}\Big(\frac{|f|}{w}\Big)^qw\;dx\Big)^\frac{1}{q}\\
    &\leq Cw(B_{r}(x_0))\Big(\fint_{B_{2r}(x_0)}\Big(\frac{|g|}{w}\Big)^q w\;dx\Big)^\frac{1}{q}+Cd^{1-p}w(B_{r}(x_0))\Big(\fint_{B_{2r}(x_0)}\Big(\frac{|f|}{w}\Big)^qw\;dx\Big)^\frac{1}{q}.
\end{split}
\end{equation}
Substituting the estimates of \eqref{I1log}, \eqref{I2log} and \eqref{I3log} above into \eqref{weaksuper} and utilizing \eqref{conditiong} along with $0<r\leq 1$ and $0<s<1$, we arrive at
\begin{align*}
    &\fint_{B_{r}(x_0)}|\nabla \log(u+d)|^p~w\;dx 
    \nonumber\\
    &\leq \frac{C}{r^p}\Big\{1+d^{1-p}\Big(\frac{r}{R}\Big)^p\Tail^{p-1}(u_-;x_0,R)+d^{1-p}r^p\Big(\fint_{B_{2r}(x_0)}\Big(\frac{|f|}{w}\Big)^qw\;dx\Big)^\frac{1}{q}\Big\},
\end{align*}
for some constant $C=C(n,p,s,\Lambda,\alpha,C_1,C_2,[w]_p,M)>0$.
\end{proof}

The following tail estimate holds under the additional hypothesis $\la>0$, which is a key ingredient in the proof of the Harnack inequality in Theorem \ref{Harthm}.
\begin{Lemma}[Tail estimate for supersolutions]\label{Tailestimate}
    Assume, in addition to the standing hypotheses, that the constant $\lambda>0$ in \eqref{kernal}. Suppose that $u$ is a weak supersolution of equation \eqref{ME} such that $u\geq 0$ in $B_R(x_0)\Subset\Omega$. Then there exists a constant $C=C(n,p,s,\la,\Lambda,\alpha,C_1,C_2,[w]_p,\\M)>0$ such that
    \begin{align}
        \Tail(u_+;x_0,r)\leq C\Big\{\sup_{B_r(x_0)}u+\Big(\frac{r}{R}\Big)^{p'}\Tail(u_-;x_0,R)+r^{p'}\Big(\fint_{B_r(x_0)}\Big(\frac{|f|}{w}\Big)^qw\;dx\Big)^\frac{1}{q(p-1)}\Big\},
    \end{align}
    for every $0<r\leq 1$ with $r<R$.
\end{Lemma}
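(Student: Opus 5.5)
The plan is to follow the tail estimate of Di Castro--Kuusi--Palatucci \cite[Lemma 4.2]{DKPhar}, adapted to the mixed weighted setting as in \cite{GK, Ok24}. Set $k:=\sup_{B_r(x_0)}u$; we may assume $k<\infty$. Let $\psi\in C_c^\infty(B_{r/2}(x_0))$ with $0\le\psi\le1$, $\psi\equiv1$ on $B_{r/4}(x_0)$ and $|\nabla\psi|\le 8/r$. We test the supersolution inequality \eqref{wksoleqn} with the nonnegative function $\varphi=(u-2k)_-\psi^p$. Since $u\ge0$ in $B_r$, we have $k\le (u-2k)_-\le 2k$ on $\supp\psi$.

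\textbf{Local term.} By (A2)--(A3) and Young's inequality,
\[
\int\mathcal{A}(x,\nabla u)\cdot\nabla\varphi\,dx\le -c\int\psi^p|\nabla (u-2k)_-|^pw\,dx + Ck^p r^{-p}w(B_r).
\]

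\textbf{Nonlocal term.} Split the double integral into the part over $B_r\times B_r$ and the part with one variable in $\R^n\setminus B_r$.
\begin{itemize}
\item[(i)] The local-local part is bounded by $Ck^p r^{-sp}w(B_r)$, using $K\le\Lambda w(x)w(y)/w(B_{x,y})$ and Lemma \ref{intlem}(i) with $t=p(1-s)$.
\item[(ii)] For $x\in\supp\psi$ and $y\notin B_r$, split according to the sign of $u(y)$. Where $u(y)\ge0$, the integrand equals $-(u(y)-u(x))^{p-1}\varphi(x)K$. Here $u(x)\le k$, so $(u(y)-u(x))^{p-1}\ge 2^{2-p}u(y)^{p-1}-Ck^{p-1}$ (handle $p<2$ and $p\ge2$ separately). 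Because $\lambda>0$, the kernel is bounded below by $\lambda w(x)w(y)/w(B_{x,y})$. Comparing $|x-y|\simeq|y-x_0|$ and $w(B_{x,y})\simeq w(B_{x_0,y})$ exactly as in the proof of Lemma \ref{EWH}, this part is at most
\[
-c\lambda\, k\,w(B_{r/4})\,r^{-p}\Tail^{p-1}(u_+;x_0,r)+Ck^pr^{-sp}w(B_r).
\]
\item[(iii)] The part where $u(y)<0$ is bounded as in \eqref{WW3}--\eqref{WW4} by
\[
Ck\,w(B_r)\big(k^{p-1}r^{-sp}+R^{-p}\Tail^{p-1}(u_-;x_0,R)\big).
\]
\end{itemize}

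\textbf{Right-hand side.} Using \eqref{F} and Hölder's inequality as in \eqref{I3log},
\[
\Big|\int\mathcal{F}(x,u)\varphi\,dx\Big|\le Ck\,w(B_r)\Big(k^{p-1}\big(\textstyle\fint(|g|/w)^qw\big)^{1/q}+\big(\fint(|f|/w)^qw\big)^{1/q}\Big).
\]
Condition \eqref{conditiong} turns the $g$-term into $CMk^pr^{-p}w(B_r)$.

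\textbf{Conclusion.} Put everything into the inequality $0\le I_1+\alpha I_2-I_3$. Drop the negative gradient term and use the doubling property (Lemma \ref{doubling}) to replace $w(B_r)$ by a multiple of $w(B_{r/4})$. Dividing by $k\,w(B_{r/4})r^{-p}$ and using $r\le1$ (so $r^{-sp}\le r^{-p}$) gives
\[
\Tail^{p-1}(u_+;x_0,r)\le C\Big(k^{p-1}+(r/R)^p\Tail^{p-1}(u_-;x_0,R)+r^p\big(\textstyle\fint_{B_r}(|f|/w)^qw\big)^{1/q}\Big).
\]
Taking the $(p-1)$-th root yields the claim, since $(r^p)^{1/(p-1)}=r^{p'}$. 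If $k=0$, replace $k$ by $k+\epsilon$ and let $\epsilon\to0$.

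The main obstacle is step (ii). We must extract the tail with a favorable sign and uniform constants: the pointwise lower bound on $(u(y)-u(x))^{p-1}$ must hold for all $p$, and the kernel comparison must work under the weight. This is exactly where $\lambda>0$ is used.
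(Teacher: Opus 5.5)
Your proposal is correct and follows essentially the same route as the paper. The paper, following the Di Castro--Kuusi--Palatucci tail estimate together with the local estimate from the mixed setting, tests with $(u-2\sup_{B_r}u)\psi^p$, which is just the negative of your test function; it then uses $\lambda>0$ to extract $\Tail(u_+;x_0,r)$ from the nonlocal term, and controls the $u_-$ part and the right-hand side exactly as you describe.

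One detail in step (ii) needs tidying. Since $u(y)<u(x)$ can occur, $(u(y)-u(x))^{p-1}$ must be read as the signed power $|t|^{p-2}t$. Also, for $p<2$ the constant $2^{2-p}$ should be replaced by $1$.
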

\begin{proof}
    Let $\varphi\in C^\infty_c(B_r(x_0))$ be a nonnegative function such that $0\leq \varphi\leq 1$ in $B_r(x_0)$, $|\nabla\varphi|\leq \frac{8}{r}$ in $B_r(x_0)$ and $\phi\equiv 1$ in $B_\frac{r}{2}(x_0)$. We define $l:=\sup_{B_r(x_0)} u$ and $v:=u-2l$. Incorporating $\varphi=v\,\psi^p$ as a test function in \eqref{wksoleqn}, we obtain
    \begin{equation}\label{supertail}
        I+\alpha\,J
        \leq K+L,
    \end{equation}
    where $$I:=\int_{B_r(x_0)}\mathcal{A}(x,\nabla u)\cdot\nabla \varphi \;dx,$$
    $$J:=\int_{\R^n}\int_{\R^n}\frac{|u(x)-u(y)|^{p-2}(u(x)-u(y))(\varphi(x)-\varphi(y))}{|x-y|^{sp}}K(x,y)\;dx\;dy,$$
$$K:=\int_{B_r(x_0)}|g||u|^{p-1}|\varphi| \;dx,\quad\mbox{ and }\quad L:=\int_{B_r(x_0)}|f||\varphi|\;dx.$$
\textbf{Estimate of $I$:} Using the estimate \cite[estimate (3.6) on page 7 in Lemma 3.4]{GKK}, we have
\begin{align}\label{estItail}
    I\geq -\frac{Cl^p}{r^p}w(B_r(x_0)),
\end{align}
for some constant $C=C(n,p,C_1,C_2,[w]_p)>0$.\\
\textbf{Estimate of $J$:} Taking into account $r\in(0,1]$ and $\la>0$ and Proceeding along the lines of the proof of the estimates
(4.11) and (4.9) in \cite[pages 1827-1828]{DKPhar}, we obtain
\begin{align}\label{estJtail}
    J\geq Clr^{-p}\Tail^{p-1}(u_+;x_0,r)w(B_r(x_0))&-ClR^{-p}\Tail^{p-1}(u_-;x_0,R)w(B_r(x_0))\nonumber\\
    &-Cl^pr^{-p}w(B_r(x_0)),
\end{align}
for some $C=C(n,p,s,{\lambda},\Lambda,[w]_p)>0$.\\
\textbf{Estimates of $K$ and $L$:} Since $u\leq l$ and $v\leq 2l$ in $B_r(x_0)$, we have 
\begin{align}\label{estKtail}
    |K|\leq 2l^p\int_{B_r(x_0)}|g|\;dx\leq 2l^p w(B_r(x_0))\Big(\fint_{B_r(x_0)}\Big(\frac{|g|}{w}\Big)^q w\;dx\Big)^\frac{1}{q},
\end{align}
and
\begin{align}\label{estLtail}
    |L|\leq 2l w(B_r(x_0))\Big(\fint_{B_r(x_0)} \Big(\frac{|f|}{w}\Big)^q w\;dx\Big)^\frac{1}{q}.
\end{align}
Substituting the above estimates \eqref{estItail}, \eqref{estJtail}, \eqref{estKtail} and \eqref{estLtail} into (\ref{supertail}) and using \eqref{conditiong} in the resulting estimate, we obtain
\begin{align}\label{tailf1}
    \Tail^{p-1}(u_+;x_0,r)
    &\leq C\Big\{l^{p-1}+\big(\frac{r}{R}\big)^p\Tail^{p-1}(u_-;x_0,R)+r^p\Big(\fint_{B_r(x_0)} \Big(\frac{|f|}{w}\Big)^q w\;dx\Big)^\frac{1}{q}\Big\},
\end{align}
where $C=C(n,p,s,{\lambda},\Lambda,\alpha,C_1,C_2,[w]_p,M)>0$ is a constant.
    Thus, (\ref{tailf1}) yields
    $$\Tail(u_+;x_0,r)\leq C\Big\{l+\Big(\frac{r}{R}\Big)^{p'}\Tail(u_-;x_0,R)+r^{p'}\Big(\fint_{B_r(x_0)} \Big(\frac{|f|}{w}\Big)^q w\;dx\Big)^\frac{1}{q(p-1)}\Big\},$$
    for some constant $C=C(n,p,s,{\lambda},\Lambda,\alpha,C_1,C_2,[w]_p,M)>0$. This completes the proof.
\end{proof}

\subsection{Expansion of positivity}
In this subsection, our primary objective is to establish the expansion of positivity property for weak supersolutions. This property plays a crucial role in proving the local Hölder continuity result stated in {Theorem \ref{cty}}. Furthermore, it is instrumental in deriving the preliminary version of the weak Harnack inequality presented in Lemma \ref{WHI0}, which subsequently serves as a key step in the proofs of the Harnack and weak Harnack inequalities.
\begin{Lemma}\label{expansionofpositivity}
    Let $u$ be a weak supersolution of \eqref{ME} such that $u\geq 0$ in $B_R(x_0)\Subset\Omega$. Let $k\geq 0$ and assume that there exists a constant $\tau\in(0,1)$ such that 
    \begin{align}\label{density}
        w(\{B_{2r}(x_0):u\geq k\})\geq \tau w(B_{2r}(x_0),
    \end{align}
    for some $0<r\leq 1$ with $r<\frac{R}{4}$. Then there exists a constant $\zeta=\zeta(n,p,s,q,\kappa,\Lambda,\alpha,C_1,C_2,[w]_p,$ $M,\tau)\in (0,\frac{1}{4})$ such that   \begin{align}\label{densityconclusion}
        \inf_{B_r(x_0)}u\geq \zeta k-\Big(\frac{r}{R}\Big)^{p'}\Tail(u_-;x_0,R)-r^{p'}\Big(\fint_{B_{2r}(x_0)}\Big(\frac{|f|}{w}\Big)^qw\;dx\Big)^\frac{1}{q(p-1)},
    \end{align}
    where $M$ and $\Tail$ are given in (\ref{conditiong}) and \eqref{tail}, respectively.
\end{Lemma}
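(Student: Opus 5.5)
The plan is the standard De Giorgi route for expansion of positivity, adapted from \cite{DKPhar, GK} to the weighted setting: a logarithmic estimate, then a De Giorgi iteration on $u_-$-type truncations. Set
\[
T:=\Big(\frac{r}{R}\Big)^{p'}\Tail(u_-;x_0,R)+r^{p'}\Big(\fint_{B_{2r}(x_0)}\Big(\frac{|f|}{w}\Big)^qw\,dx\Big)^{\frac{1}{q(p-1)}}.
\]
We may assume $\zeta k>T$, since otherwise the conclusion \eqref{densityconclusion} is trivial because $u\geq0$ in $B_r(x_0)$. Choose $d:=T$; if $T=0$, take $d=\epsilon>0$ and let $\epsilon\to0$ at the end.

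\textbf{Step 1: measure of the low set.} Apply Lemma \ref{logestlem}, in the form \eqref{logestimate2}, on $B_{2r}(x_0)$, which is admissible since $2r<R/2$. This gives $\fint_{B_{2r}}|\nabla\log(u+d)|^pw\,dx\leq Cr^{-p}$. Define
\[
\phi:=\min\Big\{\Big(\log\frac{k+d}{u+d}\Big)_+,\ \log\frac1{\epsilon}\Big\}, \qquad 0<\epsilon<\tfrac14 .
\]
By \eqref{density}, $\phi=0$ on a set of $w$-measure at least $\tau w(B_{2r})$. The weighted Poincar\'e inequality (Theorem \ref{SPI}, with $k=1$, and the usual argument bounding $\phi$ by its mean plus oscillation) then gives
\[
\log\frac1{\epsilon}\ \frac{w(B_{2r}\cap\{u\le 2\epsilon k\})}{w(B_{2r})}\leq \frac{C}{\tau}.
\]
Note that $u+d\le 2\epsilon k+d\le 3\epsilon(k+d)$ there, using $d<\zeta k\le\epsilon k$ for suitable $\zeta\le\epsilon$. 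Hence $w(B_{2r}\cap\{u\leq 2\epsilon k\})\leq \frac{C}{\tau\log(1/\epsilon)}\,w(B_{2r})$, which can be made as small as desired by choosing $\epsilon$ small.

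\textbf{Step 2: De Giorgi iteration.} Set $\rho_j=r+2^{-j}r$, $k_j=\epsilon k+2^{-j}\epsilon k$, and $v_j=(u-k_j)_-\le 2\epsilon k$ in $B_R$. Apply Lemma \ref{Energy} for supersolutions with cutoffs between $\rho_{j+1}$ and $\rho_j$, and control the terms as follows.
\begin{itemize}
\item The nonlocal gradient term is handled via the kernel bound and Lemma \ref{intlem}, giving $C2^{jp}r^{-p}(\epsilon k)^p w(A_j)$, where $A_j=B_{\rho_j}\cap\{u<k_j\}$ and $r\le1$, $s<1$.
\item The tail term splits into $B_R\setminus B_{\rho_j}$, where $v_j\le2\epsilon k$, and $\R^n\setminus B_R$, where $(u)_-$ enters. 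Using the comparison $w(B_{x_0,y})\lesssim w(B_{x,y})$ from the proof of Lemma \ref{EWH}, this is bounded by $C2^{j(n+sp)}r^{-p}(\epsilon k)^{p-1}\big[\epsilon k+(r/R)^p\Tail^{p-1}(u_-;x_0,R)(r)^{-0}\big]$. Since $d=T<\epsilon k$, this is at most $C2^{j\cdot}r^{-p}(\epsilon k)^p w(A_j)$.
\item The $g$-term is estimated by $|g||u|^{p-1}v_j\le |g|(2\epsilon k)^{p}$ on $A_j$, with H\"older in $L^q(w)$ and $L^{q'}(w)$ together with \eqref{conditiong}. This yields $M r^{-p}(\epsilon k)^p w(B)\,(w(A_j)/w(B))^{1/q'}$.
\item The $f$-term is similarly at most $(\epsilon k)\,r^{-p}T^{p-1}w(B)(w(A_j)/w(B))^{1/q'}\le r^{-p}(\epsilon k)^p\cdots$.
\end{itemize}

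Combine these with Theorem \ref{WIT} (exponent $\kappa$) applied to $v_j\psi$, and use $v_j\ge 2^{-j-1}\epsilon k$ on $A_{j+1}$. With $Y_j=w(A_j)/w(B_{\rho_j})$ this gives
\[
Y_{j+1}\le C b^j\, Y_j^{1+\beta}, \qquad \beta=\frac{1}{q'}-\frac1\kappa>0,
\]
which is exactly where $q>\kappa'$ is used. Note that $q>n\ge\kappa'$ holds since $\kappa'<n$.

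\textbf{Step 3: conclusion.} Lemma \ref{seqconv} gives $Y_j\to0$ provided $Y_0\le \nu(C,b,\beta)$. By Step 1, $Y_0$ is controlled after comparing $w(B_{2r})$ and $w(B_{2r}\cap\{u<2\epsilon k\})$, so we fix $\epsilon$ accordingly. Then $u\ge\epsilon k$ on $B_r$, so $\zeta=\epsilon$ works and $\inf_{B_r}u\geq\zeta k\geq \zeta k-T$.

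The main obstacle is Step 2's bookkeeping. The $g$-term must have exactly the same homogeneity $(\epsilon k)^p$ as the other terms, and the tail splitting requires the weighted comparisons of $w(B_{x,y})$ to be uniform in $j$. I would first verify the nonlocal tail bound by reusing the computation \eqref{WW4}.
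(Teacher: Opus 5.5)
Your proposal is correct and takes essentially the same route as the paper. It uses Lemma \ref{logestlem} with the weighted Poincar\'e inequality to make $w(B_{2r}(x_0)\cap\{u\le 2\zeta k\})$ small, then runs a De Giorgi iteration on $(k_j-u)_+$ via Lemma \ref{Energy} and Theorem \ref{WIT}, using $k_j\ge d$ for the tail and $f$ terms, \eqref{conditiong} for the $g$ term, and Lemma \ref{seqconv}, which closes because $q>\kappa'$. The differences from the paper (taking $d=T$ directly instead of $d/2+\epsilon$, slightly different levels $k_j$) are cosmetic, and your small slips are harmless: the tail contribution should read $Cr^{-p}\,\epsilon k\,\big[(\epsilon k)^{p-1}+(r/R)^p\Tail^{p-1}(u_-;x_0,R)\big]\,Y_j$; on $\{u\le 2\epsilon k\}$ you only get $\phi\ge\log\frac{1}{3\epsilon}$, and the $\log 3$ is absorbed in the Poincar\'e argument; and the actual iteration exponent is $\kappa/q'=1+\kappa\beta\ge 1+\beta$.
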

\begin{proof}
We prove the result into the following two steps.\\
\textbf{Step-I:}
We show that for every $\delta\in (0,\frac{1}{4})$ and for every $\epsilon>0$, there exists a constant $C=C(n,p,s,q,\Lambda,\alpha,C_1,C_2,[w]_p,M)>0$ such that 
\begin{align}\label{L0}
    \frac{w(\{B_{2r}(x_0):u\leq 2\delta k-\frac{d}{2}-\epsilon\})}{w(B_{2r}(x_0))}\leq \frac{C}{\tau\log\big(\frac{1}{2\delta}\big)},
\end{align}
where
$$
d:=\Big(\frac{r}{R}\Big)^{p'}\Tail(u_-;x_0,R)+r^{p'}\Big(\fint_{B_{2r}(x_0)}\Big(\frac{|f|}{w}\Big)^qw\;dx\Big)^\frac{1}{q(p-1)}.
$$
To this end, we denote $t_\epsilon=\frac{d}{2}+\epsilon>0$. Due to Lemma \ref{logestlem}, we have
\begin{align}
    \fint_{B_{2r}(x_0)}|\nabla \log(u+t_\epsilon)|^pw\;dx\leq \frac{C}{r^p},
\end{align}
for some constant $C=C(n,p,s,\Lambda,\alpha,C_1,C_2,[w]_p,M)>0$. Now we define $$v:=\min\Big[\log\Big(\frac{1}{2\delta}\Big),\Big\{\log\Big(\frac{k+t_\epsilon}{u(x)+t_\epsilon}\Big)\Big\}_+\Big].$$
Note that, $v=0$ if and only if $u\geq k$. Moreover,
\begin{align}\label{L1}
    \fint_{B_{2r}(x_0)}|\nabla v|^pw\;dx\leq \fint_{B_{2r}(x_0)}|\nabla \log(u+t_\epsilon)|^pw\;dx\leq \frac{C}{r^p},
\end{align}
for some constant $C=C(n,p,s,\Lambda,\alpha,C_1,C_2,[w]_p,M)>0$.
 Using H\"{o}lder's inequality, Theorem \ref{SPI} and (\ref{L1}), we deduce
 \begin{align}\label{LL}
     \fint_{B_{2r}(x_0)}|v-(v)_{B_{2r}(x_0)}|w\;dx\leq \Big(\fint_{B_{2r}(x_0)}|v-(v)_{B_{2r}(x_0)}|^pw\;dx\Big)^\frac{1}{p}&\leq Cr\Big(\fint_{B_{2r}(x_0)}|\nabla v|^pw\;dx\Big)^\frac{1}{p}\nonumber\\
     &\leq C,
 \end{align}
for some constant $C=C(n,p,s,\Lambda,\alpha,C_1,C_2,[w]_p,M)>0$.
Thanks to (\ref{density}), we have 
\begin{align}\label{L2}
    \log\Big(\frac{1}{2\delta}\Big)&=\frac{1}{w(\{B_{2r}(x_0): v=0\})}\int_{\{B_{2r}(x_0): v=0\}}\Big(\log\Big(\frac{1}{2\delta}\Big)-v\Big)w\;dx\nonumber\\
    &\leq \frac{1}{w(\{B_{2r}(x_0): u\geq k\})}\int_{B_{2r}(x_0)}\Big(\log\Big(\frac{1}{2\delta}\Big)-v\Big)w\;dx\nonumber\\
    &\leq \frac{1}{\tau w(B_{2r}(x_0))}\int_{B_{2r}(x_0)}\Big(\log\Big(\frac{1}{2\delta}\Big)-v\Big)w\;dx=\frac{1}{\tau}\Big(\log\Big(\frac{1}{2\delta}\Big)-(v)_{B_{2r}(x_0)}\Big).
\end{align}
{Integrating both sides of \eqref{L2} over $w(\{B_{2r}(x_0):v=\log\big(\frac{1}{2\delta}\big)\})$, and using \eqref{LL}, one has
\begin{align}
    \log\Big(\frac{1}{2\delta}\Big)w\Big(\Big\{B_{2r}(x_0):v=\log\Big(\frac{1}{2\delta}\Big)\Big\}\Big)
    &\leq \frac{1}{\tau}\int_{w\big(\{B_{2r}(x_0):v=\log\big(\frac{1}{2\delta}\big)\big\}\big)}
    \Big(\log\Big(\frac{1}{2\delta}\Big)-(v)_{B_{2r}(x_0)}\Big)w\;dx\nonumber\\
    &=\frac{1}{\tau}
    \int_{w\big(\{B_{2r}(x_0):v=\log\big(\frac{1}{2\delta}\big)\big\}\big)}\Big(v-(v)_{B_{2r}(x_0)}\Big)w\;dx\nonumber\\
    &\leq \frac{1}{\tau}\int_{B_{2r}(x_0)}|v-(v)_{B_{2r}(x_0)}|w\;dx\leq \frac{C}{\tau}w(B_{2r}(x_0)),
\end{align}
which yields
\begin{align}\label{L3}
    \frac{w(\{B_{2r}(x_0):v=\log\big(\frac{1}{2\delta}\big)\})}{w(B_{2r}(x_0))}\leq \frac{C}{\tau \log(\frac{1}{2\delta})},
\end{align}
for some constant $C=C(n,p,s,\Lambda,\alpha,C_1,C_2,[w]_p,M)>0$.
Finally, we observe that $v(x)=\log\big(\frac{1}{2\delta}\big)$ whenever $u\leq 2\delta k-\frac{d}{2}-\epsilon$. Thus, (\ref{L3}) gives (\ref{L0}).\\
\textbf{Step-II:} We aim to show that for every $\epsilon>0$, there exists a constant $\zeta=\zeta(n,p,s,q,\Lambda,\alpha,C_1,\\C_2,[w]_p,M){\in (0,\frac{1}{4})}$ such that
\begin{equation}\label{s2}
\inf_{B_r(x_0)} u\geq \zeta k-d-2\epsilon.
\end{equation}
Once, we prove the estimate \eqref{s2} above, the resulting estimate \eqref{densityconclusion} holds true. To prove the above estimate, without loss of generality, we assume that 
\begin{equation}\label{assumption}
\zeta k>d+2\epsilon,
\end{equation}
since otherwise the estimate \eqref{s2} above holds true due to the assumption $u\geq 0$ in $B_R(x_0)$.
For $j\in\mathbb{N}\cup\{0\}$, we define 
\[
r_j=(1+2^{-j})r,\quad \Tilde{r_j}=\frac{r_j+r_{j+1}}{2}
\]
and 
\[
B_j=B_{r_j}(x_0),\quad \Tilde{B}_j=B_{\Tilde{r}_j}(x_0).
\]
For $\zeta\in(0,\frac{1}{4})$ {satisfying \eqref{assumption}}, we also define
\[
k_j=(1+2^{-j-1})\zeta k,\quad \Tilde{k}_j=\frac{k_j+k_{j+1}}{2}.
\]
We further set
\[
u_j=(k_j-u)_+,\quad \Tilde{u}_j=(\Tilde{k}_j-u)_+.
\]
It is clear that 
\[
r\leq r_{j+1}\leq \Tilde{r}_j\leq r_{j}\leq 2r,\quad d\leq\zeta k\leq k_{j+1}\leq \Tilde{k}_j\leq k_{j}\leq \frac{3\zeta k}{2},
\] and hence $u_{j+1}\leq \Tilde{u}_j\leq u_{j}$. Let $\{\varphi_j\}_{j=0}^{\infty}\in C^\infty_c(\Tilde{B}_j(x_0))$ be a sequence of nonnegative function such that $0\leq \varphi_j\leq 1$ in $B_j$, $|\nabla\varphi_j|\leq \frac{2^{j+1}}{r_j}$ in $B_j$, and $\varphi_j\equiv 1$ in $B_{j+1}$. Now, we define a sequence of positive real numbers $\{A_j\}_{j=0}^{\infty}$ such that $$A_j:=\frac{w(\{B_j:u\leq k_j\})}{w(B_j)}.$$ In order to show that $A_j$ satisfies the assumptions of Lemma \ref{seqconv}, we use Theorem \ref{WIT} and the properties of $\varphi_j$ to estimate the following:
\begin{align}\label{SA1}
    A_{j+1}^\frac{1}{\kappa}(k_j-k_{j+1})^p&= \Big(\frac{1}{w(B_{j+1})}\int_{\{B_{j+1}:u\leq k_{j+1}\}}(k_j-k_{j+1})^{p\kappa}\;w\;dx\Big)^\frac{1}{\kappa}\nonumber\\
    &\leq \Big(\frac{1}{w(B_{j+1})}\int_{\{B_{j+1}:u\leq k_{j+1}\}}(k_j-u)^{p\kappa}\;w\;dx\Big)^\frac{1}{\kappa}\nonumber\\
    &\leq \Big(\fint_{B_{j+1}}u_j^{p\kappa}\;w\;dx\Big)^\frac{1}{\kappa}\nonumber\\
    &\leq C\Big(\fint_{B_{j}}(\varphi_ju_j)^{p\kappa}\;w\;dx\Big)^\frac{1}{\kappa}\nonumber\\
    &\leq Cr_j^p\fint_{B_j}|\nabla (\varphi_ju_j)|^pw\;dx\nonumber\\
    &\leq Cr_j^p\Big(\fint_{B_j}\varphi_j^p|\nabla u_j|^pw\;dx+\fint_{B_j} u_j^p|\nabla \varphi_j|^pw\;dx\Big),
\end{align}
where $C=C(n,p,[w]_p)>0$ is some constant. Since $u$ is a weak supersolution of (\ref{ME}), Lemma \ref{Energy} guaranties the existence  of a constant $C=C(n,p,\Lambda,\alpha,C_1,C_2,[w]_p)>0$ such that 
\begin{align}\label{SA2}
   \fint_{B_j}\varphi_j^p|\nabla u_j|^pw\;dx\leq C(I_1+I_2+I_3)+I_4+I_5, 
\end{align}
where
$$I_1:=\fint_{B_j} u_j^p|\nabla \varphi_j|^p\;w\;dx,$$
$$I_2:=\fint_{B_j}\int_{B_j}\Big(\frac{|\varphi_j(x)-\varphi_j(y)|\max\{u_j(x),u_j(y)\}}{|x-y|^{s}}\Big)^p\frac{ w(x)w(y)}{w(B_{x,y})}\;dx\;dy,$$
$$I_3:=\Bigg(\sup_{y\in \supp\,\varphi_j}\int_{\R^n\setminus B_j}\frac{|u_j(x)|^{p-1}}{|x-y|^{sp}}\frac{w(x)}{w(B_{x,y})}\;dx\Bigg)\fint_{B_j}u_j\varphi_j^p w\;dx,$$
$$I_4:=\fint_{B_j}|f|u_j\varphi_j^p\;dx,\mbox{ and }I_5:=\fint_{B_j}|g||u|^{p-1}u_j\varphi_j^p\;dx.$$
\textbf{Estimate of $I_1$:} Using the properties of $\varphi_j$ and the fact that $u_j\leq k_j$, we get 
\begin{align}\label{SAA3}
    I_1\leq \frac{2^{(j+1)p}k_j^p}{r_j^p}A_j.
\end{align}
\textbf{Estimate of $I_2$:} Taking into account Lemma \ref{intlem} and using the properties of $\varphi_j$, we deduce
\begin{align}\label{SA3}
    I_2&\leq \frac{C2^{jp}}{r_j^p}\fint_{B_j}\int_{B_j}u_j^p(y)|x-y|^{(1-s)p}\frac{ w(x)w(y)}{w(B_{x,y})}\;dx\;dy\leq \frac{C2^{jp}k_j^p}{r_j^{sp}}A_j,
\end{align}
for some constant $C=C(n,p,s,[w]_p)>0$.\\
\textbf{Estimate of $I_3$:}
Using (\ref{T17}) and (\ref{T18}), proceeding as in (\ref{T19}) one has
\begin{align}\label{S1}
    I_3&\leq C2^{j(p+sp+np)}k_j A_j\int_{\R^n\setminus B_j}\frac{u_j(x)^{p-1}}{|x-x_0|^{sp}}\frac{w(x)}{w(B_{x,x_0})}\;dx\nonumber\\
    &\leq C2^{j(p+sp+np)}k_j A_j\int_{\R^n\setminus B_j}\frac{(k_j+(u(x))_{-})^{p-1}}{|x-x_0|^{sp}}\frac{w(x)}{w(B_{x,x_0})}\;dx\nonumber\\
    &\leq C2^{j(p+sp+np)}k_j A_j\Big(\int_{\R^n\setminus B_j}\frac{k_j^{p-1}}{|x-x_0|^{sp}}\frac{w(x)}{w(B_{x,x_0})}\;dx+\int_{\R^n\setminus B_j}\frac{(u(x))_{-}^{p-1}}{|x-x_0|^{sp}}\frac{w(x)}{w(B_{x,x_0})}\;dx\Big),
\end{align}
for some constant $C=C(n,p,s,[w]_p)>0$.
Due to Lemma \ref{intlem} and the facts $u\geq 0$ in $B_R(x_0)$ and $k_j^{p-1}\geq d^{p-1}\geq \big(\frac{r}{R}\big)^p\Tail^{p-1}(u_-;x_0,R)$; the estimate (\ref{S1}) yields
\begin{align}\label{SA4}
    I_3&\leq C2^{j(p+sp+np)}k_jA_j\Big(\frac{k_j^{p-1}}{r_j^{sp}}+\frac{\Tail^{p-1}(u_-;x_0,R)}{R^p}\Big)\nonumber\\
    &\leq C2^{j(p+sp+np)}k_jA_j\Big(\frac{k_j^{p-1}}{r_j^{sp}}+\frac{k_j^{p-1}}{r^p}\Big)\nonumber\\
    &\leq \frac{C2^{j(p+sp+np)}k_jA_j}{r_j^p}\Big(k_j^{p-1}r_j^{p-sp}+k_j^{p-1}\frac{r_j^p}{r^p}\Big)\nonumber\\
    &\leq \frac{C2^{j(p+sp+np)}k_j^p}{r_j^p}A_j,
\end{align}
for some constant $C=C(n,p,s,[w]_p)>0$.\\
\textbf{Estimate of $I_4$:}
Using {the fact that}
$$
k_j^{p-1}\geq d^{p-1}\geq r^p \big(\fint_{B_{2r}(x_0)}\Big(\frac{|f|}{w}\Big)^qw\;dx\big)^\frac{1}{q}
$$ 
and H\"{o}lder's inequality, we obtain the following:
\begin{align}\label{SA5}
    I_4\leq \fint_{B_j}|f|u_j\varphi_j^p\;dx&\leq k_j\fint_{B_j}\frac{|f|}{w}\chi_{\{B_j:u\leq k_j\}}w\;dx\leq k_j\Big(\fint_{B_j}\Big(\frac{|f|}{w}\Big)^qw\;dx\Big)^\frac{1}{q}A_j^{1-\frac{1}{q}}\nonumber\\
    &\leq Ck_j\Big(\fint_{B_{2r}(x_0)}\Big(\frac{|f|}{w}\Big)^qw\;dx\Big)^\frac{1}{q}A_j^{1-\frac{1}{q}}\nonumber\\
    &\leq \frac{Ck_jd^{p-1}}{r^p}A_j^{1-\frac{1}{q}}\leq \frac{Ck_j^p}{r_j^p}A_j^{1-\frac{1}{q}},
\end{align}
for some constant $C=C(n,p,[w]_p)>0$.\\
\textbf{Estimate of $I_5$:} Utilizing (\ref{conditiong}), we have
\begin{align}\label{SA6}
    I_5\leq \fint_{B_j}|g|~|u|^{p-1}u_j\;dx\leq k_j^p \Big(\fint_{B_j}\Big(\frac{|g|}{w}\Big)^qw\;dx\Big)^\frac{1}{q}A_j^{1-\frac{1}{q}}\leq \frac{M k_j^p}{r_j^p}A_j^{1-\frac{1}{q}}.
\end{align}
Incorporating (\ref{SAA3}), (\ref{SA3}), (\ref{SA4}), (\ref{SA5}), and (\ref{SA6}) into (\ref{SA2}), and applying the resulting estimate in (\ref{SA1}) to deduce 
\begin{align}\label{SA7}
    A_{j+1}^\frac{1}{\kappa}(k_j-k_{j+1})^p\leq C2^{j(p+np+sp)}k_j^p\Big(A_j+A_j^{1-\frac{1}{q}}\Big)\leq C2^{j(p+np+sp)}k_j^pA_j^{1-\frac{1}{q}},
\end{align}
for some constant $C=C(n,p,s,\Lambda,\alpha,C_1,C_2,[w]_p,M)>0$. In the last inequality, we have also used the fact that $A_j\leq 1$. Thus, (\ref{SA7}) yields
\begin{align}
    A_{j+1}\leq C2^{j(p+np+sp)\kappa}\Big(\frac{k_j}{k_j-k_{j+1}}\Big)^{p\kappa}A_j^{(1-\frac{1}{q})\kappa}\leq C2^{j(2p+np+sp)\kappa}A_j^{(1-\frac{1}{q})\kappa},
\end{align}
for some constant $C=C(n,p,s,\Lambda,\alpha,C_1,C_2,[w]_p,M)>0$. Therefore, $\{A_j\}_{j=0}^{\infty}$ satisfies the assumption of Lemma \ref{seqconv} with $a=C$, $b=2^{(2p+np+sp)\kappa}>1$ and $\beta=(1-\frac{1}{q})\kappa-1>0$ (as $q>\kappa'$). Moreover, since \eqref{assumption} holds, we obtain
$$
k_0=\frac{3}{2}\zeta k\leq 2\zeta k-\frac{d}{2}-\epsilon
$$ and therefore using \eqref{L0}, we arrive at the estimate
$$A_0\leq \frac{w(\{B_{2r}(x_0):u\leq 2\zeta k-\frac{d}{2}-\epsilon\})}{w(B_{2r}(x_0))}\leq \frac{C}{\tau\log(\frac{1}{2\zeta})},$$
for some constant $C=C(n,p,s,\Lambda,\alpha,C_1,C_2,[w]_p,M)>0$ and for every $\zeta\in(0,\frac{1}{4})$ {satisfying \eqref{assumption}}
Choose $\zeta=\zeta(n,p,s,q,\kappa,\Lambda,\alpha,C_1,C_2,[w]_p,M,\tau)\in(0,\frac{1}{4})$ such that $$\frac{C}{\tau\log(\frac{1}{2\zeta})}\leq a^{-\frac{1}{\beta}}b^{-\frac{1}{\beta^2}}.$$ Hence, by Lemma \ref{seqconv}, we conclude that $A_j\to 0$ as $j\to\infty$. Consequently, $\inf_{B_r(x_0)}u\geq \zeta k$. Thus, the proof is complete.}
\end{proof}

\subsection{De-Giorgi type lemma}
This subsection is mainly devoted to the proof of the semicontinuity result stated in Theorem \ref{lscthm}. To this end, we establish Proposition \ref{lowersemi}, by adapting the argument in the proof of \cite[Theorem 2.1]{Liao}. This yields a semicontinuous representative under a suitable De Giorgi-type result in Lemma \ref{semlem}.

Let $B_\rho(x_0)\Subset\Omega$ and introduce the real numbers $a,L$ and $\mu^-$ satisfying
\begin{equation}\label{seqn}
a\in(0,1),\quad L>0,\quad \mu^{-}\leq\inf_{B_\rho(x_0)}\,u.
\end{equation}
We say that 
$u$ satisfies the property $(\mathcal{D})$, if there exists a constant ${\sigma}\in(0,1)$ depending on $a,\mu^-,L$ and other data (may depend on the partial differential equation and will be made
precise in Lemma \ref{semlem}), but independent of $\rho$, such that if
$$
w(\{B_\rho(x_0):u\leq \mu^-+L\})\leq {\sigma}\, w(B_\rho(x_0)),
$$
then $u\geq \mu^-+a L$ in $B_{{\frac{\rho}{2}}}(x_0)$.
Further, we define
$$\mathcal{F}:=\Bigg\{x\in\Omega:|u(x)|<\infty,\quad \lim_{r\to 0}\fint_{B_r(x)}|u(x)-u(y)|w(y)dy=0\Bigg\}.$$
Using Lebesgue's differentiation theorem, it follows that
$|\mathcal{F}|=|\Omega|$, {(see \cite[Theorem 1.33]{EG}).}
\begin{prop}\label{lowersemi}
Suppose $u$ is a locally integrable and locally essentially bounded below function
    in $\Omega$ satisfying the property $(\mathcal{D})$. Then $u(x)=u_*(x)$ for every $x\in\mathcal{F}$, where $$u_*(x):=\lim_{r\to 0}\inf_{B_r(x)}u.$$ In particular, $u_*$ is a lower semicontinuous representative of $u$ in $\Omega$.
\end{prop}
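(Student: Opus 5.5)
Fix $x\in\mathcal{F}$. By definition $u_*(x)=\lim_{r\to0}\inf_{B_r(x)}u$; this limit exists in $(-\infty,\infty]$ because $r\mapsto\inf_{B_r(x)}u$ is nonincreasing in $r$ and $u$ is locally essentially bounded below. I will prove the two inequalities $u_*(x)\le u(x)$ and $u_*(x)\ge u(x)$ separately.

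\textbf{Upper bound.} Since the essential infimum on $B_r(x)$ is at most the $w$-average on that ball,
\[
\inf_{B_r(x)}u\le \fint_{B_r(x)}u\,w\,dy\le u(x)+\fint_{B_r(x)}|u(x)-u(y)|\,w(y)\,dy .
\]
Letting $r\to0$ and using $x\in\mathcal{F}$ gives $u_*(x)\le u(x)$.

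\textbf{Lower bound.} This is the main step and is where property $(\mathcal{D})$ is used. Argue by contradiction and suppose $u_*(x)<u(x)$. Fix $a\in(0,1)$, say $a=\tfrac12$. For small $\rho$ set $\mu^-_\rho:=\inf_{B_\rho(x)}u$, which is finite and increases to $u_*(x)$ as $\rho\to0$. Set $L_\rho:=\tfrac12\,(u(x)-\mu^-_\rho)$, so $L_\rho>0$ and $L_\rho\ge L_0:=\tfrac12\,(u(x)-u_*(x))>0$ when $u_*(x)$ is finite. (If $u_*(x)=+\infty$ then $u(x)\ge u_*(x)$ trivially, since $u(x)<\infty$ would already contradict $u_*(x)\le u(x)$.)

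By Chebyshev's inequality with respect to $w$, on the set $\{u\le \mu^-_\rho+L_\rho\}$ we have $|u(x)-u(y)|\ge u(x)-\mu^-_\rho-L_\rho=L_\rho\ge L_0$. Hence
\[
\frac{w(\{B_\rho(x):u\le\mu^-_\rho+L_\rho\})}{w(B_\rho(x))}\le \frac{1}{L_0}\fint_{B_\rho(x)}|u(x)-u(y)|\,w(y)\,dy\longrightarrow0 .
\]

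\textbf{Obstacle: uniformity of $\sigma$.} The constant $\sigma$ in $(\mathcal{D})$ depends on $a$, $\mu^-$ and $L$. Here $\mu^-_\rho$ and $L_\rho$ vary with $\rho$, so I need $\sigma$ bounded below uniformly. I would handle this in one of two ways.

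\emph{Option 1.} Freeze the parameters: take $\mu^-:=\mu^-_{\rho_0}$ for a fixed $\rho_0$ and apply $(\mathcal{D})$ on $B_\rho(x)$ for $\rho<\rho_0$. This is legitimate because $\mu^-\le\inf_{B_\rho(x)}u$ still holds. With $L:=\tfrac12\,(u(x)-\mu^-)$ fixed, $\sigma$ is fixed and independent of $\rho$, as $(\mathcal{D})$ requires.

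\emph{Option 2.} Use that $\sigma$ depends on $\mu^-$ and $L$ only through bounded quantities, which hold here since $\mu^-_\rho\in[\mu^-_{\rho_0},u_*(x)]$ and $L_\rho\in[L_0,\tfrac12\,(u(x)-\mu^-_{\rho_0})]$.

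I will use Option 1. For $\rho$ small the density above is at most $\sigma$, so $(\mathcal{D})$ gives
\[
u\ge \mu^-+aL=\mu^-_{\rho_0}+\tfrac12\,(u(x)-\mu^-_{\rho_0})\cdot 2a \quad\text{in } B_{\rho/2}(x).
\]
Choosing $a$ close to $1$, this yields $u_*(x)\ge \mu^-_{\rho_0}+a\,(u(x)-\mu^-_{\rho_0})/1$ after adjusting $L$ to $u(x)-\mu^-_{\rho_0}-\varepsilon$. Letting $a\to1$ and $\varepsilon\to0$ gives $u_*(x)\ge u(x)$, the desired contradiction.

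\textbf{Conclusion.} Therefore $u=u_*$ on $\mathcal{F}$, and $|\Omega\setminus\mathcal{F}|=0$, so $u_*$ is a representative of $u$. It remains to check that $u_*$ is lower semicontinuous everywhere in $\Omega$. For any $z\in B_r(x)$ and any $t$ small enough that $B_t(z)\subset B_r(x)$, we have $\inf_{B_t(z)}u\ge\inf_{B_r(x)}u$, so $u_*(z)\ge \inf_{B_r(x)}u$ on $B_r(x)$. Hence $\liminf_{z\to x}u_*(z)\ge\inf_{B_r(x)}u$ for every $r$, and letting $r\to0$ gives $\liminf_{z\to x}u_*(z)\ge u_*(x)$.
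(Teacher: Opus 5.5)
Your proof is correct and follows the paper's route. You freeze $\mu^-$ as the infimum over a fixed ball so that $\sigma$ from $(\mathcal{D})$ does not depend on $\rho$, use $x\in\mathcal{F}$ to make the $w$-density of $\{u\le\mu^-+L\}$ in $B_\rho(x)$ small (your Chebyshev bound is the direct form of the paper's contradiction argument), and then apply $(\mathcal{D})$. The paper avoids your limits $a\to1$, $\varepsilon\to0$ by choosing $L$ and $a$ with $\mu^-\le u_*(x)<\mu^-+aL<\mu^-+L<u(x)$, so that a single application already contradicts the definition of $u_*(x)$.

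Tidy the last step. For your first choice $L=\tfrac12(u(x)-\mu^-)$ you have $\mu^-+aL=\mu^-+\tfrac{a}{2}(u(x)-\mu^-)$, not $\mu^-+a(u(x)-\mu^-)$. With $L=u(x)-\mu^--\varepsilon$ the Chebyshev constant becomes $1/\varepsilon$ instead of $1/L_0$, and the conclusion is $u_*(x)\ge\mu^-+a(u(x)-\mu^--\varepsilon)$, which still gives $u_*(x)\ge u(x)$ in the limit.
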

\begin{proof}
First we observe that, for every $x\in\mathcal{F}$, 
    \begin{align*}
        u(x)=\lim_{r\to 0}\fint_{B_r(x)}u(y)w(y)dy\geq \lim_{r\to 0}\inf_{B_r(x)}u=u_*(x).
    \end{align*}
    To prove the converse, let us assume by contradiction that there exists a point $x_0\in\mathcal{F}$ such that $u(x_0)>u_*(x_0)$. We fix $R>0$ such that $B_R(x_0)\Subset\Omega$ and let $\mu^-,L$ satisfy 
    $\mu^-:=\inf_{B_R(x_0)}\,u\leq u_*(x_0)<\mu^-+L<u(x_0)$. Now after fixing the quantities $u_*(x_0),\mu^-$ and $L$, we choose {$a\in(0,1)$,} such that 
    \begin{align}\label{semi1}
        u_*(x_0)<\mu^-+a L,\text{ i.e. }\frac{u_*(x_0)-\mu^-}{L}<a<1.
    \end{align}
    Now with such $a$ being fixed in \eqref{semi1}, we determine $\nu$ depending on $a,\mu^-,L$ and other given data but independent of $r$ as per the property $(\mathcal{D})$. Next, we claim that there must be some $\rho\in(0,{R})$ such that
{\begin{equation}\label{semd}
    w(\{B_\rho(x_0):u\leq \mu^-+L\})< \sigma w(B_\rho(x_0)).
    \end{equation}}
Otherwise, we would arrive at the estimate
{\begin{align*}
    \int_{B_\rho(x_0)}|u(x_0)-u(y)|w(y)\;dy
    &\geq \int_{\{y\in B_\rho(x_0):u(y)\leq \mu^-+L\}}(u(x_0)-\mu^--L)w(y)\;dy\nonumber\\
    &=(u(x)-\mu^--L)w(\{y\in B_\rho(x_0):u(y)\leq \mu^-+L\})\nonumber\\
    &\geq \sigma (u(x_0)-\mu^--L) w(B_\rho(x_0)),
\end{align*}}
for all $\rho\in(0,R)$, which is a contradiction to the fact that $x_0\in\mathcal{F}$, since $\nu$ does not depend on $\rho$. Now by the property $(\mathcal{D})$, the measure information \eqref{semd} and the choice of $a$ in \eqref{semi1} imply that
$$
u\geq \mu^-+aL>u_*(x_0)\text{ in }B_{{\frac{\rho}{2}}}(x_0),
$$
which contradicts the definition of $u_*(x_0)$. As a result, we must have, $u_*(x)\geq u(x)$ for every $x\in\mathcal{F}$.
\end{proof}

\begin{Lemma}[De-Giorgi type result]\label{semlem}
   {Suppose $\mathcal{F}$ satisfies (\ref{F}) where ${f}/{w},{g}/{w}\in L^q_{\mathrm{loc}}(\Omega,w)$ with $q>\kappa'$, and $f,g$ satisfy condition (\ref{conditiong}).} Let $u$ be a weak supersolution of \eqref{ME} which is bounded below in $\mb{R}^n$. Suppose that $B_r(x_0)\Subset\Omega$ with $r\in(0,1]$, $\mu_1\leq \inf_{B_r(x_0)}u$, $\mu_2\leq \inf_{\R^n}u$, and $\delta\in (0,1),\;L>0$. Then there exists a constant $\sigma=\sigma(n,p,q,s,\Lambda,\alpha,C_1,C_2,$ $\mu_1,\mu_2,[w]_p,L,M,\delta)\in(0,1)$, such that if $$w(\{B_r(x_0):u\leq \mu_1+L\})\leq \sigma w(B_r(x_0)),$$
then $u\geq \mu_1+\delta L$ in $B_{\frac{r}{2}}(x_0)$.
\end{Lemma}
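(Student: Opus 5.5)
We run a De Giorgi iteration on the sublevel sets of $u$, closely following Step-II of Lemma \ref{expansionofpositivity}. The difference is that the smallness of the initial density is now an assumption rather than something derived from the logarithmic estimate.

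\textbf{Setup.} Set, for $j\geq 0$,
\[
r_j=\tfrac{r}{2}(1+2^{-j}),\qquad \tilde r_j=\tfrac{r_j+r_{j+1}}{2},\qquad B_j=B_{r_j}(x_0),
\]
\[
k_j=\mu_1+\delta L+2^{-j}(1-\delta)L,\qquad u_j=(k_j-u)_+,\qquad A_j=\frac{w(\{B_j:u\leq k_j\})}{w(B_j)} .
\]
Then $k_0=\mu_1+L$ and $k_j\downarrow \mu_1+\delta L$. The hypothesis reads $A_0\le \sigma$, up to the doubling constant from $B_0=B_r(x_0)$.

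Since $u\ge\mu_1$ in $B_r(x_0)$, we have $0\le u_j\le k_j-\mu_1\le L$ in $B_j$. Take cutoffs $\varphi_j\in C_c^\infty(\tilde B_j)$ with $\varphi_j\equiv1$ on $B_{j+1}$ and $|\nabla\varphi_j|\le C2^j/r$.

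\textbf{Energy step.} Exactly as in \eqref{SA1}, Theorem \ref{WIT} gives
\[
A_{j+1}^{1/\kappa}(k_j-k_{j+1})^p\le Cr^p\Big(\fint_{B_j}\varphi_j^p|\nabla u_j|^pw+\fint_{B_j}u_j^p|\nabla\varphi_j|^pw\Big).
\]
Here $k_j-k_{j+1}=2^{-j-1}(1-\delta)L$. We then bound the right side with Lemma \ref{Energy}, applied to the supersolution with $v_-=(u-k_j)_-$.

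\textbf{Term-by-term bounds.}
\begin{itemize}
\item The local gradient term and the nonlocal local term are each $\le C2^{jp}L^pr^{-p}A_j$, as in \eqref{SAA3}–\eqref{SA3}, using $r\le1$ and Lemma \ref{intlem}.
\item The tail term is where global boundedness below enters. For $x\notin B_j$ we have $u_j(x)\le (k_j-\mu_2)_+\le \mu_1+L-\mu_2$ (note $\mu_2\le\mu_1$ may be assumed). So the $\sup_y$ integral is bounded by $C2^{j(n p+sp)}(\mu_1+L-\mu_2)^{p-1}r^{-sp}$, via the comparison of $w(B_{x,y})$ with $w(B_{x,x_0})$ as in the proof of Lemma \ref{EWH} together with Lemma \ref{intlem}. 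This gives a contribution $\le C2^{jN}r^{-p}(\mu_1+L-\mu_2)^{p-1}L\,A_j$.
\item The $f$ term: by Hölder, $\fint |f|u_j\varphi_j^p\le L\,\|f/w\|_{L^q(B_j,w)}$-average $\cdot A_j^{1-1/q}$. Condition \eqref{conditiong} applied to $f$ bounds the average by $Mr^{-p}$.
\item The $g$ term: $|u|^{p-1}\le \max(|\mu_1|,|\mu_1+L|)^{p-1}$ on $\{u\le k_j\}\cap B_j$. With \eqref{conditiong}, it is $\le C(\mu_1,L)Mr^{-p}A_j^{1-1/q}$.
\end{itemize}

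\textbf{Recursion.} The powers of $r$ cancel, and we obtain
\[
A_{j+1}\le C\,b^jA_j^{(1-1/q)\kappa},\qquad C=C(\text{data},\mu_1,\mu_2,L,\delta),
\]
where $\beta=(1-1/q)\kappa-1>0$ because $q>\kappa'$. Lemma \ref{seqconv} then yields $A_j\to0$ provided $\sigma\le C^{-1/\beta}b^{-1/\beta^2}$. Since $A_j\to 0$ and $k_j\downarrow\mu_1+\delta L$, we conclude $u\geq\mu_1+\delta L$ a.e. in $B_{r/2}(x_0)$.

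\textbf{Main obstacle.} The delicate points are the terms that are not homogeneous in $L$: the tail and the $f,g$ terms. Because $\sigma$ may depend on $\mu_1,\mu_2,L$, it is enough to absorb all of them into constants multiplying $A_j^{1-1/q}$. To do this we need $f$ to satisfy \eqref{conditiong}, which is assumed here, and $u$ bounded below globally so that the tail term is controlled.
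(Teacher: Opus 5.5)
Your proposal is correct and follows essentially the same route as the paper. Both use the radii $r_j=(1+2^{-j})\frac{r}{2}$, the levels $k_j=\mu_1+\delta L+2^{-j}(1-\delta)L$, the weighted Sobolev inequality combined with the supersolution energy estimate of Lemma \ref{Energy}, the global lower bound $\mu_2$ to control the tail, condition \eqref{conditiong} for both $f$ and $g$, and finally Lemma \ref{seqconv} with $\beta=(1-\frac{1}{q})\kappa-1>0$. Two small points in your favour: your bound $|u|^{p-1}\le\max\{|\mu_1|,|\mu_1+L|\}^{p-1}$ on $\{u\le k_j\}\cap B_j$ for the $g$-term is more careful than the paper's $|u|^{p-1}u_i\le\tilde L^p$, and $A_0\le\sigma$ holds exactly, with no doubling constant, since $B_0=B_r(x_0)$.
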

\begin{proof}
    For $i\in\mathbb{N}\cup\{0\}$, we denote $r_i:=(1+2^{-i})\frac{r}{2}$, $\Tilde{r}_i:=\frac{r_i+r_{i+1}}{2}$, $B_{i}:=B_{r_i}(x_0)$, and $\Tilde{B}_i:=B_{\Tilde{r}_i}(x_0)$. We further define $k_i:=\mu_1+L+(1-2^{-i})(\delta-1)L$, $\Tilde{k}_i:=\frac{k_i+k_{i+1}}{2}$. Let $u_i:=(k_i-u)_+$ and $\Tilde{u}_i:=(\Tilde{k}_i-u)_+$. We observe that 
    \begin{align}\label{boundofu}
        u_i\leq(\mu_1+L-\mu_2)_+\leq\max\{(\mu_1+L-\mu_2)_+,1\}:=\Tilde{L}.
    \end{align}
    It is clear that $u_{i+1}\leq \Tilde{u}_i\leq u_i$. Let $\{\varphi_i\}_{i=0}^{\infty}\subset C_c^\infty(\Tilde{B}_i)$ be a sequence of nonnegative function such that $0\leq \varphi_i\leq 1$ in $\Tilde{B}_i$, $\varphi_i\equiv 1$ in $\Tilde{B}_i$ and $|\nabla\varphi_i|\leq \frac{2^{i+1}}{r_i}$. We define the sequence $\{A_i\}_{i=0}^{\infty}$ by $$A_i:=\frac{w(\{B_i: u\leq k_i\})}{w(B_i)}.$$
    We claim that this sequence satisfies the assumption of Lemma \ref{seqconv}. To this end, we estimate the following integral:
\begin{align}\label{SAA1}
    A_{i+1}^\frac{1}{\kappa}(k_i-k_{i+1})^p&= \Big(\frac{1}{w(B_{i+1})}\int_{\{B_{i+1}:u\leq k_{i+1}\}}(k_i-k_{i+1})^{p\kappa}\;w\;dx\Big)^\frac{1}{\kappa}\nonumber\\
    &\leq \Big(\frac{1}{w(B_{i+1})}\int_{\{B_{i+1}:u\leq k_{i+1}\}}(k_i-u)^{p\kappa}\;w\;dx\Big)^\frac{1}{\kappa}\nonumber\\
    &\leq \Big(\fint_{B_{i+1}}u_i^{p\kappa}\;w\;dx\Big)^\frac{1}{\kappa}\nonumber\\
    &\leq C\Big(\fint_{B_{i}}(\varphi_iu_i)^{p\kappa}\;w\;dx\Big)^\frac{1}{\kappa}\nonumber\\
    &\leq Cr_i^p\fint_{B_i}|\nabla (\varphi_iu_i)|^pw\;dx\nonumber\\
    &\leq Cr_i^p\Big(\fint_{B_i}\varphi_i^p|\nabla u_i|^pw\;dx+\fint_{B_i} u_i^p|\nabla \varphi_i|^pw\;dx\Big),
\end{align}
for some constant $C=C(n,p,[w]_p)>0$. Since $u$ is a weak supersolution of \eqref{ME}, by Lemma \ref{Energy} we have 
\begin{align}\label{SAA2}
   \fint_{B_i}\varphi_i^p|\nabla u_i|^pw\;dx\leq C(I_1+I_2+I_3+I_4+I_5), 
\end{align}
for some constant $C=C(p,\Lambda,\alpha,C_1,C_2,[w]_p)>0$. Here
$$I_1:=\fint_{B_i} u_i^p|\nabla \varphi_i|^p\;w\;dx,$$
$$I_2:=\fint_{B_i}\int_{B_i}\Big(\frac{|\varphi_i(x)-\varphi_i(y)|\max\{u_i(x),u_i(y)\}}{|x-y|^{s}}\Big)^p\frac{ w(x)w(y)}{w(B_{x,y})}\;dx\;dy,$$
$$I_3:=\big(\fint_{B_i}u_i\varphi_i^p w\;dx\big)\sup_{y\in \supp (\Tilde{B}_i)}\int_{\R^n\setminus B_i}\frac{|u_i(x)|^{p-1}}{|x-y|^{sp}}\frac{w(x)}{w(B_{x,y})}\;dx,$$
$$I_4:=\fint_{B_i}|f|u_i\varphi_i^p\;dx,\mbox{ and }I_5:=\fint_{B_i}|g||u|^{p-1}u_i\varphi_i^p\;dx.$$
\textbf{Estimate of $I_1$:} Using properties of $\varphi_i$ and (\ref{boundofu}), we get 
\begin{align}\label{SAAA3}
    I_1\leq \frac{2^{(i+1)p}\Tilde{L}^p}{r_i^p}A_i.
\end{align}
\textbf{Estimate of $I_2$:} Using Lemma \ref{intlem} and (\ref{boundofu}) along with $0<r\leq 1$, we deduce
\begin{align}\label{SAA4}
    I_2&\leq \frac{C2^{ip}}{r_i^p}\fint_{B_i}\Big(\int_{B_i}|x-y|^{p-sp}\frac{ w(x)}{w(B_{x,y})}\;dx\Big)u_i^pw\;dy\nonumber\\
    &\leq \frac{C2^{ip}}{r_i^{sp}}\fint_{B_i}u_i^p(y)w(y)\;dy\leq \frac{C2^{ip}\Tilde{L}^p}{r_i^{p}}A_i,
\end{align}
for some constant $C=C(n,p,s,[w]_p)>0$.\\
\textbf{Estimate of $I_3$:}
Using (\ref{T17}) and (\ref{T18}), proceeding as in (\ref{T19}) and using $0<r\leq 1$, one has
\begin{align}\label{SS1}
    I_3&\leq C2^{i(p+sp+np)}\Tilde{L}A_i\int_{\R^n\setminus B_i}\frac{|u_i(x)|^{p-1}}{|x-x_0|^{sp}}\frac{w(x)}{w(B_{x,x_0})}\;dx\nonumber\\
    &\leq \frac{C2^{i(p+sp+np)}\Tilde{L}^p}{r_i^{p}}A_i
\end{align}
for some constant $C=C(n,p,s,\Lambda,[w]_p)>0$.\\
\textbf{Estimate of $I_4$:}
Using H\"{o}lder's inequality and \eqref{conditiong}, we obtain
\begin{align}\label{SAA5}
    I_4\leq \fint_{B_i}|f|u_i\varphi_i^p\;dx&\leq \Tilde{L}\big(\fint_{B_i}|\frac{f}{w}|^qw\;dx\big)^\frac{1}{q}A_i^{1-\frac{1}{q}}\nonumber\\
    &\leq \frac{M\Tilde{L}}{r_i^p}A_i^{1-\frac{1}{q}}\leq \frac{M\Tilde{L}^p}{r_i^p}A_i^{1-\frac{1}{q}},
\end{align}
where we have used the fact $\Tilde{L}\geq 1$.\\
\textbf{Estimate of $I_5$:} Utilizing (\ref{conditiong}), we have
\begin{align}\label{SAA6}
    I_5\leq \fint_{B_i}|g|~|u|^{p-1}u_i\;dx\leq \Tilde{L}^p \big(\fint_{B_i}|\frac{g}{w}|^qw\;dx\big)^\frac{1}{q}A_i^{1-\frac{1}{q}}\leq \frac{M \Tilde{L}^p}{r_i^p}A_i^{1-\frac{1}{q}}
\end{align}
Incorporating (\ref{SAAA3}), (\ref{SAA4}), (\ref{SS1}), (\ref{SAA5}), and (\ref{SAA6}) into (\ref{SAA2}), and applying the resulting estimate in (\ref{SA1}) to deduce 
\begin{align}\label{SAA7}
    A_{i+1}^\frac{1}{\kappa}(k_i-k_{i+1})^p\leq C2^{i(p+np+sp)}\Tilde{L}^p\Big(A_i+A_i^{1-\frac{1}{q}}\Big)\leq C2^{i(p+np+sp)}A_i^{1-\frac{1}{q}},
\end{align}
for some constant $C=C(n,p,s,\Lambda,\alpha,C_1,C_2,\mu_1,\mu_2,[w]_p,L,M,\delta)>0$.
 In the last inequality, we used the fact that $A_i\leq 1$. Thus, (\ref{SAA7}) yields
\begin{align}
    A_{i+1}\leq C2^{i(p+np+sp)\kappa}\Big(\frac{1}{k_i-k_{i+1}}\Big)^{p\kappa}A_i^{(1-\frac{1}{q})\kappa}\leq C2^{i(2p+np+sp)\kappa}A_i^{\big(1-\frac{1}{q}\big)\kappa},
\end{align}
for some constant $C=C(n,p,s,\Lambda,\alpha,C_1,C_2,\mu_1,\mu_2,[w]_p,L,M,\delta)>0$. Therefore, the sequence $\{A_i\}_{i=0}^{\infty}$ satisfies the assumption of Lemma \ref{seqconv} with $a=C$, $b=2^{(2p+np+sp)\kappa}>1$, and $\beta=(1-\frac{1}{q})\kappa-1>0$ (as $q>\kappa'$). We choose $\sigma:=a^{-\frac{1}{\beta}}b^{-\frac{1}{\beta^2}}$. Thus, by Lemma \ref{seqconv}, if $A_0\leq \sigma$, then $A_i\to 0$ as $i\to\infty$. Consequently, $u\geq \mu_1+\delta L$ in $B_\frac{r}{2}(x_0)$.
\end{proof}

\section{Proof of the main results}
\subsection{Local boundedness}
\begin{proof}[Proof of Theorem \ref{Bdd}]
For $j\in\mathbb{N}\cup\{0\}$, we define 
$$
r_j=(1+2^{-j})\frac{r}{2},\,\Tilde{r_j}=\frac{r_j+r_{j+1}}{2}\text{ and }B_j=B_{r_j}(x_0),\,\Tilde{B}_j=B_{\Tilde{r}_j}(x_0).
$$
For $k>0$, we also define $k_j=(1-2^{-j})k$ and $\Tilde{k}_j=\frac{k_j+k_{j+1}}{2}$. Set $u_j=(u-k_j)_+$ and $\Tilde{u}_j=(u-\Tilde{k}_j)_+$. It is clear that $r_{j+1}\leq \Tilde{r}_j\leq r_{j}$, $k_j\leq \Tilde{k}_j\leq k_{j+1}$, and hence $u_{j+1}\leq \Tilde{u}_j\leq u_j$. Let $\varphi_j\in C^\infty_c(\Tilde{B}_j(x_0))$ be a nonnegative function such that $0\leq \varphi_j\leq 1$ in $B_j$, $|\nabla\varphi_j|\leq \frac{{2^{j+1}}}{r_j}$ in $B_j$, and $\varphi_j\equiv 1$ in $B_{j+1}$. Using the fact $\Tilde{u}_j\geq k_{j+1}-\Tilde{k}_j=\frac{k}{2^{j+2}}$ in $\supp(u_{j+1})$ and Lemma \ref{comlem}, we estimate the following integral:
\begin{align}\label{T11}
    \Big(\fint_{B_{j+1}}u_{j+1}^p\;w\;dx\Big)^\frac{1}{\kappa}&\leq \Big(\fint_{B_{j+1}}\Tilde{u}_{j}^p\big(\frac{2^{j+2}\Tilde{u}_{j}}{k}\big)^{p(\kappa-1)}\;w\;dx\Big)^\frac{1}{\kappa}\nonumber\\
    &\leq \big(\frac{2^{j+2}}{k}\big)^\frac{p(\kappa-1)}{\kappa}\Big(\fint_{B_{j+1}}\Tilde{u}_{j}^{\kappa p}\;w\;dx\Big)^\frac{1}{\kappa}\nonumber\\
    &\leq \big(\frac{2^{j+2}}{k}\big)^\frac{p(\kappa-1)}{\kappa}\Big(\frac{w(B_j)}{w(B_{j+1})}\fint_{B_{j}}|\varphi_j\Tilde{u}_{j}|^{\kappa p}\;w\;dx\Big)^\frac{1}{\kappa}\nonumber\\
    &\leq C\big(\frac{2^{j+2}}{k}\big)^\frac{p(\kappa-1)}{\kappa}\Big(\fint_{B_{j}}|\varphi_j\Tilde{u}_{j}|^{\kappa p}\;w\;dx\Big)^\frac{1}{\kappa},
\end{align}
for some constant $C=C(n,p,[w]_p)>0$. Due to Theorem \ref{WIT}, we have
\begin{align}\label{T12}
    \Big(\fint_{B_{j}}|\varphi_j\Tilde{u}_{j}|^{\kappa p}\;w\;dx\Big)^\frac{1}{\kappa}&\leq Cr_j^p \fint_{B_j}|\nabla (\varphi_j\Tilde{u}_j)|^p\;w\;dx\nonumber\\
    &\leq Cr_j^p \big( \fint_{B_j}|\nabla \varphi_j|^p\Tilde{u}_j^p\;w\;dx+\fint_{B_j}|\nabla \Tilde{u}_j|^p\varphi_j^p\;w\;dx\big),
\end{align}
for some constant $C=C(n,p,[w]_p)>0$. Utilizing Lemma \ref{Energy}, we deduce
\begin{align}\label{T13}
    \fint_{B_j}|\nabla \Tilde{u}_j|^p\varphi_j^p\;w\;dx&\leq C\Big(I_1+I_2+I_3\Big)+I_4+I_5,
\end{align}
for some constant $C=C(p,\Lambda,\alpha,C_1,C_2)>0$,
where 
$$I_1:=\fint_{B_j} \Tilde{u}_j^p|\nabla \varphi_j|^p\;w\;dx,$$
$$I_2:=\fint_{B_j}\int_{B_j}\Big(\frac{|\varphi_j(x)-\varphi_j(y)|\max\{\Tilde{u}_j(x),\Tilde{u}_j(y)\}}{|x-y|^{s}}\Big)^p\frac{ w(x)w(y)}{w(B_{x,y})}\;dx\;dy,$$
$$I_3:=\big(\fint_{B_j}\Tilde{u}_j\varphi_j^p w\;dx\big)\sup_{y\in \supp \,\varphi_j}\int_{\R^n\setminus B_j}\frac{|\Tilde{u}_j(x)|^{p-1}}{|x-y|^{sp}}\frac{w(x)}{w(B_{x,y})}\;dx,$$
$$I_4:=\fint_{B_j}|f|\Tilde{u}_j\varphi_j^p\;dx,\mbox{ and }I_5:=\fint_{B_j}|g||u|^{p-1}\Tilde{u}_j\varphi_j^p \;dx.$$
\textbf{Estimate of $I_1$:} Using the properties of $\varphi_j$ and the fact $\Tilde{u}_j\leq u_j$, we get
\begin{align}\label{T14}
    I_1\leq \frac{2^{(j+1)p}}{r_j^p}\fint_{B_j} u_j^p\;w\;dx.
\end{align}\\
\textbf{Estimate of $I_2$:} Using Lemma \ref{intlem}, we observe that
\begin{align}\label{T15}
    I_2&\leq \frac{2^{jp}C}{r_j^p}\fint_{B_j}\int_{B_j}|x-y|^{(1-s)p}\Tilde{u}_j(y)^p\frac{ w(x)w(y)}{w(B_{x,y})}\;dx\;dy\nonumber\\
    &\leq \frac{2^{jp}C}{r_j^{p}}\fint_{B_j} u_j^p~w\;dx,
\end{align}
where $C=C(n,p,{s},[w]_p)>0$ is a constant. The last inequality holds due to the fact {$r_j\leq 1$}. \\
\textbf{Estimate of $I_3$:} Note that 
\begin{align}\label{note}
    u_j=u-k_j=u-\Tilde{k}_j+\Tilde{k}_j-k_j\geq \Tilde{k}_j-k_j\geq \frac{k_{j+1}-k_j}{2}=\frac{k}{2^{j+2}} \mbox{ in } \supp(\Tilde{u}_j).
\end{align}
Using $0\leq \varphi_j\leq 1$, we deduce
\begin{align}\label{T16}
I_3&=\big(\fint_{B_j}\Tilde{u}_j\varphi_j^p\;w\;dx\big)\sup_{y\in \Tilde{B}_j}\int_{\R^n\setminus B_j}\frac{|\Tilde{u}_j(x)|^{p-1}}{|x-y|^{sp}}\frac{w(x)}{w(B_{x,y})}\;dx\nonumber\\
    &\leq \frac{2^{(j+2)(p-1)}}{k^{p-1}}\big(\fint_{B_j}u_j^{p}w\;dx\big)\sup_{y\in \Tilde{B}_j}\int_{\R^n\setminus B_j}\frac{|\Tilde{u}_j(x)|^{p-1}}{|x-y|^{sp}}\frac{w(x)}{w(B_{x,y})}\;dx.
\end{align}
For $x\in \R^n\setminus B_j,\;y\in \Tilde{B}_{j}$, we have
\begin{align}\label{T17}
    \frac{|x-x_0|}{|x-y|}\leq 1+\frac{|y-x_0|}{|x-y|}\leq 1+\frac{\Tilde{r}_j}{(r_j-\Tilde{r}_j)}\leq 2^{j+3}.
\end{align}
Furthermore, we have $B_{x,y}\cup B_{x,x_0}\subset B_{3|x-x_0|}(x_0)$. This together with Lemma \ref{comlem} and \eqref{T17} reveals 
\begin{align}\label{T18}
    \frac{w(B_{x,x_0})}{w(B_{x,y})}=\frac{w(B_{x,x_0})}{w(B_{3|x-x_0|}(x_0))}\frac{w(B_{3|x-x_0|}(x_0))}{w(B_{x,y})}&\leq C\Big(\frac{|B_{x,x_0}|}{|B_{3|x-x_0|}(x_0)|}\Big)^\sigma\Big(\frac{|B_{3|x-x_0|}(x_0)|}{|B_{x,y}|}\Big)^p\nonumber\\
    &\leq C\Big(\frac{|x-x_0|}{|x-y|}\Big)^{np}\leq C2^{(j+3)np},
\end{align}
for some constant $C=C(n,p,[w]_p)>0$. Combining (\ref{T17}) and (\ref{T18}), estimate (\ref{T16}) yields
\begin{align}\label{T19}
    I_3&\leq \frac{C2^{(j+2)(p-1)+(j+3)(sp+np)}}{k^{p-1}}\big(\fint_{B_j}u_j^{p}w\;dx\big)\int_{\R^n\setminus B_j}\frac{|\Tilde{u}_j(x)|^{p-1}}{|x-x_0|^{sp}}\frac{w(x)}{w(B_{x,x_0})}\;dx\nonumber\\
    &\leq \frac{C2^{j(p+sp+np)}}{r_j^p}\frac{\Tail^{p-1}(\Tilde{u}_j;x_0,r_j)}{k^{p-1}}\big(\fint_{B_j}u_j^{p}w\;dx\big)\nonumber\\
    &\leq \frac{C2^{j(p+sp+np)}}{r_j^p}\frac{\Tail^{p-1}(u_+;x_0,\frac{r}{2})}{k^{p-1}}\big(\fint_{B_j}u_j^{p}w\;dx\big),
\end{align}
where $C=C(n,p,s,[w]_p)>0$.\\
\textbf{Estimate of $I_4$:} Applying H\"{o}lder's inequality, Young's inequality and $q>\kappa'$, we obtain
\begin{align}\label{T20}
    r_j^pI_4&\leq r_j^p\fint_{B_j}\frac{|f|}{w}\Tilde{u}_j\varphi_j^pw\;dx\nonumber\\
    &\leq r_j^p\Big(\fint_{B_j}\Big(\frac{|f|}{w}\Big)^qw\;dx\Big)^\frac{1}{q}\Big(\fint_{B_j}\big|\Tilde{u}_j\varphi_j\big|^{p\kappa}w\;dx\Big)^\frac{1}{p\kappa}\Big(\frac{w(\supp(\Tilde{u}_j\varphi_j) )}{w(B_j)}\Big)^{1-\frac{1}{q}-\frac{1}{p\kappa}}\nonumber\\
    &\leq \epsilon\Big(\fint_{B_j}\big|\Tilde{u}_j\varphi_j\big|^{p\kappa}w\;dx\Big)^\frac{1}{\kappa}+C(\epsilon)r_j^{pp'}\Big(\fint_{B_j}\Big(\frac{|f|}{w}\Big)^qw\;dx\Big)^\frac{p'}{q}\Big(\frac{w(\supp(\Tilde{u}_j\varphi_j) )}{w(B_j)}\Big)^{p'(1-\frac{1}{q}-\frac{1}{p\kappa})}\nonumber\\
    &\leq \epsilon\Big(\fint_{B_j}\big|\Tilde{u}_j\varphi_j\big|^{p\kappa}w\;dx\Big)^\frac{1}{\kappa}+C(\epsilon)r_j^{pp'}\Big(\fint_{B_j}\Big(\frac{|f|}{w}\Big)^qw\;dx\Big)^\frac{p'}{q}\Big(\frac{w(\supp(\Tilde{u}_j\varphi_j) )}{w(B_j)}\Big)^{(1-\frac{1}{q})},
\end{align}
where $\epsilon>0$ is an arbitrary real number. Applying \eqref{note}, we obtain
\begin{align}\label{T21}
    \frac{w(\supp(\Tilde{u}_j\varphi_j) )}{w(B_j)}\leq \frac{1}{w(B_j)}\int_{\{B_j:\Tilde{u}_j\geq \Tilde{k}_j\}}w\;dx \leq 
    \frac{2^{(j+2)p}}{k^{p}}\fint_{B_j}u_j^pw\;dx.
\end{align}
Utilizing \eqref{T21} into (\ref{T20}) gives
\begin{align}\label{T22}
    r_j^pI_4\leq \epsilon\Big(\fint_{B_j}\big|\Tilde{u}_j\varphi_j\big|^{p\kappa}w\;dx\Big)^\frac{1}{\kappa}+C(\epsilon,p)r_j^{pp'}\Big(\frac{2^j}{k}\Big)^{p(1-\frac{1}{q})}\Big(\fint_{B_j}\Big(\frac{|f|}{w}\Big)^qw\;dx\Big)^\frac{p'}{q}\Big(\fint_{B_j}u_j^pw\;dx\Big)^{(1-\frac{1}{q})}.
\end{align}
\textbf{Estimate of $I_5$:} Utilizing the definitions of $\Tilde{u}_j$ and $\varphi_j$, we deduce
\begin{align}\label{T23}
   r_j^p I_5&\leq r_j^p\fint_{B_j}|g|~|u|^{p-1}\Tilde{u}_j\varphi_j^p \;dx\leq r_j^p\fint_{B_j}|g|~|\Tilde{u}_j+\Tilde{k}_j|^{p-1}\Tilde{u}_j\varphi_j^p \;dx\nonumber\\
    &\leq Cr_j^p\fint_{B_j}|g|~|\Tilde{u}_j|^{p}\varphi_j^p \;dx+Cr_j^p\Tilde{k}_j^{p-1}\fint_{B_j}|g|\Tilde{u}_j\varphi_j^p \;dx\nonumber\\
    &\leq Cr_j^p \fint_{B_j}|g|~|\Tilde{u}_j|^{p}\varphi_j^p \;dx+Cr_j^p\frac{\Tilde{k}_j^{p-1}}{w(B_j)}\Big(\int_{\{B_j:u\geq 2\Tilde{k}_j\}}|g|~\Tilde{u}_j\varphi_j^p \;dx+\int_{\{B_j:\Tilde{k}_j\leq u\leq 2\Tilde{k}_j\}}|g|~\Tilde{u}_j\varphi_j^p \;dx\Big)\nonumber\\
    &\leq Cr_j^p\fint_{B_j}|g|~|\Tilde{u}_j|^{p}\varphi_j^p \;dx+Cr_j^p\fint_{B_j}|g|~|\Tilde{u}_j|^{p}\varphi_j^p \;dx+ Cr_j^p\frac{\Tilde{k}_j^p}{w(B_j)}\int_{B_j\cap \supp(\Tilde{u}_j\varphi_j)}|g|~\varphi_j^p \;dx\nonumber\\
    &\leq Cr_j^p\fint_{B_j}|g|~|\Tilde{u}_j|^{p}\varphi_j^p \;dx+Cr_j^p\frac{\Tilde{k}_j^p}{w(B_j)}\int_{B_j\cap \supp(\Tilde{u}_j\varphi_j)}|g| \;dx,
\end{align}
for some $C=C(p)>0$. Due to H\"{o}lder's inequality, interpolation inequality, (\ref{conditiong}) and $q>\kappa'$, we have
\begin{align}\label{T24}
   r_j^p\fint_{B_j}|g|~|\Tilde{u}_j|^{p}\varphi_j^p \;dx &\leq r_j^p\Big(\fint_{B_j}\Big(\frac{|g|}{w}\Big)^qw\;dx\Big)^\frac{1}{q}\Big(\fint_{B_j}|\Tilde{u}_j\varphi_j|^{pq'}w\;dx\Big)^\frac{1}{q'}\nonumber\\ 
   &\leq M\Big(\fint_{B_j}|\Tilde{u}_j\varphi_j|^{p}w\;dx\Big)^\theta \Big(\fint_{B_j}|\Tilde{u}_j\varphi_j|^{p\kappa}w\;dx\Big)^\frac{1-\theta}{\kappa}\nonumber\\
   &\leq \epsilon\Big(\fint_{B_j}|\Tilde{u}_j\varphi_j|^{p\kappa}w\;dx\Big)^\frac{1}{\kappa}+C(\epsilon,M,\theta)\Big(\fint_{B_j}|\Tilde{u}_j\varphi_j|^{p}w\;dx\Big)
\end{align}
where $\theta=\frac{q\kappa-\kappa-q}{q(\kappa-1)}\in (0,1)$ and $\epsilon>0$ is an arbitrary small number. Thanks to (\ref{T21}) and (\ref{conditiong}), we get
\begin{align}\label{T25}
   \frac{r_j^p}{w(B_j)}\int_{B_j\cap \supp(\Tilde{u}_j\varphi_j)}|g| \;dx
    &\leq r_j^p\Big(\fint_{B_j}\Big(\frac{|g|}{w}\Big)^q w\;dx\Big)^\frac{1}{q}\Big(\frac{w(\supp(\Tilde{u}_j\varphi_j) )}{w(B_j)}\Big)^{(1-\frac{1}{q})}\nonumber\\
    &\leq M\Big(\frac{2^{(j+2)}}{k}\Big)^{p(1-\frac{1}{q})}  \Big(\fint_{B_j}u_j^pw\;dx\Big)^{(1-\frac{1}{q})}
\end{align}
Combining (\ref{T24}) and (\ref{T25}), estimate (\ref{T23}) yields
\begin{align}\label{T26}
    r_j^pI_5&\leq \epsilon\Big(\fint_{B_j}|\Tilde{u}_j\varphi_j|^{p\kappa}w\;dx\Big)^\frac{1}{\kappa}+C(\epsilon,M,\theta)\Big(\fint_{B_j}|\Tilde{u}_j\varphi_j|^{p}w\;dx\Big)\nonumber\\
    &+M 2^{(j+2)p(1-\frac{1}{q})}k^\frac{p}{q} \Big(\fint_{B_j}u_j^pw\;dx\Big)^{(1-\frac{1}{q})}.
\end{align}
Substituting the estimates (\ref{T14}), (\ref{T15}), \eqref{T19}, (\ref{T22}) and (\ref{T26}) into (\ref{T13}) and subsequently applying the resulting bound in (\ref{T12}), we get
\begin{align*}
    \Big(\fint_{B_{j}}|\varphi_j\Tilde{u}_{j}|^{\kappa p}w\;dx\Big)^\frac{1}{\kappa}&\leq C\Bigg\{2^{j(p+sp+np)}\Big(1+\frac{\Tail^{p-1}(u_+;x_0,\frac{r}{2})}{k^{p-1}}\Big)\fint_{B_j}u_j^pw\;dx\nonumber\\
    &+C(\epsilon,p)r_j^{pp'}\Big(\frac{2^{j+2}}{k}\Big)^{p(1-\frac{1}{q})}\Big(\fint_{B_j}\Big(\frac{|f|}{w}\Big)^qw\;dx\Big)^\frac{p'}{q}\Big(\fint_{B_j}u_j^pw\;dx\Big)^{(1-\frac{1}{q})}\nonumber\\
    &+M 2^{(j+2)p(1-\frac{1}{q})}k^\frac{p}{q} \Big(\fint_{B_j}u_j^pw\;dx\Big)^{(1-\frac{1}{q})}+C(\epsilon,M,\theta)\Big(\fint_{B_j}|\Tilde{u}_j\varphi_j|^{p}w\;dx\Big)\Bigg\}\nonumber\\
&+2C\epsilon\Big(\fint_{B_j}\big|\Tilde{u}_j\varphi_j\big|^{p\kappa}w\;dx\Big)^\frac{1}{\kappa},
\end{align*}
for some constant {$C=C(n,p,s,\Lambda,\alpha,C_1,C_2,[w]_p)>0$.}
Choosing $\epsilon=\frac{1}{4C}$ and using $\Tilde{u}_j\leq u_j$, we deduce that
\begin{align}\label{T27}
    \Big(\fint_{B_{j}}|\varphi_j\Tilde{u}_{j}|^{\kappa p}w\;dx\Big)^\frac{1}{\kappa}&\leq C2^{j(p+sp+np)}\Bigg\{\Big(1+\frac{\Tail^{p-1}(u_+;x_0,\frac{r}{2})}{k^{p-1}}\Big)\fint_{B_j}u_j^pw\;dx\nonumber\\
    &+r_j^{pp'}\Big(\frac{1}{k}\Big)^{p(1-\frac{1}{q})}\Big(\fint_{B_j}\Big(\frac{|f|}{w}\Big)^qw\;dx\Big)^\frac{p'}{q}\Big(\fint_{B_j}u_j^pw\;dx\Big)^{(1-\frac{1}{q})}\nonumber\\
    &+k^\frac{p}{q} \Big(\fint_{B_j}u_j^pw\;dx\Big)^{(1-\frac{1}{q})}\Bigg\},
\end{align}
for some constant {$C=C(n,p,s,q,\kappa,\Lambda,\alpha,C_1,C_2,[w]_p,M)>0$.} Using \eqref{T27} in \eqref{T11}, we obtain
\begin{align*}
    \Big(\fint_{B_{j+1}}u_{j+1}^p w\;dx\Big)^\frac{1}{\kappa}&\leq C\frac{2^{j(2p+sp+np)}}{k^{p(1-\frac{1}{\kappa})}}\Bigg\{\Big(1+\frac{\Tail^{p-1}(u_+;x_0,\frac{r}{2})}{k^{p-1}}\Big)\fint_{B_j}u_j^pw\;dx\nonumber\\
    &+r_j^{pp'}\Big(\frac{1}{k}\Big)^{p(1-\frac{1}{q})}\Big(\fint_{B_j}\Big(\frac{|f|}{w}\Big)^qw\;dx\Big)^\frac{p'}{q}\Big(\fint_{B_j}u_j^pw\;dx\Big)^{(1-\frac{1}{q})}\nonumber\\
    &+k^\frac{p}{q} \Big(\fint_{B_j}u_j^pw\;dx\Big)^{(1-\frac{1}{q})}\Bigg\},
\end{align*}
for some constant {$C=C(n,p,s,q,\kappa,\Lambda,\alpha,C_1,C_2,[w]_p,M)>0$.}
For $\delta\in (0,1]$, we choose 
\begin{align}\label{K1}
    k\geq k_1:=\delta\Tail(u_+;x_0,\frac{r}{2})
\end{align}
and deduce
\begin{align}\label{T28}
    \Big(\fint_{B_{j+1}}u_{j+1}^p w\;dx\Big)^\frac{1}{\kappa}&\leq C\frac{2^{j(2p+sp+np)}}{k^{p(1-\frac{1}{\kappa})}}\Bigg\{\delta^{1-p}\fint_{B_j}u_j^pw\;dx\nonumber\\
    &+r_j^{pp'}\Big(\frac{1}{k}\Big)^{p(1-\frac{1}{q})}\Big(\fint_{B_j}\Big(\frac{|f|}{w}\Big)^qw\;dx\Big)^\frac{p'}{q}\Big(\fint_{B_j}u_j^pw\;dx\Big)^{(1-\frac{1}{q})}\nonumber\\
    &+k^\frac{p}{q} \Big(\fint_{B_j}u_j^pw\;dx\Big)^{(1-\frac{1}{q})}\Bigg\},
\end{align}
for some constant {$C=C(n,p,s,q,\kappa,\Lambda,\alpha,C_1,C_2,[w]_p,M)>0$.}
We define a sequence {$\{A_j\}_{j=0}^{\infty}$} such that 
$$
A_j=k^{-p}\fint_{B_j}u_j^pw\;dx.
$$
and observe that 
\begin{align}\label{T29}
    A_j\leq k^{-p}\frac{w(B_0)}{w(B_j)}\fint_{B_0}u_0^pw\;dx&\leq k^{-p}\frac{w(B_r(x_0))}{w(B_\frac{r}{2}(x_0))}\fint_{B_r(x_0)}{u_+}^pw\;dx\nonumber\\
    &\leq C(n,p,[w]_p)k^{-p}\fint_{B_r(x_0)}{u_+}^pw\;dx.
\end{align}
We further choose 
\begin{align}\label{K2}
    k\geq k_2:=\Big(C(n,p,[w]_p)\fint_{B_r(x_0)}{(u_+)}^pw\;dx\Big)^\frac{1}{p}
\end{align}
to ensure that $A_j\leq 1$ {for every $j\in\mathbb{N}\cup\{0\}$}. Consequently, (\ref{T28}) reveals
\begin{align*}
    \Big(k^pA_{j+1}\Big)^\frac{1}{\kappa}&\leq C\frac{2^{j(2p+sp+np)}}{k^{p(1-\frac{1}{\kappa})}}\Bigg\{\delta^{1-p}k^pA_j+r_j^{pp'}\Big(\fint_{B_j}\Big(\frac{|f|}{w}\Big)^qw\;dx\Big)^\frac{p'}{q}A_j^{(1-\frac{1}{q})}+k^p A_j^{(1-\frac{1}{q})}\Bigg\}\nonumber\\
    &\leq C\frac{2^{j(2p+sp+np)}}{k^{p(1-\frac{1}{\kappa})}}\Bigg\{\delta^{1-p}k^p+r_j^{pp'}\Big(\fint_{B_j}\Big(\frac{|f|}{w}\Big)^qw\;dx\Big)^\frac{p'}{q}+k^p\Bigg\}A_j^{(1-\frac{1}{q})},
\end{align*}
which yields
\begin{align*}
    A_{j+1}\leq \big(C2^{j(2p+sp+np)}\big)^\kappa\Bigg\{ \delta^{1-p}+\frac{r^{pp'}}{k^p}\Big(\fint_{B_r(x_0)}\Big(\frac{|f|}{w}\Big)^qw\;dx\Big)^\frac{p'}{q}+1 \Bigg\}^\kappa A_j^{\kappa(1-\frac{1}{q})},
\end{align*}
for some constant {$C=C(n,p,s,q,\kappa,\Lambda,\alpha,C_1,C_2,[w]_p,M)>0$,} where we have used Lemma \ref{comlem}, and the fact $r_j\leq r$. Whenever 
\begin{align}\label{K3}
    k\geq k_3:=r^{p'}\Big(\fint_{B_r}\Big(\frac{|f|}{w}\Big)^qw\;dx\Big)^\frac{1}{q(p-1)},
\end{align}
we have
\begin{align}\label{T30}
     A_{j+1}\leq C2^{j(2p+sp+np)\kappa} \delta^{(1-p)\kappa} A_j^{\kappa(1-\frac{1}{q})},
\end{align}
for some constant {$C=C(n,p,s,q,\kappa,\Lambda,\alpha,C_1,C_2,[w]_p,M)>0$}. Therefore, $\{A_j\}_{j=0}^{\infty}$ satisfies the assumption of Lemma \ref{seqconv} with $a=C\delta^{(1-p)\kappa},\;b=2^{(2p+sp+np)\kappa}>1$ and $\beta=\kappa\big(1-\frac{1}{q}\big)-1>0$. Thus, $A_j\to 0$ as $j\to\infty$, whenever $$A_0=k^{-p}\fint_{B_r(x_0)}{u_+}^pw\;dx\leq (C\delta^{(1-p)\kappa})^{-\frac{1}{\beta}}2^{-\frac{(2p+sp+np)\kappa}{\beta^2}},$$
that is, 
\begin{align}\label{K4}
    k\geq k_4:=C^{\frac{1}{p\beta}} \delta^{\frac{-(p-1)}{p}\frac{\kappa q'}{\kappa-q'} }2^{\frac{(2p+sp+np)\kappa}{p\beta^2}}\Big(\fint_{B_r}{u_+}^pw\;dx\Big)^\frac{1}{p},
\end{align}
{where $C=C(n,p,s,q,\kappa,\Lambda,\alpha,C_1,C_2,[w]_p,M)>0$} is some constant.
 For $\mu>0$, we finally choose 
$k=k_1+k_2+k_3+k_4+\mu>0$, ensuring (\ref{K1}), (\ref{K2}), (\ref{K3}) and (\ref{K4}), and hence $A_j\to 0$ as $j\to\infty$. Consequently, $u\leq k_1+k_2+k_3+k_4+\mu$ in $B_\frac{r}{2}(x_0)$. Taking $\mu\to 0$, we obtain our desired result. 
\end{proof}

\subsection{H\"older continuity}
\begin{proof}[Proof of Theorem \ref{cty}]
Since $u$ is a weak solution of (\ref{ME}) with $g\equiv 0$, by Lemma \ref{Subsolution} and Remark \ref{rmksub}, we conclude that $u_+$ and $u_-$ are weak subsolutions of 
\begin{align*}
    \mathcal{M}_\alpha\,u=|f|\text{ in }\Omega.
\end{align*}
Let $B_r(x_0)\Subset\Omega$, {for some $0<r\leq 1$.} Using Theorem \ref{Bdd} with $\delta=1$, there exists a constant $C=C(n,p,s,q,\kappa,\Lambda,\alpha,$ $C_1,C_2,[w]_p,M)>0$ such that $\sup_{B_\frac{r}{2}(x_0)}u_+$ and $\sup_{B_\frac{r}{2}(x_0)}u_-$ are bounded by the following quantity:
\begin{align*}
    C\Big(\fint_{B_r(x_0)}|u|^pw\;dx\Big)^\frac{1}{p}+\mathrm{Tail}\Big(u;x_0,\frac{r}{2}\Big)+r^{p'}\Big(\fint_{B_r}\Big(\frac{|f|}{w}\Big)^qw\;dx\Big)^\frac{1}{q(p-1)}.
\end{align*}
Therefore, 
    \begin{align}\label{basecase}
        \mathrm{ osc }_{B_\frac{r}{2}(x_0)} u\leq \tau_0:=2\Big\{C\Big(\fint_{B_r(x_0)}|u|^pw\;dx\Big)^\frac{1}{p}+\mathrm{Tail}\big(u;x_0,\frac{r}{2}\big)+r^{p'}\Big(\fint_{B_r}\Big(\frac{|f|}{w}\Big)^qw\;dx\Big)^\frac{1}{q(p-1)}\Big\}.
    \end{align}
For $i\in\mathbb{N}\cup\{0\}$, we define $r_i:=\eta^{i}\frac{r}{2}$ and $B_i:=B_{r_i}(x_0)$, where {$\eta\in(0,\frac{1}{4})$} will be determined later. In order to conclude (\ref{Holder}), it suffices to show that 
\begin{align}\label{oscdecay}
    \osc_{B_i}u\leq \eta^{\sigma i}\tau_0\quad\mbox{for all }i\in\mathbb{N}\cup\{0\},
\end{align}
for some {$\sigma\in(0,1)$} to be determined later. We shall prove this by induction. To this end, from (\ref{basecase}), we observe that (\ref{oscdecay}) holds for $i=0$. Let us assume that (\ref{oscdecay}) holds for $i=0,1,2,...,j$. {We aim to show the validity of (\ref{oscdecay}) for $i=j+1$.} To this end, we define $$\tau_i=\eta^{\sigma i}\tau_0,\; m_i=\inf_{B_i}u \mbox{ and }M_i=\sup_{B_i}u.$$ {Since $\eta\in(0,\frac{1}{4})$, one has} $B_{2r_{j+1}}(x_0)\Subset B_j$ and the following dichotomy holds:
\begin{itemize}
    \item[(a)] $w(\{{B_{2r_{j+1}}(x_0)}:u-m_j\geq \frac{\tau_j}{2}\})\geq \frac{1}{2}w({B_{2r_{j+1}}(x_0)})$.
    \item[(b)] $w(\{{B_{2r_{j+1}}(x_0)}:\tau_j-(u-m_j)\geq \frac{\tau_j}{2}\})\geq \frac{1}{2}w({B_{2r_{j+1}}(x_0)})$.
\end{itemize}
We define $$u_j=\begin{cases}
    u-m_j&\mbox{ if (a)} \mbox{ holds},\\
    \tau_j-(u-m_j)&\mbox{ if (b)} \mbox{ holds, }
\end{cases}$$
which is nonnegative in $B_j$. {We notice that $u_j$ is a weak supersolution of
$$
\mathcal{M}_\alpha\,v=\mathcal{F}(x,v+m_j)\text{ if (a) holds},
$$
and
$$
\mathcal{M}_\alpha\,v=-\mathcal{F}(x,\tau_j+m_j-v)\text{ if (b) holds}.
$$
}
Therefore, using Lemma \ref{expansionofpositivity} with $R=r_j,r=r_{j+1},\;k=\frac{\tau_j}{2},\tau=\frac{1}{2}$, there exists a constant $\zeta=\zeta(n,p,s,q,\kappa,\Lambda,\alpha,C_1,C_2,[w]_p,\tau)\in (0,\frac{1}{4})$ such that 
\begin{align}\label{inf}
    \inf_{B_{j+1}}u_j\geq \frac{\zeta\tau_j}{2}-d
\end{align}
with$$d:=\Big(\underbrace{\frac{r_{j+1}}{r_j}\Big)^{p'}\Tail({(u_j)}_-;x_0,r_j)}_{d_1}+\underbrace{r_{j+1}^{p'}\Big(\fint_{B_{2r_{j+1}}(x_0)}\Big(\frac{|f|}{w}\Big)^qw\;dx\Big)^\frac{1}{q(p-1)}}_{d_2}.$$
Note that $|u_j|\leq 2\tau_i$ in $B_i$ for $i=0,1,2,...,j$ and $|u_j|\leq |u|+3\tau_0$ in $\R^n\setminus B_0$. Using these facts together with Lemma \ref{intlem}, we deduce 
\begin{align*}
    d_1^{p-1}&=\eta^p \Tail^{p-1}({(u_j)}_-;x_0,r_j)=\eta^p r_j^p\int_{\R^n\setminus B_j}\frac{|{(u_j)}_-(x)|^{p-1}}{|x-x_0|^{sp}}\frac{w(x)}{w(B_{x,x_0})}\;dx\nonumber\\
    &\leq \eta^pr_j^p\Big(\int_{\R^n\setminus B_0}\frac{(|u(x)|+3\tau_0)^{p-1}}{|x-x_0|^{sp}}\frac{w(x)}{w(B_{x,x_0})}\;dx+\sum_{i=0}^{j-1}\int_{B_i\setminus B_{i+1}}\frac{|u_j(x)|^{p-1}}{|x-x_0|^{sp}}\frac{w(x)}{w(B_{x,x_0})}\;dx\Big)\nonumber\\
    &\leq C\eta^pr_j^p\Big(\int_{\R^n\setminus B_0}\frac{|u(x)|^{p-1}}{|x-x_0|^{sp}}\frac{w(x)}{w(B_{x,x_0})}\;dx+(3\tau_0)^{p-1} \int_{\R^n\setminus B_0}\frac{1}{|x-x_0|^{sp}}\frac{w(x)}{w(B_{x,x_0})}\;dx\nonumber\\
    &+\sum_{i=0}^{j-1}\int_{\R^n\setminus B_{i+1}}\frac{(2\tau_i)^{p-1}}{|x-x_0|^{sp}}\frac{w(x)}{w(B_{x,x_0})}\;dx\Big)\nonumber\\
    &\leq C\eta^pr_j^p\Big(\frac{\Tail^{p-1}(u;x_0,r_0)}{r_0^{p}}+\frac{\tau_0^{p-1}}{r_0^{sp}}+\sum_{i=0}^{j-1}\frac{\tau_i^{p-1}}{r_{i+1}^{sp}}\Big),
\end{align*}
where $C=C(n,p,[w]_p)>0$. Since {$\tau_0>\Tail(u;x_0,r_0)$} and $r_1\leq r_0\leq 1$, the above estimate reveals 
\begin{align}\label{MC1}
    d_1^{p-1}\leq C\eta^pr_j^{p}\sum_{i=0}^{j-1}\frac{\tau_i^{p-1}}{r_{i+1}^{p}}&=C\eta^p\tau_j^{p-1}\sum_{i=0}^{j-1}\frac{(\frac{\tau_i}{\tau_j})^{p-1}}{(\frac{r_{i+1}}{r_j})^{p}}=C\eta^p\tau_j^{p-1}\sum_{i=0}^{j-1}\frac{\eta^{\sigma(p-1)(i-j)}}{\eta^{(i-j+1)p}}\nonumber\\
    &=C\tau_j^{p-1}\sum_{i=0}^{j-1}\eta^{(\sigma (p-1)-p)(i-j)}\leq C\tau_j^{p-1}\sum_{i=0}^{\infty}\eta^{(p-\sigma (p-1))(i+1)}\nonumber\\
    &\leq C\eta^\frac{p}{2}\tau_j^{p-1}\sum_{i=0}^{\infty}\eta^{\frac{pi}{2}}=\frac{C\eta^\frac{p}{2}}{1-\eta^\frac{p}{2}}\tau_j^{p-1},
\end{align}
for every $\sigma\in(0,\frac{p}{2(p-1)})$. We choose $\eta>0$ such that 
\begin{align}\label{coneta1}
    \frac{C\eta^\frac{p}{2}}{1-\eta^\frac{p}{2}}\leq \Big(\frac{\zeta}{8}\Big)^{p-1}.
\end{align}
Thus, we get $d_1\leq \frac{\delta \tau_j}{8}.$ Now, applying Lemma \ref{comlem}, we estimate $d_2$ in the following way.
\begin{align*}
    d_2^{p-1}=r_{j+1}^p\Big(\fint_{B_{2r_{j+1}}(x_0)}\Big(\frac{|f|}{w}\Big)^qw\;dx\Big)^\frac{1}{q}&\leq \eta^{(j+1)p}r^p\Big(\frac{w(B_r(x_0))}{w(B_{2r_{j+1}}(x_0))}\fint_{B_r(x_0)}\Big(\frac{|f|}{w}\Big)^qw\;dx\Big)^\frac{1}{q}\nonumber\\
    &\leq C\eta^{(j+1)p} \Big(\frac{|B_r(x_0)|}{|B_{2r_{j+1}}(x_0)|}\Big)^\frac{p}{q} r^p\Big(\fint_{B_r(x_0)}\Big(\frac{|f|}{w}\Big)^qw\;dx\Big)^\frac{1}{q}\nonumber\\
    &\leq C\eta^{(j+1)p(1-\frac{n}{q})} r^p\Big(\fint_{B_r(x_0)}\Big(\frac{|f|}{w}\Big)^qw\;dx\Big)^\frac{1}{q},
\end{align*}
for some $C=C(n,p,[w]_p)>0$. Since 
$$
\tau_0^{p-1}\geq r^p\Big(\fint_{B_r(x_0)}\Big(\frac{|f|}{w}\Big)^qw\;dx\Big)^\frac{1}{q}
$$ and $q>n$, we have 
\begin{align}\label{d2}
    d_2^{p-1}\leq C\eta^{(j+1)p(1-\frac{n}{q})}\tau_0^{p-1}&\leq C\eta^{(j+1)p(1-\frac{n}{q})}\Big(\frac{\tau_0}{\tau_j}\Big)^{p-1}\tau_j^{p-1}\nonumber\\
    &\leq C\eta^{(j+1)p(1-\frac{n}{q})-\sigma j(p-1)}\tau_j^{p-1}\nonumber\\
    &\leq C\eta^{j\{p(1-\frac{n}{q})-\sigma(p-1)\}+p(1-\frac{n}{q})}\tau_j^{p-1}\nonumber\\
    &\leq C\eta^{p(1-\frac{n}{q})}\tau_j^{p-1}
\end{align}
for every $\sigma\in(0,\frac{p}{2(p-1)}(1-\frac{n}{q}))$. We further choose $\eta>0$ such that 
\begin{align}\label{coneta2}
    C\eta^{p(1-\frac{n}{q})}\leq \Big(\frac{\zeta}{8}\Big)^{p-1}
\end{align}
and hence (\ref{d2}) yields $d_2\leq \Big(\frac{\zeta}{8}\Big)\tau_j$. Consequently, $d\leq d_1+d_2\leq \Big(\frac{\zeta}{8}\Big)\tau_j+\Big(\frac{\zeta}{8}\Big)\tau_j\leq \Big(\frac{\zeta}{4}\Big)\tau_j$. Using this {upper} bound of $d$, (\ref{inf}) leads to the conclusion that 
\begin{align}\label{inf1}
    \inf_{B_{j+1}}u_j\geq \frac{\zeta\tau_j}{4}.
\end{align}
If (a) holds then $m_{j+1}-m_j\geq \frac{\zeta\tau_j}{4}$. Consequently, $M_{j+1}-m_{j+1}\leq M_j-m_j+m_j-m_{j+1}\leq \tau_j-\frac{\zeta\tau_j}{4}=(1-\frac{\zeta}{4})\tau_j$.
Similarly, if (b) holds then $M_{j+1}-m_j\leq \tau_j-\frac{\zeta\tau_j}{4}$ and hence $M_{j+1}-m_{j+1}\leq (1-\frac{\zeta}{4})\tau_j$. {Now, we fix $\eta\in (0,\frac{1}{4})$ such that (\ref{coneta1}) and (\ref{coneta2}) are satisfied. Since $\eta^t\to 1$ as $t\to 0$, we choose $\sigma\in (0,\min\{1,\frac{p}{2(p-1)}(1-\frac{n}{q})\})$ such that $(1-\frac{\zeta}{4})\leq \eta^\sigma$.} Thus, we get
\begin{align*}
    \osc_{B_{j+1}}\,{u}\leq \eta^\sigma\tau_j=\tau_{j+1}.
\end{align*}
Hence, by induction, \eqref{oscdecay} is satisfied.
\end{proof}

\subsection{Harnack Inequalities}
\subsubsection{Preliminary version of weak Harnack inequality}
\begin{Lemma}\label{WHI0}
    Suppose $\mathcal{F}$ satisfies \eqref{F} where $\frac{f}{w},\; \frac{g}{w} \in L^q_{\mathrm{loc}}(\Omega,w)$ with $q>n$ and $g$ satisfies \eqref{conditiong}. Let $u$ be a weak supersolution of \eqref{ME} such that $u\geq 0$ in $B_R(x_0)\Subset\Omega$. Then there exist constants $p_0=p_0(n,p,s,q,\kappa,\Lambda,\alpha,C_1,C_2,[w]_p,M)>0$ and $C=C(n,p,s,q,\kappa,\Lambda,\alpha,C_1,C_2,[w]_p,\\M)>0$, such that
\begin{align*}
\Big(\fint_{B_r(x_0)}u^{p_0}w\;dx\Big)^\frac{1}{p_0}\leq C\Big\{\inf_{B_r(x_0)}u+\Big(\frac{r}{R}\Big)^{p'}\Tail(u_-;x_0,R)+r^{p'}\Big(\fint_{B_{8r}(x_0)}\Big(\frac{|f|}{w}\Big)^qw\;dx\Big)^\frac{1}{q(p-1)}\Big\},
\end{align*}
for every $r\in(0,1]$ with $r<\frac{R}{16}$.
\end{Lemma}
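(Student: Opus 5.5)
The plan is to follow the Krylov--Safonov / DiBenedetto--Trudinger route: combine the expansion of positivity in Lemma \ref{expansionofpositivity} with the covering Lemma \ref{covering} for the doubling measure $w$ (Lemma \ref{doubling}). The goal is a power-decay estimate for the distribution function of $u$ in $B_r(x_0)$, measured with respect to $w$. Set
\[
d:=\Big(\frac{r}{R}\Big)^{p'}\Tail(u_-;x_0,R)+r^{p'}\Big(\fint_{B_{8r}(x_0)}\Big(\frac{|f|}{w}\Big)^qw\,dx\Big)^{\frac{1}{q(p-1)}},
\]
and for $t>0$ let $A_t:=\{x\in B_r(x_0):u(x)>t\}$. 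The central claim is the following: there exist $\zeta_0\in(0,1)$ and $C\ge1$, depending only on the data, such that for every $t>0$ and every $i\in\mathbb{N}$,
\[
\frac{w(A_t)}{w(B_r(x_0))}>\delta^i \quad\Longrightarrow\quad \inf_{B_r(x_0)}u\ \ge\ \zeta_0^i\,t-C d,
\]
where $\delta\in(0,1)$ is fixed small.

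The claim is proved by induction on $i$. For $i=1$ (and more generally at any scale), I will take a ball $B_{3\rho}(x)$ with $x\in B_r(x_0)$ and $\rho\le 2r/3$ on which $w(A_t\cap B_{3\rho}(x))>\delta\,w(B_\rho(x))$. By doubling, $w(A_t\cap B_{3\rho}(x))\ge c\,\delta\,w(B_{3\rho}(x))$. I then apply Lemma \ref{expansionofpositivity} centred at $x$, with radii comparable to $\rho$ and $k=t$. This gives $u\ge \zeta t - d_x$ on a smaller ball, where $d_x$ is the analogous quantity relative to a ball $B_{R'}(x)$ with $R'\sim R$ and $B_{R'}(x)\subset B_R(x_0)$. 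The hypothesis $r<R/16$ is what leaves room to re-centre at $x$ and to use $B_{8r}(x_0)$.

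To control $d_x$ by $C d$, I will compare tails exactly as in \eqref{T17}--\eqref{T18}, using Lemma \ref{comlem}. For the $f$-term, since $q>n$ and Lemma \ref{comlem} gives $w(B_{8r})/w(B_{\rho})\le C(r/\rho)^{np}$, I get
\[
\rho^{p'}\Big(\fint_{B_{2\rho}(x)}\Big(\frac{|f|}{w}\Big)^q w\,dx\Big)^{\frac{1}{q(p-1)}}\le C\Big(\frac{\rho}{r}\Big)^{\frac{p}{p-1}(1-\frac{np}{qp})}\,r^{p'}\big(\cdots\big)^{\frac{1}{q(p-1)}}\le C d.
\]
A repeated application of the expansion of positivity then spreads positivity from a small ball to all of $B_r(x_0)$, at the cost of a geometric factor. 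For the inductive step, Lemma \ref{covering} gives either $E_\delta=B_r(x_0)$ or $w(E_\delta)\ge w(A_t)/(\delta c_w)>\delta^{i-1}w(B_r)$. On $E_\delta$ the expansion yields $u>\zeta t - Cd$, that is, $E_\delta\subset A_{\zeta t-Cd}$ up to a null set. Applying the induction hypothesis to the level $\zeta t - Cd$ closes the induction, with the constants adjusted by absorbing $Cd$ geometrically.

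With the claim in hand, I will choose $i$ so that $\delta^{i}\sim w(A_t)/w(B_r)$. This gives the decay
\[
\frac{w(A_t)}{w(B_r)}\le C\Big(\frac{\inf_{B_r}u+Cd}{t}\Big)^{\varepsilon}, \qquad \varepsilon=\frac{\log\delta}{\log\zeta_0}.
\]
Taking $p_0<\varepsilon$ and integrating via the layer-cake formula $\fint u^{p_0}w=p_0\int_0^\infty t^{p_0-1}\,w(A_t)/w(B_r)\,dt$, split at $t=\inf u+Cd$, yields the stated inequality.

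The main obstacle I expect is the uniform control of the re-centred tail and the $f$-term at all small scales $\rho$ inside the covering. The doubling and $A_p$ comparison (Lemma \ref{comlem}) must produce constants independent of $\rho$, and the requirement $q>n$ is exactly what makes the $f$-contribution scale-invariant.
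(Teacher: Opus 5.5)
Your route is the paper's: the Krylov--Safonov covering (Lemma \ref{covering}) plus expansion of positivity (Lemma \ref{expansionofpositivity}) give a power decay of $w(\{u>t\}\cap B_r(x_0))/w(B_r(x_0))$, and the layer-cake formula with $p_0$ below the decay exponent finishes. Your ``absorb $Cd$ geometrically'' bookkeeping is equivalent to the paper's nested sets $A^i_t=\{u>t\zeta^i-\frac{d}{1-\zeta}\}$ with $[A^{i-1}_t]_\epsilon\subset A^i_t$. Two steps, however, do not close as written.

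\textbf{Measure growth.} From $w(A_t)>\delta^i w(B_r(x_0))$, Lemma \ref{covering} only gives
$w(E_\delta)\ge w(A_t)/(\delta c_w)>\delta^{i-1}w(B_r(x_0))/c_w$.
Since $c_w>1$, this does not reach the induction hypothesis at index $i-1$. You must index the thresholds by $\theta:=c_w\delta<1$: take $\delta<1/c_w$ and assume $w(A_t)>\theta^i w(B_r(x_0))$. The decay exponent is then $\log\theta/\log\zeta_0$. The paper takes $\epsilon=1/(2c_w)$, so the measure doubles at each step and the thresholds are $2^{-m}$.

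\textbf{Radius in the expansion step.} $E_\delta$ is the union of the \emph{whole} sets $B_r(x_0)\cap B_{3\rho}(x)$. So the inclusion $E_\delta\subset\{u>\zeta t-Cd\}$ requires the lower bound on all of $B_{3\rho}(x)$; a bound ``on a smaller ball'' is useless here. Your $f$-term computation uses $B_{2\rho}(x)$, i.e.\ expansion radius $\rho$, which only yields $\inf_{B_\rho(x)}u$.

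The fix, as in the paper, is to measure density on the larger ball. By Lemma \ref{comlem},
$w(A_t\cap B_{6\rho}(x))>\delta\,w(B_\rho(x))\ge\delta[w]_p^{-1}6^{-np}w(B_{6\rho}(x))$.
Then apply Lemma \ref{expansionofpositivity} with radius $3\rho$, outer radius $R/2$ and $\tau=\delta[w]_p^{-1}6^{-np}$. This is exactly where $r<R/16$ enters: $3\rho\le 2r<\frac14\cdot\frac R2$, $B_{R/2}(x)\subset B_R(x_0)$, and $B_{6\rho}(x)\subset B_{8r}(x_0)$. Your $q>n$ estimate of the $f$-term and your tail comparison go through verbatim with $B_{6\rho}(x)$ in place of $B_{2\rho}(x)$.

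With these two corrections, no separate ``spreading of positivity to all of $B_r(x_0)$'' is needed; the covering induction already does that.
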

\begin{proof}
We prove the result by using Cavalieri’s principle, {which ensures that 
\begin{align}\label{CP}
    \fint_{B_r(x_0)}u^{\Theta}w\;dx&=\Theta\int_0^\infty t^{\Theta-1}\frac{w(\{x\in B_r(x_0):u(x)>t\})}{w(B_r(x_0))}\;dt,
\end{align}
for any $\Theta>0$.}
To this end, we mainly estimate the quantity
$$
\frac{w(\{x\in B_r(x_0):u(x)\geq t\})}{w(B_r(x_0))}
$$
for every $t>0$, for which we shall use the Krylov--Safonov covering lemma stated in Lemma \ref{covering}.\\
For $t>0$ and $i\in\mathbb{N}\cup\{0\}$, we define the following set:
    $$A^i_t:=\Big\{x\in B_r(x_0): u(x)>t\zeta^i-\frac{d}{1-\zeta}\Big\},$$
    where $\zeta\in (0,\frac{1}{4})$ is the constant obtained in Lemma \ref{expansionofpositivity} with $\tau:=\frac{\epsilon}{[w]_p 6^{np}}$, where $\epsilon:=\frac{1}{2c_w}$ with $c_w$ being the doubling constant given in Lemma \ref{doubling}. Here,
$$
d:=\hat{c}\Big\{\Big(\frac{r}{R}\Big)^{p'}\Tail(u_-;x_0,R)+r^{p'}\Big(\fint_{B_{{8}r}(x_0)}\Big(\frac{|f|}{w}\Big)^qw\;dx\Big)^\frac{1}{q(p-1)}\Big\},
$$
where $\hat{c}$ will be determined below in \eqref{c}. Now, we will use Lemma \ref{covering} with $E=A_t^{i-1}$. It is clear that $A^{i-1}_t\subset A^{i}_t$ for every $i\in\mathbb{N}$. We claim that $$[A^{i-1}_t]_\epsilon\subset A^{i}_t,$$ {for every $i\in\mb{N}$,}
where
$$
[A^{i-1}_t]_\epsilon:=\cup_{0<\rho\leq \frac{2r}{3}}\Big\{B_r(x_0)\cap B_{3\rho}(x){, x\in B_r(x_0): w(A^{i-1}_t\cap B_{3\rho}(x))>\epsilon w(B_\rho(x))}\Big\}.
$$
{To this end, for $i\in\mb{N}$, let $y\in B_r(x_0)$ be such that $B_r(x_0)\cap B_{3\rho}(y)\subset [A_t^{i-1}]_{\epsilon},$ { for some $\rho\in(0,\frac{2r}{3}]$}. Then 
$$
w(A_t^{i-1}\cap B_{3\rho}(y))>\epsilon w(B_\rho(y)).
$$}
Using the above estimate along with Lemma \ref{comlem}, we obtain
\begin{align*}
\frac{w(A^{i-1}_t\cap B_{6\rho}(y))}{w(B_{6\rho}(y))}&\geq\frac{w(A^{i-1}_t\cap B_{3\rho}(y))}{w(B_{6\rho}(y))}\nonumber\geq \frac{\epsilon\,w(B_{\rho}(y))}{w(B_{6\rho}(y))}\nonumber\geq \frac{\epsilon}{[w]_p6^{np}}:=\tau.
\end{align*}
{Taking into account $0<r<\frac{R}{16}$,\;$0<\rho\leq \frac{2r}{3}$ and $y\in B_r(x_0)$, we have $B_{3\rho}(y)\subset B_{12\rho}(y)\subset B_{\frac{R}{2}}(y)\subset B_R(x_0)$ and $B_{6\rho}(y)\subset B_{8r}(x_0)$. Since $3\rho<\frac{R}{8}$ and $u\geq 0$ in $B_\frac{R}{2}(y)\Subset\Omega$,} applying Lemma \ref{expansionofpositivity}{ with $r=3\rho$, $R=\frac{R}{2}$, $k=t\zeta^{i-1}-\frac{d}{1-\zeta}$ and $\tau=\frac{\epsilon}{[w]_p 6^{np}}$}, we have 
\begin{align}\label{inf2}
    \inf_{B_{3\rho}(y)}\geq \zeta\Big(t\,\zeta^{i-1}-\frac{d}{1-\zeta}\Big)-d',
\end{align}
where 
$$
d'=\Big(\frac{6\rho }{R}\Big)^{p'}\Tail\Big(u_-;y,\frac{R}{2}\Big)+(3\rho)^{p'}\Big(\fint_{B_{6\rho}(y)}\Big(\frac{|f|}{w}\Big)^qw\;dx\Big)^\frac{1}{q(p-1)}.
$$
Since $B_{\frac{R}{2}}(y)\subset B_R(x_0)$, $B_{6\rho}(y)\subset B_{8r}(x_0)$ and $0<\rho\leq \frac{2r}{3}$, utilizing Lemma \ref{comlem}, we deduce that
\begin{align}\label{d'}
    d'&\leq C\Big\{ \big(\frac{r}{R}\big)^{p'}\Tail(u_-;x_0,R)+\rho^{p'}\Big(\frac{w(B_{8r}(x_0))}{w(B_{6\rho}(y))}\Big)^\frac{1}{q(p-1)}\Big(\fint_{B_{8r}(x_0)}\Big(\frac{|f|}{w}\Big)^qw\;dx\Big)^\frac{1}{q(p-1)}\Big\}\nonumber\\
    &\leq C\Big\{\big(\frac{r}{R}\big)^{p'}\Tail(u_-;x_0,R)+\rho^{p'}\Big(\frac{r}{\rho}\Big)^\frac{np}{q(p-1)}\Big(\fint_{B_{8r}(x_0)}\Big(\frac{f}{w}\Big)^qw\;dx\Big)^\frac{1}{q(p-1)}\Big\}\nonumber\\
    &\leq C\Big\{\big(\frac{r}{R}\big)^{p'}\Tail(u_-;x_0,R)+\rho^{(1-\frac{n}{q})\frac{p}{p-1}}r^\frac{np}{q(p-1)}\Big(\fint_{B_{8r}(x_0)}\Big(\frac{|f|}{w}\Big)^qw\;dx\Big)^\frac{1}{q(p-1)}\Big\}\nonumber\\
    &\leq C\Big\{\big(\frac{r}{R}\big)^\frac{p}{p-1}\Tail(u_-;x_0,R)+r^{p'}\Big(\fint_{B_{8r}(x_0)}\Big(\frac{|f|}{w}\Big)^qw\;dx\Big)^\frac{1}{q(p-1)}\Big\},
\end{align}
for some constant $C=C(n,p,q,[w]_p)>0$. In the last inequality, we used $q>n$ and $3\rho\leq 2r$. Now, we choose 
\begin{equation}\label{c}
\hat{c}=2C,
\end{equation}
where $C$ is given in \eqref{d'} above. Hence, we have $d'<d$. Thus, (\ref{inf2}) leads to the conclusion that 
\begin{align}
    \inf_{B_{3\rho}(y)}\,u>t\,\zeta^{i}-\frac{d}{1-\zeta},
\end{align}
Consequently, $[A^{i-1}_t]_\epsilon\subset A^i_t$. By Lemma \ref{covering}, we conclude that for each $i\in\mathbb{N}$ and every $t>0$, either $A^i_t=B_r(x_0)$ or $w(A^i_t)\geq \frac{1}{c_w\epsilon}w(A^{i-1}_t)=2w({A}^{i-1}_t)$.

Now we claim that if 
\begin{equation}\label{m}
w(A^0_t)>2^{-m}w(B_r(x_0)),
\end{equation}
holds for some $m\in\mathbb{N}$, then $A^m_t=B_r(x_0)$ and therefore
\begin{equation}\label{m1}
u(x)>t\zeta^m-\frac{d}{1-\zeta}\text{ in }B_r(x_0).
\end{equation}
To this end, we observe that if $A^i_t=B_r(x_0)$, for some $i\in\{1,2,...,m-1\}$, then $B_r(x_0)=A^{i}_t\subset A^m_t\subset B_r(x_0)$, which in turn gives $A_t^m=B_r(x_0)$. Otherwise, if {$w(A^i_t)\geq 2w(A^{i-1}_t)$} for some $i\in\{1,2,...,m-1\}$, then using the presumed estimate \eqref{m}, we observe that 
$$
w(A^{m}_t)\geq 2w(A^{m-1}_t)\geq...\geq 2^{m}w(A^0_t)>w(B_r(x_0)),
$$
which gives $A_t^m=B_r(x_0)$. Thus, our claim is proved. Our next claim is that for every $t>0$, 
\begin{align}\label{WH1}
    \frac{w(\{x\in B_r(x_0):u(x)>t\})}{w(B_r(x_0))}\leq\frac{w({A}^0_t)}{w(B_r(x_0))}\leq \Big(\frac{1}{\zeta t}\Big(\inf_{B_r(x_0)} u+\frac{d}{1-\zeta}\Big)\Big)^\beta,
\end{align}
with $\beta=\mathrm{log}_\zeta(\frac{1}{2})$.
If $w(A^0_t)=0$, the above conclusion \eqref{WH1} follows immediately. Suppose $w(A^0_t)>0$, then we choose the smallest $m\in\mathbb{N}$ such that \eqref{m} holds, that is 
$$
2^{-m+1}\geq\frac{w(A^0_t)}{w(B_r(x_0))}>2^{-m}.
$$ Therefore, by the claim \eqref{m1}, it follows that
\begin{align*}
    \inf_{B_r(x_0)}u\geq t\zeta\Big(\frac{w(A^0_t)}{w(B_r(x_0))}\Big)^\frac{1}{\beta}-\frac{d}{1-\zeta},
\end{align*}
which proves \eqref{WH1}. We set $p_0=\frac{\beta}{2}>0$, and {for a given $\mu>0$}, we define $a=\inf_{B_r(x_0)} u+\frac{d}{1-\zeta}+{\mu}$. {Using (\ref{CP}),} we obtain
\begin{align}\label{WH2}
   \fint_{B_r(x_0)}u^{p_0}w\;dx&=p_0\int_0^\infty t^{p_0-1}\frac{w(\{x\in B_r(x_0):u(x)>t\})}{w(B_r(x_0))}\;dt\nonumber\\
   &\leq p_0\int_0^at^{p_0-1}\;dt+p_0\Big(\frac{a}{\zeta}\Big)^\beta\int_a^\infty t^{p_0-1-\beta}\;dt\nonumber\\
   &=a^{p_0}(1+\zeta^{-2p_0})\leq 2a^{p_0}\zeta^{-2p_0},
\end{align}
where we have used \eqref{WH1}. Therefore, {letting $\mu\to 0$} in \eqref{WH2} yields
\begin{align*}
    \Big(\fint_{B_r(x_0)}u^{p_0}w\;dx\Big)^\frac{1}{p_0}&\leq C\big(\inf_{B_r(x_0)} u+d\big),
\end{align*}    
for some constant $C=C(n,p,s,q,\kappa,\Lambda,\alpha,C_1,C_2,[w]_p,M)>0$. The proof is thereby completed.
\end{proof}
\subsubsection{Harnack inequality}
\begin{proof}[Proof of Theorem \ref{Harthm}]
     Since $u$ is a weak solution of \eqref{ME}, it is a weak subsolution as well as a weak supersolution of \eqref{ME}.  Applying Theorem \ref{Bdd} and Lemma \ref{Tailestimate}, there exists a constant $C=C(n,p,s,q,\kappa,{\lambda},\Lambda,\alpha,C_1,C_2,[w]_p,M)>0$ such that for every $0<\rho<r$, we have
\begin{align*}
    \sup_{B_\frac{\rho}{2}(x_0)}u\leq &C\delta^{-\frac{p-1}{p}\frac{\kappa q'}{\kappa-q'}}\Big(\fint_{B_\rho(x_0)}u^pw\;dx\Big)^\frac{1}{p}+\delta \mathrm{Tail}\Big(u_+;x_0,\frac{\rho}{2}\Big)\nonumber\\
    &+\rho^{p'}\Big(\fint_{B_\rho(x_0)}\Big(\frac{|f|}{w}\Big)^qw\;dx\Big)^\frac{1}{q(p-1)}\nonumber\\
    &\leq C\delta^{-\frac{p-1}{p}\frac{\kappa q'}{\kappa-q'}}\Big(\fint_{B_\rho(x_0)}u^pw\;dx\Big)^\frac{1}{p}
    +\rho^{{p'}}\big(\fint_{B_\rho(x_0)}\Big(\frac{|f|}{w}\Big)^qw\;dx\big)^\frac{1}{q(p-1)}\nonumber\\
    &+C\delta \Big(\sup_{B_\frac{\rho}{2}(x_0)}u+\Big(\frac{\rho}{R}\Big)^{p'}\Tail(u_-;x_0,R)+\rho^{p'}\Big(\fint_{B_\rho(x_0)}\Big(\frac{|f|}{w}\Big)^qw\;dx\Big)^\frac{1}{q(p-1)}\Big),
\end{align*}
for every $\delta\in(0,1]$. We choose $\frac{1}{2}\leq \sigma_1<\sigma_2\leq 1$ and $\rho=(\sigma_2-\sigma_1)r$. For $y\in B_{\sigma_1 r}(x_0)$, taking Lemma \ref{comlem} into account, we have 
\begin{align}\label{cov1}
    \sup_{B_\frac{\rho}{2}(y)}u&\leq C\delta^{-\frac{p-1}{p}\frac{\kappa q'}{\kappa-q'}}\Big(\fint_{B_\rho(y)}u^pw\;dx\Big)^\frac{1}{p}
    +(C\delta+1)\rho^{p'}\Big(\fint_{B_\rho(y)}\Big(\frac{|f|}{w}\Big)^qw\;dx\Big)^\frac{1}{q(p-1)}\nonumber\\
    &+C\delta \Big(\sup_{B_\frac{\rho}{2}(y)}u+\Big(\frac{\rho}{R}\Big)^{p'}\Tail(u_-;x_0,R)\Big)\nonumber\\
    &\leq C\delta^{-\frac{p-1}{p}\frac{\kappa q'}{\kappa-q'}}\frac{1}{(\sigma_2-\sigma_1)^n}\Big(\fint_{B_{\sigma_2r}(x_0)}u^pw\;dx\Big)^\frac{1}{p}\nonumber\\
    &+(C\delta+1)\frac{\rho^{{p'}}}{(\sigma_2-\sigma_1)^\frac{n{p'}}{q}}\Big(\fint_{B_r(x_0)}\Big(\frac{|f|}{w}\Big)^qw\;dx\Big)^\frac{1}{q(p-1)}\nonumber\\
    &+C\delta \Big(\sup_{B_\frac{\rho}{2}(y)}u+\Big(\frac{\rho}{R}\Big)^{p'}\Tail(u_-;x_0,R)\Big),
\end{align}
where we have also used the fact that $u\geq 0$ in $B_R(x_0)$. By a covering argument, from \eqref{cov1}, we deduce
\begin{align}\label{coveringargment}
    \sup_{B_{\sigma_1r}(x_0)}u&\leq C\delta^{-\frac{p-1}{p}\frac{\kappa q'}{\kappa-q'}}\frac{1}{(\sigma_2-\sigma_1)^n}\Big(\fint_{B_{\sigma_2r}(x_0)}u^pw\;dx\Big)^\frac{1}{p}\nonumber\\
    &+(C\delta+1)r^{p'}(\sigma_2-\sigma_1)^{(1-\frac{n}{q})p'}\big(\fint_{B_r(x_0)}\Big(\frac{|f|}{w}\Big)^qw\;dx\big)^\frac{1}{q(p-1)}\nonumber\\
    &+C\delta \Big(\sup_{B_{\sigma_2r}(x_0)}u+\Big(\frac{r}{R}\Big)^{p'}\Tail(u_-;x_0,R)\Big)\nonumber\\
    &\leq C\delta^{-\frac{p-1}{p}\frac{\kappa q'}{\kappa-q'}}\frac{1}{(\sigma_2-\sigma_1)^n}\Big(\fint_{B_{\sigma_2r}(x_0)}u^pw\;dx\Big)^\frac{1}{p}\nonumber\\&+(C\delta+1)r^{p'}\big(\fint_{B_r(x_0)}\Big(\frac{|f|}{w}\Big)^qw\;dx\big)^\frac{1}{q(p-1)}\nonumber\\
    &+C\delta \Big(\sup_{B_{\sigma_2r}(x_0)}u+\Big(\frac{r}{R}\Big)^{p'}\Tail(u_-;x_0,R)\Big),
\end{align}
where $q>n$ has been used. For $0<t<p$, choosing $\delta=\frac{1}{4C}$ and using Young's inequality in \eqref{coveringargment} yields
\begin{align}\label{Har1}
    \sup_{B_{\sigma_1r}(x_0)}u&\leq \frac{1}{2}\sup_{B_{\sigma_2 r}(x_0)} u+\frac{C}{(\sigma_2-\sigma_1)^\frac{np}{t}}\Big(\fint_{B_{\sigma_2r}(x_0)}u^tw\;dx\Big)^\frac{1}{t}\nonumber\\
    &+Cr^{p'}\Big(\fint_{B_r(x_0)}\Big(\frac{|f|}{w}\Big)^qw\;dx\Big)^\frac{1}{q(p-1)}+C\Big(\frac{r}{R}\Big)^{p'}\Tail(u_-;x_0,R),
\end{align}
for some constant $C=C(n,p,s,q,\kappa,{\lambda},\Lambda,\alpha,C_1,C_2,[w]_p,M,t)>0$. Moreover, using H\"{o}lder's inequality, one can easily obtain \eqref{Har1} from \eqref{coveringargment} for any $t\geq p$. Thus, (\ref{Har1}) is valid for every $t>0$.
Using Lemma \ref{iterativelem}, we obtain
\begin{align}\label{MC2}
     \sup_{B_{\frac{r}{2}}(x_0)}u&\leq C\Big\{\Big(\fint_{B_{r}(x_0)}u^tw\;dx\Big)^\frac{1}{t}+\Big(\frac{r}{R}\Big)^{p'}\Tail(u_-;x_0,R)+r^{p'}\Big(\fint_{B_r(x_0)}\Big(\frac{|f|}{w}\Big)^qw\;dx\Big)^\frac{1}{q(p-1)}\Big\},
\end{align}
for some constant $C=C(n,p,s,q,\kappa,{\lambda},\Lambda,\alpha,C_1,C_2,[w]_p,M,t)>0$. In particular, choosing $t=p_0$ in \eqref{MC2}, and applying Lemma \ref{WHI0}, we obtain 
\begin{align*}
    \sup_{B_{\frac{r}{2}}(x_0)}u\leq C\Big\{\inf_{B_r(x_0)}u+\Big(\frac{r}{R}\Big)^{p'}\Tail(u_-;x_0,R)+r^{p'}\Big(\fint_{B_{8r}(x_0)}\Big(\frac{|f|}{w}\Big)^qw\;dx\Big)^\frac{1}{q(p-1)}\Big\},
\end{align*}
for some constant $C=C(n,p,s,q,\kappa,{\lambda},\Lambda,\alpha,C_1,C_2,[w]_p,M)>0$.
\end{proof}
\subsubsection{Weak Harnack inequality}
\begin{proof}[Proof of Theorem \ref{wkHarnack}]
    We begin with the observation that for $0<l\leq p_0$, H\"{o}lder's inequality together with Lemma \ref{WHI0} yields the estimate (\ref{FWH}). Thus, it suffices to prove (\ref{FWH}) for $p_0<l<\kappa(p-1)$. In this regard, our objective is to obtain a reverse H\"{o}lder estimate by using Moser's iteration technique. To this end, we assume $\frac{1}{2}\leq \tau_1\leq \tau_2\leq \frac{3}{4}$, $\Tilde{\tau}=\frac{\tau_1+\tau_2}{2}$ and $\varphi\in C_c^\infty(B_{\Tilde{\tau}r}(x_0))$ is a nonnegative function such that $0\leq \varphi\leq 1$ in $B_{\Tilde{\tau}r}(x_0)$, $\varphi\equiv 1$ in $B_{\tau_1r}(x_0)$ and $|\nabla\varphi|\leq \frac{8}{(\tau_2-\tau_1)r}$ in $B_{\Tilde{\tau}r}(x_0)$. For $t>0$, we denote by $v=(u+t)^\frac{p-\eta}{p}$. Then for any $1<\eta<p$, using Lemma \ref{EWH}, we get
    \begin{align}\label{SB2}
       \fint_{B_{\tau_2r}(x_0)}|\nabla (\varphi v)|^pw\;dx&\leq  2^{p-1}\Big(\fint_{B_{\tau_2r}(x_0)}v^p|\nabla \varphi|^pw\;dx+\fint_{B_{\tau_2r}(x_0)}\varphi^p|\nabla v|^pw\;dx\Big)\nonumber\\
       & \leq C(I_1+I_2+I_3+I_4+I_5),
    \end{align}
for some constant $C=C(n,p,\Lambda,\alpha,C_1,C_2,[w]_p)>0$. {Here,}
$$I_1:=\frac{(p-\eta)^p}{(\eta-1)^p}\fint_{B_{\tau_2r}(x_0)}v^p|\nabla\varphi|^pw\;dx,$$
$$I_2:=\frac{(p-\eta)^p}{(\eta-1)^p}\fint_{B_{\tau_2r}(x_0)}\int_{B_{\tau_2r}(x_0)} \frac{\max\{v(x),v(y)\}^p|\varphi(x)- \varphi(y)|^p}{|x-y|^{sp}}K(x,y)\;dx\;dy,$$
\begin{align*}
    I_3:=&\frac{(p-\eta)^p}{(\eta-1)}\Big(\sup_{x\in\supp\,\varphi}\int_{\R^n\setminus B_{\tau_2r}(x_0)}\frac{1}{|x-y|^{sp}}\frac{w(y)}{w(B_{x,y})}\;dy\nonumber+t^{1-p}R^{-p}\Tail(u_-;x_0,R)\Big)\\
    &\qquad\times\fint_{B_{\tau_2r}(x_0)}\varphi^pv^pw\;dx,
\end{align*}
$$I_4:=\fint_{B_{{\tau_2r}}(x_0)}|g|u^{p-1}\varphi^p(u+t)^{1-\eta}\;dx,\quad\mbox{ and }\quad I_5:=\fint_{B_{\tau_2r}(x_0)}|f|\varphi^p(u+t)^{1-\eta}\;dx.$$
Now we estimate each of the terms $I_i$. To this end, we suppose $1+\epsilon\leq\eta<p$. \\
\textbf{Estimate of $I_1$:}
Using the properties of $\varphi$, we have
\begin{align}\label{SB3}
    I_1\leq \frac{C}{(\tau_2-\tau_1)^pr^p}\fint_{B_{\tau_2r}(x_0)}v^pw\;dx,
\end{align}
for some constant $C=C(p,\epsilon)>0$.\\
\textbf{Estimate of $I_2$:}
Due to property \eqref{kernal} and Lemma \ref{intlem} along with $r\in(0,1]$, we obtain
\begin{align}\label{SB11}
    I_2\leq&C\fint_{B_{\tau_2r}(x_0)}\int_{B_{\tau_2r}(x_0)} \frac{v^p(y)~\|\nabla\varphi\|_{L^\infty(B_{r})}^p|x-y|^p}{|x-y|^{sp}}\frac{w(x)w(y)}{w(B_{x,y})}\;dx\;dy\nonumber\\
    &\leq \frac{C}{(\tau_2-\tau_1)^p r^{sp}}\fint_{B_{\tau_2r}(x_0)}v^p(y)w(y)~\;dy\leq \frac{C}{(\tau_2-\tau_1)^p r^{p}}\fint_{B_{\tau_2r}(x_0)}v^p(y)w(y)~\;dy,
\end{align}
for some $C=C(n,p,s,\Lambda,\epsilon,[w]_p)>0$. \\
\textbf{Estimate of $I_3$:}
In order to estimate $I_3$, we observe that for $x\in B_{\Tilde{\tau} r}(x_0),\;y\in\mathbb{R}^n\setminus B_{\tau_2 r}(x_0)$,
\begin{equation}\label{nn1}
\frac{|y-x_0|}{|y-x|}\leq 1+\frac{|x-x_0|}{|y-x|}\leq 1+\frac{\tau_2+\tau_1}{\tau_2-\tau_1}\leq \frac{2}{(\tau_2-\tau_1)}.
\end{equation}
Moreover, we observe that $B_{x,y}\cup B_{y,x_0}\subset B_{2|y-x_0|}(x_0)$. Thanks to Lemma \ref{comlem}, we have
\begin{align}\label{nn2}
    \frac{w(B_{y,x_0})}{w(B_{x,y})}\leq \frac{w(B_{y,x_0})}{w(B_{2|y-x_0|}(x_0))}\frac{{w(B_{2|y-x_0|}(x_0))}}{w(B_{x,y})}&\leq C\Big(\frac{|B_{y,x_0}|}{|B_{2|y-x_0|}(x_0)|}\Big)^\sigma\Big(\frac{{w(B_{2|y-x_0|}(x_0))}}{w(B_{x,y})}\Big)^p\nonumber\\
    &\leq C\Big(\frac{|y-x_0|}{|x-y|}\Big)^{np}\leq \frac{C}{(\tau_2-\tau_1)^{np}},
\end{align}
for some $C=C(n,p,[w]_p)>0$. Taking these two estimates \eqref{nn1} and \eqref{nn2} into account, we obtain
\begin{align}\label{SB4}
    I_3&\leq C\Big(\sup_{x\in B_{\Tilde{\tau}r}(x_0)}\int_{\R^n\setminus B_{\tau_2r}(x_0)}\frac{1}{|x-y|^{sp}}\frac{w(y)}{w(B_{x,y})}\;dy\nonumber+t^{1-p}R^{-p}\Tail^{p-1}(u_-;x_0,R)\Big)\\
    &\qquad\times\fint_{B_{\tau_2r}(x_0)}\varphi^pv^pw\;dx\nonumber\\
    &\leq \frac{C}{(\tau_2-\tau_1)^{sp+np}}\Big(\int_{\R^n\setminus B_{\tau_2r}(x_0)}\frac{1}{|x_0-y|^{sp}}\frac{w(y)}{w(B_{x_0,y})}\;dy\nonumber+t^{1-p}R^{-p}\Tail^{p-1}(u_-;x_0,R)\Big)\\
    &\qquad\times\fint_{B_{\tau_2r}(x_0)}\varphi^pv^pw\;dx\nonumber\\
    &\leq \frac{C}{\tau_2^{sp}(\tau_2-\tau_1)^{np+sp}r^{p}}\Big(1+t^{1-p}(\frac{r}{R})^p\Tail^{p-1}(u_-;x_0,R)\Big)\fint_{B_{\tau_2r}(x_0)}\varphi^pv^pw\;dx,
\end{align}
for some constant $C=C(n,p,s,\epsilon,[w]_p)>0$. For $\xi>0$, we choose 
\begin{align}\label{SB8}
    t\geq\Big(\frac{r}{R}\Big)^{{p'}}\Tail(u;x_0,R)+\xi>0
\end{align}
to ensure
\begin{align}\label{SB12}
    I_3\leq \frac{C}{r^p(\tau_2-\tau_1)^{np+sp}}\fint_{B_{\tau_2r}(x_0)}v^pw\;dx,
\end{align}
for some constant $C=C(n,p,s,\epsilon,[w]_p)>0$.\\
\textbf{Estimate of $I_4$:}
Applying H\"older's inequality and Interpolation Inequality, using \eqref{conditiong} we deduce
    \begin{align*}
        r^pI_4&\leq r^p\fint_{B_{{\tau_2r}}(x_0)}|g|u^{p-1}\varphi^p(u+t)^{1-\eta}\;dx\leq r^p\fint_{B_{{\tau_2r}}(x_0)}|g|\varphi^pv^pdx\nonumber\\
        &\leq r^p\Big(\fint_{B_{{\tau_2r}}(x_0)}\Big(\frac{|g|}{w}\Big)^q~w\;dx\Big)^\frac{1}{q}\Big(\fint_{B_{{\tau_2r}}(x_0)}|\varphi v|^{pq'}w\;dx\Big)^\frac{p}{pq'}\nonumber\\
        &\leq M\Big(\fint_{B_{\tau_2r}(x_0)}|\varphi v|^{p}~w\;dx\Big)^{(1-\theta)} \Big(\fint_{B_{\tau_2r}(x_0)}|\varphi v|^{p\kappa}~w\;dx\Big)^{\frac{\theta}{\kappa}},
    \end{align*}
    where $\theta=\frac{k'}{q}\in (0,1)$. By using Young's inequality, we get
    \begin{align}\label{SB1}
        r^pI_4\leq \gamma \Big(\fint_{B_{\tau_2r}(x_0)}|\varphi v|^{p\kappa}~w\;dx\Big)^{\frac{1}{\kappa}}+C(\gamma,n,p,q,M)\fint_{B_{\tau_2r}(x_0)}|\varphi v|^{p}~w\;dx,
    \end{align}
for every $\gamma>0$ (to be determined below in \eqref{gamma}).\\
\textbf{Estimate of $I_5$:} Assume that
\begin{align}\label{SB9}
    t>r^{p'}\Big(\fint_{B_{8r}(x_0)}\Big(\frac{|f|}{w}\Big)^qw\;dx\Big)^\frac{1}{q(p-1)}.
\end{align}
Proceeding as in (\ref{SB1}), we deduce 
\begin{align}\label{SB5}
    r^pI_5&\leq r^p\fint_{B_{\tau_2r}(x_0)}|f|\varphi^p(u+t)^{1-\eta}\;dx\leq r^pt^{1-p}\fint_{B_{\tau_2r}(x_0)}|f|\varphi^pv^p\;dx\nonumber\\
    &\leq t^{1-p}r^p\Big(\fint_{B_{{\tau_2r}}(x_0)}\Big(\frac{|f|}{w}\Big)^q~w\;dx\Big)^\frac{1}{q}\Big(\fint_{B_{{\tau_2r}}(x_0)}|\varphi v|^{pq'}w\;dx\Big)^\frac{p}{pq'}\leq C\Big(\int_{B_{{\tau_2r}}(x_0)}|\varphi v|^{pq'}w\;dx\Big)^\frac{p}{pq'}\nonumber\\
    &\leq \gamma \Big(\fint_{B_{\tau_2r}(x_0)}|\varphi v|^{p\kappa}~w\;dx\Big)^{\frac{1}{\kappa}}+C(\gamma,n,p,q)\fint_{B_{\tau_2r}(x_0)}|\varphi v|^{p}~w\;dx,
\end{align}
for every $\gamma>0$. Substituting \eqref{SB3}, \eqref{SB11}, \eqref{SB12}, \eqref{SB1},\eqref{SB5} into \eqref{SB2} and applying Theorem \ref{WIT} in the resulting estimate, we arrive at
\begin{align}\label{SB6}
    \Big(\fint_{B_{\tau_2r}(x_0)}|\varphi v|^{p\kappa}w\;dx\Big)^\frac{1}{\kappa}\leq \frac{{\hat{C}}}{(\tau_2-\tau_1)^{np+sp}}\fint_{B_{\tau_2r}(x_0)}v^pw~\;dx+2C\gamma \Big(\fint_{B_{\tau_2r}(x_0)}|\varphi v|^{p\kappa}w\;dx\Big)^\frac{1}{\kappa},
\end{align}
{for some constants $\hat{C}=\hat{C}(n,p,q,s,\Lambda,\alpha,C_1,C_2,\epsilon,\gamma,\Lambda,M,[w]_p)>0$ and $C=C(n,p,q,s,\Lambda,\alpha,\\C_1,C_2,\epsilon,\Lambda,M,[w]_p)>0$.} Choosing 
\begin{equation}\label{gamma}
2C\gamma=\frac{1}{2}
\end{equation}
into \eqref{SB6}, we deduce
\begin{align*}
    \Big(\fint_{B_{\tau_2r}(x_0)}|\varphi v|^{p\kappa}w\;dx\Big)^\frac{1}{\kappa}\leq \frac{C}{(\tau_2-\tau_1)^{np+sp}}\fint_{B_{\tau_2r}(x_0)}v^pw~\;dx,
\end{align*}
which yields
\begin{align*}
    \Big(\fint_{B_{\tau_1r}(x_0)}|v|^{p\kappa}w\;dx\Big)^\frac{1}{\kappa}\leq \frac{C}{(\tau_2-\tau_1)^{np+sp}}\fint_{B_{\tau_2r}(x_0)}v^pw~\;dx,
\end{align*}
for some constant $C=C(n,p,q,s,\Lambda,\alpha,C_1,C_2,\epsilon,M,[w]_p)>0$. Here we used the fact that $\varphi\equiv1$ in $B_{\tau_1r}(x_0)$. We choose 
\begin{align}\label{SB13}
    t=\Big(\frac{r}{R}\Big)^{{p'}}\Tail(u;x_0,R)+r^{p'}\Big(\int_{B_{8r}(x_0)}\Big(\frac{|f|}{w}\Big)^qw\;dx\Big)^\frac{1}{q(p-1)}+\xi
\end{align}
to ensure (\ref{SB8}) and (\ref{SB9}), where $\xi>0$. Set $\Tilde{u}=u+t$ and we arrived 
\begin{align}\label{SB7}
    \Big(\fint_{B_{\tau_1r}(x_0)}|\Tilde{u}|^{(p-\eta)\kappa}w\;dx\Big)^\frac{1}{(p-\eta)\kappa}\leq \Big(\frac{C}{(\tau_2-\tau_1)^{np+sp}}\Big)^\frac{1}{p-\eta}\Big(\fint_{B_{\tau_2r}(x_0)}\Tilde{u}^{p-\eta}w~\;dx\Big)^\frac{1}{p-\eta},
\end{align}
for some constant $C=C(n,p,q,s,\Lambda,\alpha,C_1,C_2,\epsilon,M,[w]_p)>0$, provided $1+\epsilon\leq \eta<p$.

Now we are ready to define an iteration scheme. To this end, for $0<p_0<l<\kappa(p-1)$, we assume that $m\in\mathbb{N}\cup\{0\}$ is such that $p_0\kappa^{m-1}<l\leq p_0\kappa^m$. Denote $$\rho:=\frac{l}{\kappa^m} \mbox{ and } \eta_i:=p-\kappa^i\rho\in (1,p),\; i=0,1,2,...,{m-1}.$$ It is clear that for $i=0,1,2,....,m-1$, {one has} $\eta_i\geq 1+\epsilon$ with $\epsilon=p-1-\frac{l}{k}>0$. Let $r_i=\tau_ir$ with $\tau_i=\frac{3}{4}-\frac{1}{4}\Big(\frac{1-2^{-i}}{1-2^{-m}}\Big)$.
Thus, $\tau_i-\tau_{i+1}=\frac{1}{4}\Big(\frac{1-2^{-i-1}}{1-2^{-m}}\Big)-\frac{1}{4}\Big(\frac{1-2^{-i}}{1-2^{-m}}\Big)=\frac{1}{4}\frac{2^{-i-1}}{1-2^{-m}}$. Putting $\eta=\eta_{m-1}$ in (\ref{SB7}) we get
\begin{align*}
    \Big(\fint_{B_{r_{m}}(x_0)}|\Tilde{u}|^{\kappa^m\rho}w\;dx\Big)^\frac{1}{\kappa^m\rho}&\leq \Big(C2^{m(np+sp)}\Big)^\frac{1}{\kappa^{m-1}\rho}\Big(\fint_{B_{r_{m-1}}(x_0)}\Tilde{u}^{\kappa^{m-1}\rho}w~\;dx\Big)^\frac{1}{\kappa^{m-1}\rho}\nonumber\\
    &\leq C^{\frac{1}{\rho}\sum_{i=0}^{m-1}\frac{1}{\kappa^i}}2^{(np+sp)\sum_{i=0}^{m}\frac{i}{\kappa^{i-1}}}\Big(\fint_{B_{r_0}(x_0)}\Tilde{u}^{\rho}w~\;dx\Big)^\frac{1}{\rho}\nonumber\\
    &\leq C\Big(\fint_{B_{r_0}(x_0)}\Tilde{u}^{p_0}w~\;dx\Big)^\frac{1}{p_0},
\end{align*}
for some constant $C=C(n,p,q,s,\Lambda,\alpha,C_1,C_2,l,[w]_p,M)>0$. Here, in the last inequality we used H\"{o}lder's inequality with exponent $\frac{p_0}{\rho}\geq 1$. Consequently,   
\begin{align}\label{SB10}
    \Big(\fint_{B_{\frac{r}{2}}(x_0)}|\Tilde{u}|^{l}w\;dx\Big)^\frac{1}{l}&\leq C\Big(\fint_{B_{\frac{3r}{4}}(x_0)}\Tilde{u}^{p_0}w~\;dx\Big)^\frac{1}{p_0},
\end{align}
for some constant $C=C(n,p,q,s,\Lambda,\alpha,C_1,C_2,l,[w]_p,M)>0$. Recalling the definition $\Tilde{u}$ into (\ref{SB10}), we have
\begin{align*}
    \Big(\fint_{B_{\frac{r}{2}}(x_0)}u^{l}~w\;dx\Big)^\frac{1}{l}&\leq C\Big(\fint_{B_{\frac{3r}{4}}(x_0)}u^{p_0}~w~\;dx\Big)^\frac{1}{p_0}+Ct,
\end{align*}
for some constant $C=C(n,p,q,s,\Lambda,\alpha,C_1,C_2,l,[w]_p,M)>0$. Using Lemma \ref{WHI0} and (\ref{SB13}), the above inequality yields
\begin{align*}
    \Big(\fint_{B_{\frac{r}{2}}(x_0)}u^{l}~w\;dx\Big)^\frac{1}{l}&\leq  C\Big\{\inf_{B_\frac{r}{2}(x_0)}u+\Big(\frac{r}{R}\Big)^\frac{p}{p-1}\Tail(u_-;x_0,R)\nonumber\\
    &+r^\frac{p}{p-1}\Big(\fint_{B_{8r}(x_0)}\Big(\frac{|f|}{w}\Big)^qw\;dx\Big)^\frac{1}{q(p-1)}\Big\}+C\xi,
\end{align*}
for every $\xi>0$, for some constant $C=C(n,p,q,s,\Lambda,\alpha,C_1,C_2,l,[w]_p,M)>0$. Taking $\xi\to 0$, we obtain our desired result.
\end{proof}
\subsection{Semicontinuity}
\begin{proof}[Proof of Theorem \ref{lscthm}]
The proof follows combining Proposition \ref{lowersemi} along with Lemma \ref{semlem}.
\end{proof}

\begin{proof}[Proof of Corollary \ref{upper}]
Since $u$ is weak subsolution of equation (\ref{ME}), $v=-u$ is a weak supersolution of 
$$
\mathcal{M}_\alpha\,v=\mathcal{G}(x,v)\text{ in }\Omega,
$$
where $\mathcal{G}(x,v):=-\mathcal{F}(x,-v)$ with $\mathcal{G}$ satisfying \eqref{F}. Moreover, $v$ is bounded below in $\R^n$. Hence, by using Theorem \ref{lscthm}, we conclude that $u$ is upper semicontinuous. This completes the proof.
\end{proof}

\section*{Acknowledgement} Prashanta Garain acknowledges the financial support of the Anusandhan National Research Foundation (ANRF), India under the ARG-MATRICS grant, File No. ANRF/ARGM/2025\\/001315/MTR.

\end{document}